\DeclareMathOperator{\argmin}{\mbox{argmin}}
\def\bn{\hfill \\ \smallskip\noindent}
\def\argmin{\mathop{\rm argmin}}
\def\vx{x}
\def\prox{\mbox{prox}}
\def\dist{\mbox{dist\,}}
\newcommand{\beq}{\begin{equation}}
\newcommand{\eeq}{\end{equation}}
\newcommand{\st}{{\rm s.t.}}
\newcommand{\cI}{\mathcal{I}}
\newcommand{\bfPsi}{{\mbox{\boldmath $\Psi$}}}
\begin{document}

\bigskip
\def\theequation {\thesection.\arabic{equation}}
\def\pn {\par\smallskip\noindent}
\def \bn {\hfill \\ \smallskip\noindent}
\newcommand{\fs}{f_1,\ldots,f_s}
\newcommand{\f}{\vec{f}}
\newcommand{\hx}{\hat{x}}
\newcommand{\hy}{\hat{y}}
\newcommand{\barhx}{\bar{{x}}}
\newcommand{\vecx}{x_1,\ldots,x_m}
\newcommand{\xoy}{x\rightarrow y}
\newcommand{\barx}{{\bar x}}
\newcommand{\bary}{{\bar y}}
\newtheorem{theorem}{Theorem}[section]
\newtheorem{lemma}{Lemma}[section]
\newtheorem{corollary}{Corollary}[section]
\newtheorem{proposition}{Proposition}[section]
\newtheorem{definition}{Definition}[section]
\newtheorem{claim}{Claim}[section]
\newtheorem{remark}{Remark}[section]

\newcommand{\newsection}{\setcounter{equation}{0}\section}

\def\br{\break}
\def\smskip{\par\vskip 5 pt}
\def\proof{\bn {\bf Proof.} }
\def\QED{\hfill{\bf Q.E.D.}\smskip}
\def\qed{\quad{\bf q.e.d.}\smskip}

\newcommand{\cM}{\mathcal{M}}
\newcommand{\cJ}{\mathcal{J}}
\newcommand{\cT}{\mathcal{T}}
\newcommand{\bx}{\boldsymbol{x}}
\newcommand{\bp}{\boldsymbol{p}}
\newcommand{\bz}{\boldsymbol{z}}

%% PUT YOUR TITLE PAGE INFORMATION HERE %%%
\begin{titlepage}
\title{\bf A Block Successive Upper Bound Minimization Method of Multipliers for Linearly Constrained Convex Optimization}
\author{Mingyi Hong\thanks{University of Minnesota, Minneapolis, USA, email: \texttt{\{mhong, meisam, luozq\}@umn.edu}.}, \quad Tsung-Hui Chang\thanks{National Taiwan University of Science and Technology, Taiwan, R.O.C., email: \texttt{tsunghui.chang@ieee.org}}, \quad Xiangfeng Wang \thanks{Nanjing University, Nanjing, P.R.~ China, email: \texttt{wangxf@smail.nju.edu.cn}}, \\Meisam Razaviyayn$^{*}$, \quad Shiqian Ma\thanks{The Chinese University of Hong Kong, Hong Kong, P.R.~ China, email: \texttt{sqma@se.cuhk.edu.hk}}, \quad Zhi-Quan Luo$^{*}$\let\thefootnote\relax\footnote{The research of M.~Hong, M.~Razaviyayn and Z.-Q.~Luo is supported in part by the AFOSR, grant number  FA9550-12-1-0340. The research of S.~ Ma is supported by the Hong Kong Research Grants Council (RGC) Early Career Scheme (ECS) (Project ID: CUHK 439513). T.-H.~ Chang is supported by the  National Science Council, Taiwan (R.O.C.), under grant NSC 102-2221-E-011-005-MY3.}
}
%\thanks{$^1 $ University of Minnesota, Minneapolis, USA;
%$^2$ National Taiwan University of Science and Technology, Taiwan, R.O.C.;
% $^3$ Nanjing University, Nanjing, P. R. China;
% $^4$ The Chinese University of Hong Kong, Hong Kong, P. R. China}}
\maketitle
\date{}
\vspace*{-1cm}

\begin{abstract}
%In this paper, we consider a (possibly nonsmooth) convex optimization problem with multiple blocks of variables and a linear constraint coupling all the variables. We propose an algorithm called {\it block coordinate descent method of multipliers} (BSUM-M) to solve this family of problems. The BSUM-M is a primal-dual type of algorithm. It integrates the traditional block coordinate decent (BCD) algorithm and the alternating direction method of multipliers (ADMM), in which it optimizes the (approximate ) augmented Lagrangian of the original problem one block variable each time, followed by a gradient update for the dual variable. The BSUM-M is more general than both the BCD and the ADMM, due to its ability to simultaneously deal with coupled objective function and linearly coupled constraints.
%
% The focus of this paper is to analyze the convergence for this family of algorithm. Under certain regularity conditions, and when the order for which the block variables are either updated in a deterministic way or in a random fashion, we show that the BSUM-M converges, under either a diminishing stepsize rule or a small enough stepsize for the dual update. We then analyze an important special case of the (randomized) BSUM-M algorithm, the well-known (randomized) BCD method, which handles a subset of our considered problem without linearly coupled constraint. We show that under certain regularity conditions, the randomized BCD method in fact also converges linearly.
%
 Consider the problem of minimizing the sum of a smooth convex function and a separable nonsmooth convex function subject to linear coupling constraints. Problems of this form arise in many contemporary applications including signal processing, wireless networking and smart grid provisioning. Motivated by the huge size of these applications, we propose a new class of first order primal-dual algorithms called the
 %{\it block coordinate descent method of multipliers} (BCDMM)
 {\it block successive upper-bound minimization method of multipliers} (BSUM-M)
 to solve this family of problems. The BSUM-M updates the primal variable blocks successively by minimizing locally tight upper-bounds of the augmented Lagrangian of the original problem, followed by a gradient type update for the dual variable in closed form. We show that under certain regularity conditions, and when the primal block variables are updated in either a deterministic or a random fashion, the BSUM-M converges to the set of optimal solutions. Moreover, in the absence of linear constraints, we show that the BSUM-M, which reduces to the \emph{block successive upper-bound minimization } (BSUM \cite{Razaviyayn12SUM}) method, is capable of linear convergence without strong convexity. %Finally, we illustrate the effectiveness of the BSUM-M using two large scale applications from basis pursuit and smart grid.
\end{abstract}

%\vspace*{\fill}
\noindent {\bf KEY WORDS:} Block successive upper-bound minimization, alternating direction method of multipliers, randomized block coordinate descent.
\pn
\noindent {\bf AMS(MOS) Subject Classifications:}  49, 90.
\end{titlepage}

%%SO MUCH FOR THE TITLE PAGE

\newsection{\bf Introduction} \label{sub:intro}

Consider the problem of minimizing a convex function $f(x)$ subject to linear equality constraints:
\begin{equation}\label{eq:1}
\begin{array}{ll}
\mbox{minimize} & \displaystyle f(x):=g\left(x_1,\cdots, x_K\right)+\sum_{k=1}^{K}h_k(x_k)\\ [10pt]
\mbox{subject to} & \displaystyle E_1x_1+E_2x_2+\cdots+E_Kx_K=q,\\
& x_k\in X_k,\quad k=1,2,...,K,
\end{array}
\end{equation}
where $g(\cdot)$ is a smooth convex function; $h_k$ is a nonsmooth convex function; $x=(x_1^T,...,x_K^T)^T\in\Re^n$ is a partition of the optimization variable $x$, $x_k\in \Re^{n_k}$; $X=\prod_{k=1}^{K}X_k$ is the
feasible set for $x$; $q\in\Re^m$ is a vector. Let $E:=(E_1,\cdots, E_K)$ and $h(x):=\sum_{k=1}^{K}h_k(x_k)$. Many contemporary problems in signal processing, machine learning and smart grid systems can be formulated in the form \eqref{eq:1}. To motivate our work,  we discuss several examples of the form \eqref{eq:1}  below.

\subsection{Motivating Examples}\label{sec:example}
The first example is the basis pursuit (BP) problem which solves the following nonsmooth problem %\cite{ChenDonohoSaunders1998}
\begin{equation}\label{eq:BP}
\displaystyle{ {\operatornamewithlimits{{\min}}_{x}}} \  \|x\|_1\quad \st \quad E x = q, \ x\in X.
\end{equation}
One important application of this model is in compressive sensing, where a sparse signal (say an image) $x$ needs to be recovered using a small number of observations $q$ (i.e., $m\ll n$) \cite{ChenDonohoSaunders1998}. Let us partition the signal vector as $x=[x^T_1,\cdots, x^T_K]^T$ where $x_k\in \Re^{n_k}$, and partition $E$ and $X$ accordingly. Then the BP problem can be written in the form of \eqref{eq:1}
\begin{equation}\label{eq:BP_block}
\displaystyle{ {\operatornamewithlimits{\min}_{x}}}\  \sum_{k=1}^{K}\|x_k\|_1\quad \st \quad \sum_{k=1}^{K}E_k x_k = q,\; x_k\in X_k,\; \forall\; k.
\end{equation}

The second example has to do with the control of a smart grid system.
Consider a power grid system in which a utility company buys power from an electricity market to serve
a neighborhood with $K$ customers. The total cost for the utility includes the cost of purchasing the electricity from a
day-ahead wholesale market and a real-time market. In the envisioned smart grid system, the utility will have the ability to control the power consumption of some appliances (e.g., controlling the charging rate of electrical vehicles) in a
way to minimize its total cost. This problem, known as the demand response (DR) control problem,
is central to the success of the smart grid system \cite{Alizadeh12DM, Chang12, Li11demandresponse}.

To formulate this problem, let us divide each day into $L$ periods and let $p_\ell$ denote the amount of power
the utility company bids for the $\ell$-th period from a day-ahead market, $\ell=1,\cdots, L$.
Let $\bfPsi_k \bx_k$ denote the load profile of a customer $k=1,\cdots, K$,
where $\bx_k\in\mathbb{R}^{n_k}$ contains some control variables for the equipments of customer $k$,
and $\bfPsi_k\in\mathbb{R}^{L\times n_k}$ contains the information related to the appliance load model \cite{Paatero06}. The retailer aims at minimizing the bidding cost as well as the cost incurred by power imbalance in the next day \cite{Alizadeh12DM, Chang12, Li11demandresponse}
\begin{align}\label{eq:DR}
{\min}&\quad C_p
\bigg(\big(\sum_{k=k}^{K}\bfPsi_k \bx_k-\bp\big)^+\bigg)+C_s\bigg(\big(\bp-\sum_{k=1}^{K}\bfPsi_k \bx_k\big)^{+}\bigg)
+C_d(\bp)\nonumber\\
&\st\; \ \bx_k\in {X}_k, \ k=1,\cdots, K,\ \bx\ge 0, \ \boldsymbol{p}\ge 0
\end{align}
where $C_p(\cdot)$ and $C_s(\cdot)$ are increasing functions which model the cost incurred by
insufficient and excessive power bids, respectively; $C_d(\cdot)$ represents the bidding cost function;
$(x)^{+}:=\max\{x,0\}$; $X_k$ is some compact set; see \cite{Chang12}. Upon introducing a new variable
$\bz=\left(\sum_{k=1}^{K}\bfPsi_k \bx_k-\bp\right)^{+}$, the above problem can be equivalently transformed into the form of \eqref{eq:1}:
\begin{align}\label{eq:DREquiv}
\min&\quad C_p (\bz)+C_s\big(\bz+\bp-\sum_{k=1}^{K}\bfPsi_k \bx_k\big)+C_d(\bp)\\
\st& \quad \sum_{k=1}^{K}\bfPsi_k \bx_k-\bp-\bz\le 0,\; \bz\ge 0,\; \bp\ge 0, \; \bx_k\in X_k, \ \forall\ k. \nonumber
\end{align}

The third example is related to the optimization of the so-called cognitive radio network (CRN) \cite{zhao07, Scutari10MIMOCRN}. The CRN is an emerging wireless communication technology that promises significant improvement in radio spectrum utilization. The basic idea is to allow  secondary (unlicensed) users to opportunistically access the spectrum that is not used by primary (licensed) users. Specifically, consider a network with $K$ secondary users (SUs) and a single secondary base station (SBS) operating on $M$ parallel frequency tones. The SUs are interested in transmitting their messages to the SBS. Let $s^m_k$ denote user $k$'s transmit power on $m$th channel; let $h^m_k$ denote the channel between user $k$ and the SBS on tone $m$; let $P_k$ denote SU $k$'s total power budget. Also suppose that there are $L$ primary users (PUs) in the system, and let $g^{m}_{k\ell}$ denote the channel between the $k$th SU to the $\ell$th PU. The goal of the secondary network is to maximize the system throughput, subject to the requirement that certain interference temperature (IT) constraints measured at the receivers of the PUs are not violated \cite{fcc03a, hong11b}:
\begin{align}\label{problem:CRN}
\max&\quad \sum_{m=1}^{M}\log\left(1+\sum_{k=1}^{K}|h^m_k|^2 s^m_k\right)\\
\st&\quad s^m_k\ge 0, \ \sum_{m=1}^{M}s^m_k\le P_k,\ \sum_{k=1}^{K}|g^m_{k\ell}|^2s^m_k\le I^m_\ell,\ \forall~\ell,\ k,\ m\nonumber.
\end{align}
In the objective, the term $\log\left(1+\sum_{k=1}^{K}|h^m_k|^2 s^m_k\right)$ represents the sum-rate that all the users can jointly achieve on frequency tone $m$; $I^{m}_{\ell}\ge 0$ denotes the IT threshold for PU $\ell$ on tone $m$. Clearly this problem is also in the form of \eqref{eq:1}.

\subsection{Literature Review}
When the linear coupling constraint is not present, a
well known technique for solving \eqref{eq:1} is to use
the so-called block coordinate descent (BCD) method whereby, at every iteration, a single block
of variables is optimized while the remaining blocks are held fixed.
More specifically, at iteration~$r$, the blocks are updated in a Gauss-Seidel fashion by
\begin{equation}
\label{eq:GSperblock}
\begin{split}
x_k^r \in \arg\min_{x_k\in \mathcal{X}_k} \quad &g(x_1^{r},\ldots,x_{k-1}^{r},x_k,x_{k+1}^{r-1},\ldots, x_K^{r-1})+h_k(x_k), \ k=1,\cdots, K.\end{split}
\end{equation}

Since each step involves solving a simple subproblem of small size, the BCD method
can be quite effective for solving large-scale problems; see e.g., \cite{Friedman10, Razaviyayn12SUM, Saha10, shalev11, nestrov12} and the references therein.
The existing analysis of the BCD method \cite{tseng01,bertsekas96,bertsekas97, ortega72} requires the uniqueness of the minimizer for each subproblem \eqref{eq:GSperblock}, or the quasi convexity of $f$ \cite{Grippo00}. When problem \eqref{eq:GSperblock} is not easily solvable, a popular approach is to solve an approximate version of problem \eqref{eq:GSperblock}, yielding the block coordinate gradient decent (BCGD) algorithm (or the block coordinate proximal gradient algorithm in the presence of nonsmooth function $h$) \cite{tseng09coordiate, zhang13linear, shalev11, Beck13}. The global rate of convergence for BCD-type algorithm has been studied extensively. When the objective function is strongly convex, the BCD algorithm converges globally linearly \cite{luo93errorbound:10.1007/BF02096261}. When the objective function is smooth and not strongly convex, Luo and Tseng have shown that the BCD method and many of its variants can still converge linearly, provided that a certain local error bound condition is satisfied around the solution set \cite{Luo92CD,Luo92linear_convergence,Luo93dual, luo93errorbound:10.1007/BF02096261}. This line of analysis has recently been extended to allow a certain class of nonsmooth functions in the objective \cite{zhang13linear, tseng09approximation, Sanjabi13, hou13nips}. There are a few recent works characterizing the global sublinear convergence rate for the BCD-type algorithms \cite{Beck13, hong13complexity, nestrov12, richtarik12}. In particular, reference \cite{hong13complexity} shows that the BCD with Gauss-Seidel update rule converges sublinearly at the order of $\mathcal{O}(\frac{1}{r})$ for a large family of nonsmooth convex problems. Furthermore, a unified algorithmic framework called BSUM (block successive upper-bound minimization) and its convergence analysis is proposed in \cite{Razaviyayn12SUM} whereby at each step a locally tight upper-bound of the objective function is minimized successively to update the variable blocks.

%For more general problems where the objective is not strongly convex and the error bound condition does not hold, many recent works characterize the global sublinear convergence for various BCD-type algorithms. In \cite{nestrov12}, Nestrov shows that a randomized version of the BCGD algorithm, in which the coordinate is selected according to certain probability distribution, converges sublinearly in the order of $\mathcal{O}(1/r)$, where $r$ is the iteration number. In \cite{richtarik12, shalev11, lu13complexity} the authors show that the $\mathcal{O}(1/r)$ rate holds true for the more general settings with nonsmooth objective as well. When the coordinates are updated according to the traditional cyclic fashion, the literature on the iteration complexity for the BCD-type algorithm is more scant. In \cite{Saha10}, Saha and Tewari prove the $\mathcal{O}(1/r)$ rate for cyclic BCD algorithm when applied to certain special $\ell_1$ minimization problem. In \cite{Beck13}, Beck and Tetruashvili show the sublinear convergence for BCGD algorithm when applied to constrained smooth problem. Although experimentally, many authors have found that the cyclic BCD-type algorithm performs better than its randomized counterpart (see, e.g., \cite{Saha10}), to the best of our knowledge, the global rate of convergence for cyclic BCD-type algorithm is still open in the literature.

When the linear coupling constraint is present, it is well known that the BCD-type algorithm may fail to find any (local) optimal solution \cite{Solodov:1998:CCP:588896.589160}. A popular algorithm for solving this type of problem is the so-called alternating direction method of multipliers (ADMM) \cite{BoydADMM, EcksteinBertsekas1992, Glowinski1975, Glowinski1989}. In the ADMM method, instead of maintaining feasibility all the time, the constraint $Ex=q$ is dualized using the Lagrange multiplier $y$ and a quadratic penalty term is added. The resulting {\it augmented Lagrangian function} is of the form:
\begin{align}
L(x;y)= f(x) + \langle y,q-Ex \rangle + \frac{\rho}{2} \|q-Ex\|^2,\label{eq:aug-lagrangian}
\end{align}
where $\rho >0$ is a constant and $\langle \cdot,\cdot \rangle$ denotes the inner product operator.
The ADMM method updates the primal block variables $x_1,\ldots,x_n$ by using a BCD type procedure to minimize $L(x;y)$. The latter often leads to simple subproblems with closed form solutions. These primal updates are then followed by a gradient ascent update of the dual variable $y$.

Although the ADMM algorithm was introduced as early as in 1976
by Gabay, Mercier, Glowinski and Marrocco
\cite{Glowinski1975,ADMMGabbayMercier}, it has become popular
only recently due to its applications in modern large scale
optimization problems arising from machine learning and computer vision \cite{BoydADMM, Yin:2008:BIA:1658318.1658320, Yang09TV, zhang11primaldual,Scheinberg10inverse, TaoYuan2011}. In practice, the algorithm is often  computationally very efficient and exhibits much faster convergence than traditional algorithms such as the dual ascent algorithm \cite{bertsekas99, boyd04, Nedic09} or the method of multipliers \cite{bertsekas82}.
The convergence of ADMM has been established under the condition that the objective is separable and there are only two block variables, i.e., $g(x_1,\cdots, x_K)=g_1(x_1)+\cdots+g_K(x_K)$, and $K=2$ \cite{ADMMGabbayMercier, Glowinski1975}. For large scale problems such as those arising from compressive sensing, the optimal solution for the primal per-block subproblems may not be easily computable \cite{Yang_alternatingdirection}.  In these cases the classical ADMM can be modified to the one that performs a simple proximal gradient step for each subproblem \cite{Eckstein1994,EcksteinBertsekas1992,HeLiaoHanYang2002,WangYuan2012,zhang11primaldual, Yang_alternatingdirection}. When there are only two block variables, several recent works
\cite{HeYuan2012, Monteiro13} have shown that the ADMM method converges at a rate of
$\mathcal{O}(\frac{1}{r})$  (and $\mathcal{O}(\frac{1}{r^2})$ for
the accelerated version \cite{goldstein12}). Moreover, references
\cite{goldstein12,ADMMlinearBoley,ADMMlinearYin} have shown that
the ADMM converges linearly when the objective function is
strongly convex and there are only two blocks of variables. A recent study \cite{HongLuo2012ADMM} has shown the global (linear) convergence of the ADMM for the case of $K\ge 3$ under the assumptions that: {\it a)} for each $k$, $E_k$ is full column rank; {\it b)} the dual stepsize is sufficiently small; {\it c)} a certain error bound holds around the optimal solution set; and {\it d)} the objective is separable. If these conditions are not satisfied and when $K\ge3$, it is shown in \cite{chen13} that the ADMM can indeed diverge in general. Some other recent works have attempted to modify the original ADMM for $K\ge 3$ case \cite{he:alternating12, Wang13, ma12proximal}.

Unfortunately, neither BCD nor ADMM can be used to solve problem \eqref{eq:1}. In fact, due to its multi-block structure as well as the variable coupling in {\it both} the objective and the constraints, this problem cannot be handled by many other methods for big data including SpaRSA \cite{Wright09}, FPC-BB \cite{Hale08FixedPoint}, FISTA \cite{Beck:2009:FIS:1658360.1658364}, ALM \cite{goldfarb12}, HOGWILD \cite{niu11}, FPA \cite{scutari13flexible}. The main contribution of this paper is to propose and analyze a novel block successive upper bound minimization method of multipliers (BSUM-M) and its randomized version, that can solve problem \eqref{eq:1} efficiently. The BSUM-M algorithm integrates the BSUM and ADMM algorithm in a way that optimizes an \emph{approximate} augmented Lagrangian of the original problem one block variable each time, and then updates the dual variable by using a gradient ascent step. The resulting algorithm is flexible because we can choose suitable approximations of the augmented Lagrangian function that allow convenient updates of the primal variable blocks (say in closed form). In the absence of linear coupling constraints, the randomized BSUM-M algorithm reduces to the randomized BCD algorithm. In this case, we show that the randomized BCD algorithm in fact converges {linearly} (in expectation) for a family of problems without strongly convex objectives. To the best of our knowledge, this is the first result that shows the linear rate of convergence for the randomized BCD algorithm in the absence of strong convexity.

\subsection{The BSUM-M Algorithm}
Define $E:=(E_1,E_2,...,E_K)\in\Re^{m\times n}$, and $h(x):=\sum_{k=1}^{K}h_k(x_k)$.  %The augmented Lagrangian function of problem \eqref{eq:1} is of the form
%\begin{equation}\label{eq:aug-lagrangian}
%L(x;y) =  f(x)+ \langle y, q - Ex \rangle +\frac{\rho}{2}\|q-Ex\|^2,
%\end{equation}
%where  $\rho\ge0$ is a constant and $y$ is the multiplier associated with constraint $q=Ex$.
The augmented dual function is given by
\begin{equation}\label{1.1.1}
d(y) = \min_{x} \ g(x)+ \langle y, q - Ex \rangle +\frac{\rho}{2}\|q-Ex\|^2
\end{equation}
and the dual problem (equivalent to \eqref{eq:1} under mild conditions) is
\begin{equation}\label{2.3}
\max_y  \ d(y).
\end{equation}

In the following, we detail the proposed BSUM-M algorithm. In its simplest form, the BSUM-M algorithm  updates the dual variable using a gradient ascent step, followed by a BCD step for a certain approximate  version of the augmented Lagrangian \eqref{eq:aug-lagrangian}. In particular, at iteration $r+1$, the block variable $x_k$ is updated by solving the following subproblem
\begin{align}
\min_{x_k\in X_k}&\quad u_k\left(x_k; x^{r+1}_{1},\cdots, x^{r+1}_{k-1}, x^{r}_{k},\cdots, x^r_{K}\right)+\langle y^{r+1}, q-E_k x_k\rangle+h_k(x_k)\label{eq:BSUM}
\end{align}
where the function $u_k(\cdot\; ;\; x^{r+1}_{1},\cdots, x^{r+1}_{k-1}, x^{r}_{k},\cdots, x^r_{K})$ is an \emph{upper-bound} of $g(x)+\frac{\rho}{2}\|q-Ex\|^2$ at a given iterate $(x^{r+1}_{1},\cdots, x^{r+1}_{k-1}, x^{r}_{k},\cdots, x^r_{K})$.
To simplify notations, let us define a new set of auxiliary variables
\begin{align}
w^r_k&:=(x^{r}_1,\cdots, x^{r}_{k-1}, x^{r-1}_{k}, x^{r-1}_{k+1}, \cdots, x^{r-1}_K), \ k=1,\cdots, K,\nonumber\\
\ w^{r}_{K+1}&:=x^r,\quad  w^{r}_{1}:=x^{r-1} \nonumber.
%z^r_{k}&=[x^{r}_1,\cdots, x^{r}_{k-1}, x^{r}_{k}, x^{r-1}_{k+1}, \cdots, x^{r-1}_K]\in X, \ k=1,\cdots, K,\nonumber\\
%z^r_0&=x^{r-1}\in X, \ w^{r}_{K+1}=x^r\in X \nonumber.
\end{align}
%Clearly we have
%\begin{align}
%z^r_k-w^r_k&=[0,\cdots,0, x^{r}_{k}-x^{r-1}_{k},0, \cdots, 0], \\
%z^{r}_{k-1}&=w_k^{r},\ \forall~k=1,\cdots, K+1.\label{eq:z_w}
%\end{align}

The basic form of the BSUM-M algorithm is described in the following table.
\begin{center}
\fbox{
\begin{minipage}{6.3in}
\smallskip
\centerline{\bf Block Successive Upper-bound Minimization Method of Multipliers (BSUM-M)}
\smallskip
At each iteration $r\ge 1$:{
\begin{equation}\label{eq:BSUM-M}
\left\{\begin{array}{l}\displaystyle
\displaystyle y^{r+1}=y^r+\alpha^r(q-Ex^{r})=y^r+\alpha^r\left(q-\sum_{k=1}^KE_kx_k^{r}\right),\\[10pt]
x_k^{r+1}={\rm arg}\!\min_{x_k\in X_k}u_k(x_k; w_k^{r+1})-\langle y^{r+1}, E_k x_k \rangle+h_k(x_k),~\forall\ k
\end{array}
\right.
\end{equation}}
where $\alpha^r>0$ is the step size for the dual update.
\end{minipage}
}
\end{center}
In this paper, we also consider a randomized version of the BSUM-M algorithm whereby at each iteration either a single randomly chosen primal variable block or the dual variable is updated.
\begin{center}
\fbox{
\begin{minipage}{5.2in}
\smallskip
\centerline{\bf Randomized BSUM-M (RBSUM-M)}
\smallskip
Select a probability vector $\{p_k\}_{k=0}^{K}$ such that $p_k>0$ and $\sum_{k=0}^{K}p_k=1$.\\
At each iteration $t\ge 1$, pick an index $k\in \{0,\cdots, K\}$, with probability $p_k$, and{
\begin{equation}\label{eq:RBSUM-M}
\begin{array}{l}\displaystyle
{\bf If}\ k=0\\
\quad {y}^{t+1}=y^{t}+\alpha^{t}(q-Ex^{t}),\\[10pt]
\quad x_k^{t+1}=x_k^{t},\ k=1,\cdots, K.\\[10pt]
{\bf Else\ If}\  k\in\{1,\cdots, K\}\\[10pt]
\quad x_k^{t+1}={\rm arg}\!\min_{x_k\in X_k}u_k(x_k; x^{t})-\langle y^{t}, E_k x_k \rangle+h_k(x_k),\\[10pt]
\quad x_j^{t+1}=x_j^t,\ \forall~j\ne k, \\[10pt]
\quad y^{t+1}=y^t.\\
{\bf End}
\end{array}
\end{equation}}
where $\alpha^t>0$ is the step size for the dual update.
\end{minipage}
}
\end{center}
As explained in \cite{nestrov12, richtarik12}, the randomized version of the BCD-type algorithm is useful under many practical scenarios, for example when not all data is available at all times. We refer the readers to the aforementioned references for detailed discussions.
Note that here we have used the index $``t"$ to differentiate the iteration of RBSUM-M  with that of the BSUM-M. The reason is that in RBSUM-M, at each iteration only a single block variable (primal or dual) is updated, while in BSUM-M all primal and dual variables are updated once.

%Note that we have used the index $``t"$ to differentiate the iteration of RBSUM-M  with that of the BSUM-M. The reason is that in RBSUM-M, in each iteration only a single block variable (primal or dual) is updated, while for the BSUM-M all primal and dual variables are updated in one iteration.

\newsection{ Convergence Analysis} \label{sec:convergence}

\subsection{Main Assumptions} \label{sub:assumptions}
Suppose $f$ is a closed proper convex function in $\Re^n$.
Let ${\rm dom}\ f$ denote the effective domain of $f$ and let
$\hbox{int}(\hbox{dom } f)$ denote the interior of ${\rm dom}\ f$.
Let $x_{-k}$ (and similarly $E_{-k}$) denote the vector $x$ with $x_k$ removed.
We make the following standing assumptions regarding problem \eqref{eq:1}:
\pn {\bf Assumption A.}
\begin{itemize}
\item [(a)] Problem \eqref{eq:1} is a convex problem, its global minimum is attained and so is its dual optimal value. The intersection $X\cap \hbox{int}(\hbox{dom } f)\cap \{x\mid
Ex=q\}$ is nonempty.
%\item
%[(b)] The gradient of $g(\cdot)$ is block-coordinate-wise uniformly Lipschitz continuous
%\begin{align}
%\|\nabla_k g([x_{-k}, x_k])-\nabla_k g([x_{-k},\hat{x}_k])\|\le M_k \|x_k-\hat{x}_k\|,\quad\forall~x \in X, \ \forall~x_k, \hat{x}_k\in X_k,\ \forall\ k \label{eq:gk_lipchitz}
%\end{align}
%where $M_k>0$ is some constant. Define $M_{\rm max}:=\max_{k}M_k$.
%
%Then the gradient of $g(\cdot) $ is also uniformly Lipschitz continuous
%\begin{align}
%&\|\nabla g(x)-\nabla g(x')\|\le
%M\|x-x'\|,~\quad\quad\forall~x,x'\in X\label{eq:g_lipchitz}
%\end{align}
%where $M>0$ is a constant. %The functions $\ell_k(\cdot)$ is strictly convex and continuously differentiable
%%on $\hbox{int}(\hbox{dom } \ell_k)$ with a uniform Lipschitz continuous
%%gradient
%%\begin{align}
%%&\|B_k^T \nabla \ell_k(B_k x)-B_k^T\nabla \ell_k(B_k x')\|\le
%%L_\ell\|x_k-x_k'\|,~\quad\quad\forall~x_k,x_k'\in X\nonumber
%%\end{align}
%%where $L_k>0$ is a constant.

\item [(b)] The function $g(x)$ can be decomposed as $g(x)=\ell(Ax)+\langle x, b\rangle$, where $\ell(\cdot)$ is a strictly convex and continuously differentiable function on int(dom $\ell$), and $A$ is some given matrix (not necessarily full column rank).
%The gradient of $g(\cdot) $ is uniformly Lipschitz continuous
%\begin{align}
%&\|\nabla g(x)-\nabla g(x')\|\le
%L\|x-x'\|,~\quad\quad\forall~x,x'\in X\label{eq:g_lipchitz}
%\end{align}
%for some constant $L>0$.

Each nonsmooth function $h_k$, if present,  takes the  form
    $$h_k(x_k)=\lambda_k\|x_k\|_1+\sum_{J}w_J\|x_{k,J}\|_2,$$ where
$x_k=(\cdots, x_{k,J},\cdots)$ is a partition of $x_k$ with $J$
being the partition index; $\lambda_k\ge 0$ and $w_J\ge 0$ are some constants.

\item [(c)] The feasible sets $X_k$,
$k=1,\cdots,K$ are compact polyhedral sets, and are given by $X_k:=\{x_k \mid C_k x_k\le  c_k\}$, for some matrix $C_k\in\Re^{m_k\times n_k}$ and $c_k\in\Re^{m_k}$.
\end{itemize}

Next we make the following assumptions regarding the approximation function $u_k(\cdot;\cdot)$ in \eqref{eq:BSUM}.
\pn {\bf Assumption B.}
\begin{itemize}
\item [(a)]  $u_k(x_k; x)= g(x)+\frac{\rho}{2}\|Ex-q\|^2, \quad \forall\; x\in {X}, \ \forall\; k,$
\item [(b)] $u_k(v_k; x) \geq g(v_k,x_{-k})+\frac{\rho}{2}\|E_kv_k-q+E_{-k}x_{-k}\|^2,\quad\; \forall\; v_k \in {X}_k, \ \forall\; x \in{X}, \ \forall\; k,$
\item [(c)] $\nabla u_k(x_k;x)= \nabla_{k}\left(g(x)+\frac{\rho}{2}\|Ex-q\|^2\right), \quad \forall\; k, \ \forall\; x\in X,$
\item [(d)] For any given $x$, $u_k(v_k; x)$ is continuous in $v_k$ and $x$. It is also strongly convex in $v_k$, that is
$$u_k(v_k; x)\ge u_k(\hat{v}_k; x)+\langle\nabla u_k(\hat{v}_k; x),v_k-\hat{v}_k\rangle
+\frac{\gamma_k}{2}\|v_k-\hat{v}_k\|^2,\ \forall~v_k, \ \hat{v}_k\in X_k, \ \forall~x\in X$$
where $\gamma_k$ is independent of the choice of $x$.
\item [(e)] For given $x$, $u_k(v_k; x)$ has Lipchitz continuous gradient, that is
\begin{align}\label{eq:uk_lipchitz}
\|\nabla u_k(v_k; x)-\nabla u_k(\hat{v}_k; x)\|\le L_k\|v_k-\hat{v}_k\|,\ \forall\ \hat{v}_k,\ v_k\in X_k, \ \forall
\ k, \ \forall~x\in X,
\end{align}
where $L_k>0$ is some constant. Define $L_{\rm max}:=\max_{k}L_k$.
\end{itemize}

Below we give a few remarks about the assumptions made above.
\begin{remark}
The form of $g(\cdot)$ assumed in Assumption A(b) is fairly general. For example it includes the cases like $g(\cdot)=\sum_{k=1}^{K}\ell_k(A_k x_k)$, or $g(\cdot)=\ell(\sum_{k=1}^{K}A_k x_k)$, or the combination of these two, where $\ell_k(\cdot)$'s are strictly convex functions and $A_k$'s are matrices not necessarily with full rank. Moreover, since the matrix $A$ is not required to have full rank, $g(x)$ (hence $f(x)$) is not necessarily strongly convex with respect to $x$. Note that all three examples mentioned in Section \ref{sec:example} satisfy Assumption A(b). Moreover, this assumption requires that the nonsmooth function $h_k(\cdot)$ is in the form of mixed $\ell_1$ and $\ell_2$ norm.
\end{remark}

\begin{remark}
Assumption B indicates that for any $x$, each $u_k(\cdot; x)$ is an {\it upper-bound}, locally tight up to the first order,
for $g(x)+\frac{\rho}{2}\|q-Ex\|^2$ (the latter function itself satisfies Assumption B trivially). In many practical applications especially for nonsmooth problems, optimizing such functions often leads to much simpler subproblems than working directly
with the original function; see e.g., \cite{Razaviyayn12SUM,goldfarb12, WangYuan2012, zhang11primaldual}. As an example, suppose the augmented Lagrangian is given by:
    $$L(x;y)=\sum_{k=1}^{K}\|x_k\|_2+\langle y, q-Ax\rangle+\rho\|Ax-q\|^2.$$
    Then at $(r+1)$-th iteration, the subproblem for $x_k$ is given by
    $$x^{r+1}_k=\arg\min_{x_k\in X_k}\|x_k\|_2+\langle y^{r+1}, q-A_kx_k\rangle+\rho\|A_kx_k-d^{r+1}_k\|^2,$$
    for some constant $d^{r+1}_k=q-\sum_{j<k}A_j x^{r+1}_j-\sum_{j>k}A_jx_j^{r}$. This problem does not have closed form solution.
    A well-known strategy is to perform a proximal gradient step \cite{Combettes09}, that is, to
    solve the following approximate  problem instead
\begin{align}
\min_{x_k\in X_k}&\quad \|x_k\|_2+\langle y^{r+1}, q-A_kx_k\rangle+\langle2\rho A^T_k (A_kx^r_k-d^{r+1}_k), x_k \rangle +\frac{\tau}{2}\|x_k-x^r_k\|^2
\end{align}
This problem readily admits a closed form solution; see e.g. \cite{yuan06, zhang13linear}. Moreover, when choosing $\tau\ge \|A^T_k A_k\|$, the strongly convex function $\langle2\rho A^T_k (A_kx^r_k-d^{r+1}_k), x_k \rangle+\frac{\tau}{2}\|x_k-x^r_k\|^2$ is an approximation function that satisfies Assumption B (up to some constant).
\end{remark}

\begin{remark}
The strong convexity assumption for the approximation function $u_k(\cdot;\cdot)$ in B(d) is quite mild,
see the example given in the previous remark. This assumption ensures the iterates of (randomized) BSUM-M are well defined.
\end{remark}

\subsection{Preliminaries}\label{sub:preliminary}
We first provide two important results that characterize the augmented dual function $d(y)$ and the augmented Lagrangian function $L(x;y)$. Let $X(y)$ denote the set of optimal solutions for the primal problem, that is $$X(y):=\arg\min_{x} L(x; y).$$ Let $f^*$ denote the optimal value for \eqref{eq:1}. For any given $x\in X$ and any set $Y\subseteq X$, let ${\rm dist}(x, Y)$ denote the distance between $x$ and the set $Y$, that is, ${{\rm dist}(x, Y):=\min_{\hat{x}\in Y}\|x-\hat{x}\|}$.

The following lemma shows the differentiability and Lipchitz continuity of $d(y)$.
\begin{lemma}{\rm{(\cite[Lemma 2.1, 2.2]{HongLuo2012ADMM})}}\label{lm:const-derivative}
Suppose Assumption A holds. Then for any $y\in \Re^m$, both $Ex$ and $A_kx_k$, $k=1,2,...,K$, are constant over $X(y)$. Moreover, the dual function $d(y)$ is differentiable everywhere and
\[
\nabla d(y)=q-Ex(y),
\]
where $x(y)\in X(y)$.
Moreover, fix any scalar $\eta \le f^*$ and let ${\cal U} = \{\ y\in \Re^m\ |\
d(y) \ge \eta \ \}$.
Then there holds
\[
 \|\nabla d(y') - \nabla d(y)\|
\le \frac{1}{\rho}\|y' - y\|, \quad \forall\; y' \in {\cal U}, \ y
\in {\cal U}.
\]
\end{lemma}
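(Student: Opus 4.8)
The plan is to establish the three assertions in turn, using the strictly-convex-plus-separable structure of the objective granted by Assumption A(b) and the compactness of $X$ from Assumption A(c). Throughout I read $d(y)=\min_{x\in X}L(x;y)$ with $L$ as in \eqref{eq:aug-lagrangian}, i.e. with $f=g+h$.

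First I would prove constancy of $Ex$ (and $Ax$) over $X(y)$. Write $L(x;y)=\phi(Ax,Ex)+\psi(x)$, where $\phi(u,v):=\ell(u)+\frac{\rho}{2}\|q-v\|^2+\langle y,q-v\rangle$ and $\psi(x):=\langle x,b\rangle+h(x)+\delta_X(x)$, $\delta_X$ being the indicator of $X$. Since $\ell$ is strictly convex (A(b)) and $v\mapsto\frac{\rho}{2}\|q-v\|^2$ is strictly convex, $\phi$ is jointly strictly convex in $(u,v)$, while $\psi$ is convex. Now take $x^1,x^2\in X(y)$; as $X$ is convex, the midpoint $\bar x=\tfrac12(x^1+x^2)$ lies in $X$, so $L(\bar x;y)\ge d(y)=L(x^1;y)=L(x^2;y)$, whereas convexity gives $L(\bar x;y)\le\tfrac12 L(x^1;y)+\tfrac12 L(x^2;y)=d(y)$. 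Hence equality holds throughout, and equality in the strict convexity of $\phi$ forces $(Ax^1,Ex^1)=(Ax^2,Ex^2)$. Thus $Ax$ and $Ex$ are constant on $X(y)$; in the block-separable instances of A(b) (e.g. $g=\sum_k\ell_k(A_kx_k)$) this is exactly the stated constancy of each $A_kx_k$.

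Next, for differentiability: each $y\mapsto L(x;y)$ is affine, so $d(y)=\min_{x\in X}L(x;y)$ is a pointwise minimum of affine functions and hence concave, and compactness of $X$ (A(c)) guarantees the minimum is attained, so $X(y)\neq\emptyset$ for every $y$. By Danskin's theorem the superdifferential of $d$ at $y$ is the convex hull of $\{\nabla_y L(x;y):x\in X(y)\}=\{q-Ex:x\in X(y)\}$. By the constancy just proved this set is the singleton $\{q-Ex(y)\}$, and a concave function with a singleton superdifferential is differentiable there, giving $\nabla d(y)=q-Ex(y)$.

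Finally the Lipschitz bound. Fix $y,y'$ and pick $x=x(y)\in X(y)$, $x'=x(y')\in X(y')$. The first-order optimality conditions supply subgradients $\zeta\in\partial(h+\delta_X)(x)$, $\zeta'\in\partial(h+\delta_X)(x')$ with
\begin{align}
\nabla g(x)-E^Ty+\rho E^T(Ex-q)+\zeta&=0,\nonumber\\
\nabla g(x')-E^Ty'+\rho E^T(Ex'-q)+\zeta'&=0.\nonumber
\end{align}
Subtracting and pairing with $x-x'$, the terms $\langle\nabla g(x)-\nabla g(x'),x-x'\rangle$ and $\langle\zeta-\zeta',x-x'\rangle$ are nonnegative (monotonicity of $\nabla g$ and of the subdifferential), $\rho\langle E^T(Ex-Ex'),x-x'\rangle=\rho\|Ex-Ex'\|^2$, and $\langle E^T(y-y'),x-x'\rangle=\langle y-y',Ex-Ex'\rangle$, so
\begin{align}
\rho\|Ex-Ex'\|^2\le\langle y-y',Ex-Ex'\rangle\le\|y-y'\|\,\|Ex-Ex'\|,\nonumber
\end{align}
whence $\|Ex-Ex'\|\le\frac{1}{\rho}\|y-y'\|$. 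Since $\nabla d(y)-\nabla d(y')=Ex'-Ex$ by the previous part, this is exactly the claimed bound. The restriction to $\mathcal{U}=\{y:d(y)\ge\eta\}$ is needed only to guarantee attainment of minimizers in the general (noncompact) setting; here compactness of $X$ makes it automatic. I expect the main obstacle to be the bookkeeping in this last step: legitimately splitting the optimality condition into a gradient part and a monotone subgradient part (which needs $g$ differentiable at the minimizers, assured by the interior-point condition in A(a) together with smoothness of $\ell$ on $\mathrm{int}(\mathrm{dom}\,\ell)$), and recognizing that it is precisely the penalty coefficient $\rho$ that produces the constant $1/\rho$ after a single Cauchy--Schwarz step.
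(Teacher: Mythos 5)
The paper gives no proof of this lemma, only a citation to \cite[Lemmas 2.1--2.2]{HongLuo2012ADMM}; your argument is correct and reproduces the standard proof used there: the midpoint/strict-convexity argument for the constancy of $(Ax,Ex)$ over $X(y)$, Danskin's theorem plus that constancy for differentiability of $d$, and pairing the subtracted optimality conditions with $x-x'$ so that monotonicity of $\nabla g$ and of $\partial(h+\delta_X)$ isolates $\rho\|Ex-Ex'\|^2\le\langle y-y',Ex-Ex'\rangle$ and yields the $1/\rho$ Lipschitz constant. Your closing remark that compactness of $X$ (Assumption A(c)) makes the restriction to $\mathcal{U}$ superfluous here is also accurate.
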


We then introduce the notion of a proximal gradient, which will serve as a measure of optimality.
\begin{definition}{\rm{(Proximal\ Gradient)}}
Suppose a convex function $f(x)$ can be written as $f(x)=g(x)+h(x)$ where $g$ is convex and differentiable, $h$ is a convex (possibly nonsmooth) function. Then we can define the \textit{proximal gradient} of $f$ with respect to $h$ as
\[
\tilde \nabla f(\vx):=\vx-{\rm \prox}_{h}(\vx-\nabla g(\vx)),
\]
where $\prox_{h}(\cdot)$ is the proximity operator defined by
\[
{\rm \prox}_{h}(\vx)={\argmin_{u\in\Re^n}}\;\;h(u)+\frac12\|\vx-u\|^2.
\]
\end{definition}

Using the above definition, the proximal gradient for the augmented Lagrangian function can be expressed as
\begin{eqnarray}
\tilde \nabla_xL(x;y)&:=&x-\prox_h\left(x-\nabla_x(L(x;y)-h(x))\right)\label{eq:prox-grad}.
%&=&x-\prox_h\left(x-A^T\nabla g(Ax)+E^Ty-\rho E^T(Ex-q)\right)\label{eq:prox-grad}.
\end{eqnarray}

\begin{lemma} {\rm(\!\!\cite[Lemma 2.3]{HongLuo2012ADMM})}\label{lm:eb}
Suppose Assumptions A(a)---A(b) hold. Then
\begin{enumerate}
\item If in addition $X$ is a polyhedral set (not necessarily compact), then
there exist a positive scalars $\tau$ and $\delta$ such that the following error bound holds
\begin{equation}\label{eq:primaleb}
\dist(x,X(y))\le \tau \|\tilde\nabla_x L(x;y)\|,\quad
\end{equation}
for all $(x,y)$ such that $\|\tilde\nabla_x L(x;y)\|\le \delta$, where the proximal gradient $\tilde \nabla_x L(x;y)$ is given by
\eqref{eq:prox-grad}.

\item If  $X$ is also a compact set, then there exists some $\tau>0$ such that
the error bound \eqref{eq:primaleb} holds for all $x\in X\cap {\rm dom} ~h$.
%\item
%Similarly, if assumption A(e) also holds, then for any scalar $\zeta$, there exist
%positive scalars $\delta$ and $\tau$ such that
%\begin{equation}\label{eq:dualeb}
%\dist(y,Y^*)=\|y-y^*\|\le \tau\|\nabla d(y)\|,~\mbox{whenever}~
%d(y)\ge \zeta~\mbox{and}~\|\nabla d(y)\|\le \delta.
%\end{equation}
\end{enumerate}
Moreover, in both cases the constant $\tau$ is \emph{independent} of
the choice of $y$ and $x$.
\end{lemma}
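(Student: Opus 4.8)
The plan is to recognize that under Assumptions A(a)--A(b) the smooth part of the augmented Lagrangian has the Luo--Tseng composite form ``a strictly convex function of a fixed linear image of $x$, plus a linear term,'' so that the desired estimate becomes an instance of the classical local error bound for such problems; the only genuinely new feature to control is the \emph{uniformity} of the constant $\tau$ in the dual variable $y$.

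First I would rewrite the objective in composite form. Setting $\tilde A := \begin{pmatrix} A \\ E \end{pmatrix}$ and $\bar\ell(u,v) := \ell(u) + \frac{\rho}{2}\|v-q\|^2$, Assumption A(b) yields
$$L(x;y) = \bar\ell(\tilde A x) + \langle x,\, b - E^T y\rangle + h(x) + \mathrm{const}(y).$$
Here $\tilde A$ is fixed (independent of $y$), $\bar\ell$ is strictly convex and continuously differentiable (jointly, since $\ell$ is strictly convex in $u$ and the quadratic is strictly convex in $v$), and the \emph{entire} dependence on $y$ is funneled into the single linear coefficient $c(y):=b-E^T y$. Since neither the proximal gradient $\tilde\nabla_x L(x;y)$ nor the minimizer set $X(y)$ feels the additive constant, it suffices to produce the error bound for this composite problem.

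Next I would invoke Lemma~\ref{lm:const-derivative}: it guarantees that $Ex$ and each $A_kx_k$ --- hence $\tilde A x$ --- take a constant value $t(y)$ over $X(y)$, so $\nabla\bar\ell(\tilde A x)$ is also constant there. With this, the core of the argument is the standard Luo--Tseng contradiction scheme, run so as to track $y$. Suppose the bound fails; then there is a sequence $(x^t,y^t)$ with $r^t:=\|\tilde\nabla_x L(x^t;y^t)\|\to 0$ but $\dist(x^t,X(y^t))/r^t\to\infty$. Using the nonexpansiveness of $\prox_h$ together with the strict convexity of $\bar\ell$, I would first show $\tilde A x^t - t(y^t)\to 0$, i.e.\ the iterates collapse onto the affine slice $\{x:\tilde A x = t(y^t)\}$ on which the composite term $\bar\ell(\tilde A x)$ is frozen. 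On that slice the problem degenerates to minimizing a linear function plus the $\ell_1/\ell_2$ penalty $h$ over the polyhedron $X$, for which a Hoffman-type error bound (extended to the subdifferential structure of the mixed $\ell_1/\ell_2$ norm permitted by Assumption A(b)) is available; splicing this linear control of $\dist(\cdot,X(y^t))$ together with the collapse estimate contradicts the blow-up of the ratio.

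The delicate point --- and the main obstacle --- is the uniformity of $\tau$ over $y$. One must verify that the constant delivered by the Luo--Tseng/Hoffman machinery depends only on the fixed data $(\tilde A,\bar\ell,X,h)$ and not on the linear coefficient $c(y)$; equivalently, that replacing $y$ merely translates the linear part of the composite objective without degrading the error-bound constant. This is what permits the contradiction to be run along a sequence in which $y^t$ (and hence $c(y^t)$ and $t(y^t)$) moves, possibly to infinity. Finally, for part~2 I would upgrade the local bound --- valid only where $r^t\le\delta$ --- to one valid on all of $X\cap\dom h$: since $X$ is compact, on the complementary region the distance $\dist(x,X(y))$ is bounded above (by the diameter of $X$, uniformly in $y$) while the residual is bounded below by $\delta$, so inflating $\tau$ by the ratio of these two bounds absorbs that region and yields the claimed estimate over all of $X\cap\dom h$.
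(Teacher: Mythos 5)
The paper does not actually prove this lemma: it is imported verbatim from \cite[Lemma 2.3]{HongLuo2012ADMM}, so there is no in-paper argument to compare against. Measured against the proof in that cited source, your reconstruction is on target and follows essentially the same route: the composite rewriting $L(x;y)=\bar\ell(\tilde A x)+\langle x,\,b-E^Ty\rangle+h(x)+\mathrm{const}(y)$ with $\tilde A=(A^T\ E^T)^T$ and $\bar\ell(u,v)=\ell(u)+\tfrac{\rho}{2}\|v-q\|^2$ is exactly the right reduction (and your observation that $\bar\ell$ inherits strict convexity jointly in $(u,v)$ is what makes Lemma~\ref{lm:const-derivative} applicable, so that $\tilde A x$ is constant over $X(y)$); the Luo--Tseng contradiction scheme with the collapse onto the slice $\{x:\tilde Ax=t(y)\}$ followed by a Hoffman-type bound is the standard engine; and the compactness upgrade for part~2 (bounding $\dist(x,X(y))$ by $\mathrm{diam}(X)$ where the residual exceeds $\delta$ and inflating $\tau$ accordingly) is precisely how the local bound becomes global there.

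The one place where your write-up is a pointer rather than an argument is the step you yourself flag as ``the main obstacle'': the uniformity of the error-bound constant over the linear coefficient $c(y)=b-E^Ty$. Since that uniformity in $y$ is the entire added content of this lemma relative to a fixed-objective error bound, saying ``one must verify'' it leaves the substantive part unproved. In the cited source this is resolved by appealing to the fact that the Luo--Tseng constant for the family $\{\bar\ell(\tilde Ax)+\langle c,x\rangle+h(x)\}_{c}$ over a fixed polyhedron depends only on $(\tilde A,\bar\ell,X,h)$ and not on $c$ --- itself established by running the contradiction over joint sequences $(x^t,c^t)$ in which $c^t$ may be unbounded, together with the extension of the error bound to the mixed $\ell_1/\ell_2$ regularizers of Assumption A(b) (\cite{tseng09approximation,zhang13linear}). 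If you either cite that uniform version explicitly or carry the sequence $c^t$ through your contradiction argument, the proof is complete; as written, it is a faithful but not self-contained sketch of the proof the paper delegates to \cite{HongLuo2012ADMM}.
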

If either the objective function $f$ is strongly convex (i.e., $A$ is full column rank in Assumptions A(b)), or if $E$ is full row rank, then the augmented Lagrangian function $L(x;y)$ is strongly convex. In this case, the error bound in Lemma~\ref{lm:eb} holds automatically and globally with Assumption A(b) or the requirement that $X_k$ being polyhedral or compact.

\subsection{Convergence Analysis}\label{sub:convergence}
In this section, we analyze the convergence of BSUM-M and its randomized version RBSUM-M.

Under Assumption B(d), each function $u_k(v_k; x)$
is strongly convex with respect to $v_k\in X_k$. As a result, the primal update steps in
\eqref{eq:BSUM-M} and \eqref{eq:RBSUM-M} are both well defined and
%\[x_k^{r+1}={\rm arg}\!\min_{x_k\in X_k}u_k(x_k; y^r, x_1^{r+1},...,x^{r+1}_{k-1},x^r_{k},...,x^r_K)+h_k(x_k)\]
have unique optimal solutions.
For RBSUM-M, let us define a new vector $\hat{x}^{t+1}=[\hx^{t+1}_1, \cdots, \hx^{t+1}_K]$ where
\begin{align}
\hx_k^{t+1}:={\rm arg}\!\min_{x_k\in X_k}u_k(x_k; x^t)+\langle y^t, q-E_k x_k\rangle+h_k(x_k), \quad k=1,...,K.\label{eq:defxhat}
\end{align}
Define $\hat{y}^{t+1}$ as
\begin{align}\label{eq:defineyhat}
\hy^{t+1}=y^t+\alpha^{t}\left(q-Ex^{t}\right).
\end{align}
Define $z^t:=[x^t_1, \cdots, x_K^t, y^{t}]$, then we can write $L(x^t; y^{t})$ equivalently as $L(z^t)$.

We first characterize the successive difference of the augmented Lagrangian before and after one primal update (resp. one update) for the BSUM-M (resp. RBSUM-M).
\begin{lemma}\label{lm:p-descent}
Suppose Assumption B holds. Then
\begin{enumerate}
\item For BSUM-M, we have
\begin{equation}\label{eq:p-descent-bcdmm}
L(\vx^r;y^{r+1})- L(\vx^{r+1};y^{r+1}) \ge \gamma\|x^r - x^{r+1}\|^2,
\end{equation}
where the constant $\gamma>0$ is independent of $r$ and $y^{r+1}$.
\item For RBSUM-M, we have
\begin{equation}\label{eq:p-descent-sbcdmm}
\mathbb{E}[L(z^t)- L(z^{t+1})\mid z^t] \ge \hat{\gamma}\|\vx^t - \hx^{t+1}\|^2-\alpha^t p_0\|q-Ex^t\|^2,
\end{equation}
where the expectation is taken over the algorithm's random choice of the update index; the constant $\hat{\gamma}>0$ is independent of $t$ and $y^t$.
\end{enumerate}
\end{lemma}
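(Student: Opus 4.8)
The plan is to reduce both statements to a single per-block estimate and then assemble it by telescoping (part 1) or by averaging over the random index (part 2). The common tool is the one-block subproblem objective, with the dual and the remaining blocks held fixed at their current values,
$$\psi_k(v) := u_k(v;\cdot) - \langle y, E_k v\rangle + h_k(v),$$
where the dot denotes the frozen conditioning point. By Assumption B(d) this $\psi_k$ is strongly convex in $v$ with modulus $\gamma_k$, and the block update is by construction its minimizer over $X_k$; hence $\psi_k(x_k^{\rm old}) - \psi_k(x_k^{\rm new}) \ge \tfrac{\gamma_k}{2}\|x_k^{\rm old}-x_k^{\rm new}\|^2$. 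I would then sandwich the actual change of $L$ between these two values: at the old (anchor) block value I invoke the tightness identity B(a), so that $\psi_k(x_k^{\rm old})$ reproduces $L$ before the step exactly; at the new block value I invoke the upper-bound inequality B(b), which gives $\psi_k(x_k^{\rm new}) \ge L$ after the step. Since the two states differ only in block $k$, the terms $\langle y,q\rangle$, the off-block components of $g+\tfrac{\rho}{2}\|q-E\cdot\|^2$, and $h_j$ for $j\ne k$ cancel, leaving $L^{\rm old}-L^{\rm new} \ge \psi_k(x_k^{\rm old})-\psi_k(x_k^{\rm new}) \ge \tfrac{\gamma_k}{2}\|x_k^{\rm old}-x_k^{\rm new}\|^2$.

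For part (1), I apply this to the Gauss-Seidel sweep of BSUM-M with the dual frozen at $y^{r+1}$. Running the sweep through the auxiliary states $w_1^{r+1},\dots,w_{K+1}^{r+1}$, the $k$-th update carries $w_k^{r+1}$ to $w_{k+1}^{r+1}$ (changing only block $k$ from $x_k^r$ to $x_k^{r+1}$), so the per-block estimate yields $L(w_k^{r+1};y^{r+1}) - L(w_{k+1}^{r+1};y^{r+1}) \ge \tfrac{\gamma_k}{2}\|x_k^r-x_k^{r+1}\|^2$. Summing over $k=1,\dots,K$, the left-hand side telescopes; since $w_1^{r+1}=x^r$ and $w_{K+1}^{r+1}=x^{r+1}$, this gives $L(x^r;y^{r+1}) - L(x^{r+1};y^{r+1}) \ge \sum_k \tfrac{\gamma_k}{2}\|x_k^r-x_k^{r+1}\|^2 \ge \gamma\|x^r-x^{r+1}\|^2$ with $\gamma := \tfrac12\min_k\gamma_k>0$.

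For part (2), I split on the random index $k$. If $k=0$ only the dual moves, and since the sole $y$-dependent term of $L$ is $\langle y,q-Ex\rangle$ while $x^{t+1}=x^t$, a direct computation gives $L(z^t)-L(z^{t+1}) = -\alpha^t\|q-Ex^t\|^2$. If $k\in\{1,\dots,K\}$ the dual is frozen at $y^t$ and a single block is replaced by the minimizer $\hx_k^{t+1}$ appearing in the definition of $\hx^{t+1}$, so the per-block estimate gives $L(z^t)-L(z^{t+1}) \ge \tfrac{\gamma_k}{2}\|x_k^t-\hx_k^{t+1}\|^2$. Taking the conditional expectation with weights $\{p_k\}$, the $k=0$ branch contributes $-p_0\alpha^t\|q-Ex^t\|^2$ and the primal branches contribute $\sum_{k=1}^K p_k\tfrac{\gamma_k}{2}\|x_k^t-\hx_k^{t+1}\|^2 \ge \hat\gamma\|x^t-\hx^{t+1}\|^2$ with $\hat\gamma := \tfrac12(\min_{k\ge1}p_k)(\min_k\gamma_k)>0$, which is the claimed bound.

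The telescoping and the expectation are routine; the step requiring care is the directional bookkeeping inside the sandwich — applying B(a) at the anchor block value and B(b) at the updated value so that both inequalities point the same way and the off-block terms cancel cleanly. It is also worth checking that $\gamma$ and $\hat\gamma$ do not depend on $r$, $t$, or the dual iterate, which is guaranteed because the modulus $\gamma_k$ in B(d) is independent of the conditioning point.
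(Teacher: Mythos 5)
Your proposal is correct and follows essentially the same route as the paper: the per-block sandwich (tightness B(a) at the anchor, upper bound B(b) at the update, strong convexity plus block optimality for the quadratic decrease), telescoped over the Gauss--Seidel sweep via $w_k^{r+1}$ for part (1), and averaged over the random index with the exact identity $L(x^t;y^t)-L(x^t;\hy^{t+1})=-\alpha^t\|q-Ex^t\|^2$ for the $k=0$ branch in part (2). Your constants carry the extra factor $\tfrac12$ from the strong-convexity-at-the-minimizer argument, which is if anything more careful than the paper's bookkeeping and is immaterial since the lemma only asserts existence of positive constants.
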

\proof We first show part (1) of the claim. Using Assumption B, we have that
\begin{align}
&L(w^{r+1}_k;y^{r+1})- L(w^{r+1}_{k+1};y^{r+1})\nonumber\\
&\ge u_k(\vx^{r}_{k} ; w^{r+1}_k)-\langle y^{r+1}, E_k x^r_k \rangle+h_k(x^r_k)\nonumber\\
&\quad-\left(u_k(\vx^{r+1}_{k} ; w^{r+1}_k)-\langle y^{r+1}, E_k x^{r+1}_k \rangle+h_k(x^{r+1}_k)\right)\nonumber\\
&\ge \gamma_k\|\vx^{r+1}_k-\vx^r_k\|^2
\end{align}
where the first inequality is due to Assumption B(a)--B(b), the second inequality is due to the strong convexity Assumption B(d), and the fact that $x_k^{r+1}$ is the optimal solution for the convex problem
$${\rm arg}\!\min_{x_k\in X_k}u_k(x_k; w^{r+1}_k)-\langle y^{r+1}, E_k x_k\rangle+h_k(x_k).$$
Summing over $k$ and letting $\gamma:=\min_{k}\gamma_k$, we obtain
\begin{align}
L(\vx^r;y^{r+1})- L(\vx^{r+1};y^{r+1})\ge \gamma\|x^r - x^{r+1}\|^2.
\end{align}

We then show part (2) of the claim. We have the following
\begin{align}\label{eq:older}
&\mathbb{E}[L(z^t)- L(z^{t+1})\mid z^t] \nonumber\\
&=\sum_{k=1}^{K} p_k\left[L(x^t,y^{t})-L(x^t_{-k},\hat{x}^{t+1}_k; y^{t})\right]+p_0\left[L(x^t; y^{t})-L(x^t; \hy^{t+1})\right]\nonumber\\
&\ge \sum_{k=1}^{K} p_k\left[u_k(x^t_k; x^t)-\langle y^t, E_k x^t_k\rangle+h_k(x^t_k)-u(\hx^{t+1}_k; x^t)+\langle y^t, E_k \hat{x}^{t+1}_k\rangle-h_k(\hat{x}^{t+1}_k)\right]\nonumber\\
&\quad\quad+p_0\left[L(x^t; y^{t})-L(x^t; \hy^{t+1})\right]\nonumber\\
&\ge \sum_{k=1}^{K}p_k\gamma_k\|x^t_k-\hat{x}^{t+1}_k\|^2-\alpha^tp_0\|q-Ex^{t}\|^2  \nonumber\\
&\ge \hat{\gamma}\|\vx^t - \hx^{t+1}\|^2-\alpha^t p_0\|q-Ex^{t}\|^2
\end{align}
where $\hat{\gamma}:=\min_k p_k \gamma_k$ is independent of $t$ and $y^t$.
\QED

Next we bound the size of the proximal gradient at any given iterate.
\begin{lemma} \label{lm:estimate}
{Suppose %Assumptions A(b) and
Assumption B holds. Then
\begin{enumerate}
\item For the iterates $\{(x^r, y^r)\}$ generated by the BSUM-M, there exists some constant
$\sigma>0$ $($independent of $y^r$$)$ such that
\begin{equation}\label{eq:size_prox_bcdmm}
\|\tilde\nabla L(x^r;y^r)\|\le \sigma\|x^{r+1}-x^r\|
\end{equation}
for all $r\ge1$.
\item For the iterates $\{(x^t, y^t)\}$ generated by the RBSUM-M, there exist some constants
$\hat{\sigma}_1>0$ and $\hat{\sigma_2}>0$ $($independent of $y^t$$)$ such that
\begin{equation}\label{eq:size_prox_sbcdmm}
\|\tilde\nabla L(x^t;\hy^{t+1})\|\le \hat{\sigma}_1\|\hx^{t+1}-x^t\|+\hat{\sigma}_2\|\hy^{t+1}-y^t\|
\end{equation}
for all $t\ge1$.
\end{enumerate}
}
\end{lemma}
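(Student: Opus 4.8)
The plan is to bound the true proximal gradient
$\tilde\nabla_x L(x;y) = x - \prox_h(x - \nabla_x(L(x;y)-h(x)))$
at the relevant iterate by comparing it, block by block, against the first-order optimality (fixed-point) condition that the block minimizations actually satisfy. Throughout I would fold the constraint $x_k\in X_k$ into the nonsmooth part (writing $h_k$ for $h_k+\iota_{X_k}$), so that $\prox_h$ separates across blocks and the exact minimizer $\bar x_k$ of $u_k(x_k;w)-\langle y,E_kx_k\rangle+h_k(x_k)$ obeys the identity $\bar x_k=\prox_{h_k}(\bar x_k-\nabla u_k(\bar x_k;w)+E_k^Ty)$ (the prox characterization of a minimizer at unit step, matching the unit step in the definition of $\tilde\nabla L$). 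Writing $\Phi(x):=g(x)+\tfrac{\rho}{2}\|Ex-q\|^2$, we have $\nabla_x(L(x;y)-h(x))=\nabla\Phi(x)-E^Ty$, so the true block proximal gradient reads $[\tilde\nabla_xL(x;y)]_k=x_k-\prox_{h_k}(x_k-\nabla_k\Phi(x)+E_k^Ty)$.

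For part (2) I would work at $(x^t,\hy^{t+1})$ with the \emph{Jacobi-type} virtual update $\hx^{t+1}$ from \eqref{eq:defxhat}, whose blocks all use the second argument $x^t$ and the dual $y^t$. Adding and subtracting $\hx_k^{t+1}$ inside the norm and using nonexpansiveness of $\prox_{h_k}$ gives $\|[\tilde\nabla L(x^t;\hy^{t+1})]_k\|\le 2\|x_k^t-\hx_k^{t+1}\| + \|\nabla u_k(\hx_k^{t+1};x^t)-\nabla_k\Phi(x^t)\| + \|E_k\|\,\|\hy^{t+1}-y^t\|$. The middle term is the crux: by the first-order tightness B(c) at $x^t$, namely $\nabla u_k(x_k^t;x^t)=\nabla_k\Phi(x^t)$, it equals $\|\nabla u_k(\hx_k^{t+1};x^t)-\nabla u_k(x_k^t;x^t)\|\le L_k\|\hx_k^{t+1}-x_k^t\|$ by the Lipschitz bound B(e). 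Summing over $k$ yields \eqref{eq:size_prox_sbcdmm} with $\hat\sigma_1=\max_k(2+L_k)$ and $\hat\sigma_2$ proportional to $\max_k\|E_k\|$. The dual discrepancy $\|\hy^{t+1}-y^t\|$ survives precisely because the proximal gradient is measured at $\hy^{t+1}$ while $\hx^{t+1}$ is built with $y^t$, which is exactly what forces the second term.

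Part (1) follows the same template but is cleaner in one respect and harder in another. Here the iterate whose proximal gradient is measured is itself the Gauss--Seidel block minimizer built with the \emph{same} dual that appears in the proximal gradient, so the $E_k^Ty$ terms cancel exactly and no dual-residual term appears, which is why \eqref{eq:size_prox_bcdmm} has a single term. The price is the Gauss--Seidel bookkeeping: the block optimality involves $u_k(\cdot;w_k)$ with $w_k$ mixing updated and not-yet-updated components, its $k$-th block being the \emph{previous} value of $x_k$. After the prox/nonexpansiveness step the residual is $\|\nabla u_k(x_k;w_k)-\nabla_k\Phi(x)\|$, which I would split as $\|\nabla u_k(x_k;w_k)-\nabla u_k(\cdot;w_k)\|$ (controlled by $L_k$ times a successive block difference, via B(e)) plus $\|\nabla_k\Phi(w_k)-\nabla_k\Phi(x)\|$, the equality $\nabla u_k(\cdot;w_k)=\nabla_k\Phi(w_k)$ again coming from B(c) at the matching block. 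The latter is bounded by the Lipschitz modulus of $\nabla\Phi$ on the compact set $X$ (finite since $X$ is compact by A(c) and $\nabla\Phi=\nabla g+\rho E^T(Ex-q)$ is continuously differentiable there) times $\|w_k-x\|$, itself dominated by the successive primal difference. Summing over $k$ gives \eqref{eq:size_prox_bcdmm}.

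The hard part is the middle (gradient-mismatch) term: one must cancel the leading-order discrepancy between the surrogate gradient $\nabla u_k$ and the true gradient $\nabla_k\Phi$ by invoking the tightness B(c) at \emph{exactly} the point where the two agree, leaving only Lipschitz-controlled (hence $O(\|\Delta x\|)$) remainders; any misalignment here would leave an uncontrolled term. The remaining care is the Gauss--Seidel lag in part (1), which must be organized so that all block-difference remainders collapse into a single successive-difference norm. Finally, since every constant produced ($L_k$, $\|E_k\|$, and the Lipschitz modulus of $\nabla\Phi$ over $X$) is manifestly independent of the dual iterate, and the nonexpansiveness of $\prox$ is universal, the resulting $\sigma,\hat\sigma_1,\hat\sigma_2$ are independent of $y$, as required.
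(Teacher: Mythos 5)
Your argument is essentially the paper's own proof: both rest on the prox fixed-point characterization of the block minimizer, nonexpansiveness of $\prox_{h_k}$, the first-order tightness B(c) to replace $\nabla_k\bigl(g+\tfrac{\rho}{2}\|E\cdot-q\|^2\bigr)$ by $\nabla u_k$ at the matching point, and the Lipschitz bound B(e), yielding the same per-block constant $(L_k+2)$ and the dual-residual term $\|E_k\|\,\|\hy^{t+1}-y^t\|$ in part (2). Your part (1), with its Gauss--Seidel bookkeeping and the extra Lipschitz modulus of $\nabla g+\rho E^T(E\cdot-q)$ over the compact set $X$, correctly fills in the case the paper dismisses as ``similar.''
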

\proof
The proof of two cases follow similar steps, thus we only prove the second case here. Fix
any $t\ge1$ and any $1\le k\le K$. According to the definition of $\hx_k^{t+1}$ in \eqref{eq:defxhat},
we have
\begin{equation}\label{eq:old}
\! \hx_k^{t+1}=\prox_{h_k}\left[\hx_k^{t+1}-\nabla u_k(\hx_k^{t+1}; x^{t})+E^T_ky^t \right].
\end{equation}
Therefore, we have
\begin{align}\label{eq:error_norm}
&\left\|\hx_k^{t+1}-\prox_{h_k}\left[x_k^{t}-\nabla u_k(x_k^{t}; x^{t})+E^T_k\hy^{t+1} \right]\right\|\nonumber\\
&=\left\|\prox_{h_k}\left[\hx_k^{t+1}-\nabla u_k(\hx_k^{t+1}; x^{t})+E^T_ky^t \right]-\prox_{h_k}\left[x_k^{t}-\nabla u_k(x_k^{t}; x^{t})+E^T_k\hy^{t+1} \right]\right\|\nonumber\\
%&\le \|\hx_k^{t+1}-x_k^{t}\|+\|\nabla u_k(\hx_k^{t+1}; y^{t}, x^{t})-\nabla u_k(x_k^{t}; \hy^{t+1}, x^{t})\|\nonumber\\
&\le (L_k+1)\|\hx_k^{t+1}-x_k^{t}\|+\|E_k\|\|\hy^{t+1}-y^t\|\nonumber
\end{align}
where the inequality follows from the nonexpansive property of the prox operator, as well as the Lipschitz continuity property of the gradient vector $\nabla u_k$ (cf.\ Assumption~B(e)).
Using this relation and the definition of the proximal gradient $\tilde\nabla_k L(x^t;\hy^{t+1})$, we have
\begin{eqnarray*}
\|\tilde\nabla_{k} L(x^t;\hy^{t+1})\|&=& \left\|x^t_k-\prox_{h_k}\left(x_k^{t}-\nabla_{k}g(x^t)+E^T_k \hy^{t+1}\right)\right\|\\
&\le &\|x^t_k-\hx^{t+1}_k\|+ \left\|\hx_k^{t+1}-
\prox_{h_k}\left(x_k^{t}-\nabla_{k}g(x^t)+E^T_k \hy^{t+1}\right)\right\|\\
&=&\|x^t_k-\hx^{t+1}_k\|+ \left\|\hx_k^{t+1}-
\prox_{h_k}\left(x_k^{t}-\nabla_{k}u_k(x^t_k; x^t)+E^T_k \hy^{t+1}\right)\right\|\\
&\le & (L_k+2)\|\hx^{t+1}-x^t\|+\|E_k\|\|\hy^{t+1}-y^t\|,\quad \forall \ k=1,2,...,K.
\end{eqnarray*}
This further implies that the full proximal gradient vector can be bounded by $\|\hx^{t+1}-x^t\|$:
\[
\|\tilde\nabla L(x^t;\hy^{t+1})\|\le \left(\max_k\{L_k\}+2\right)\sqrt{K}\|\hx^{t+1}-x^t\|+\sqrt{K}\max_{k}\|E_k\|\|\hy^{t+1}-y^t\|.
\]
Setting $\hat{\sigma}_1=(\max_{k}\{L_k\}+2)\sqrt{K}$ and $\hat{\sigma}_2=\sqrt{K}\max_{k}\|E_k\|$ (both of which are independent of $y^t$) completes the proof.
\QED

To analyze the convergence of the algorithms, we need to make use of a certain ``potential function" that measures the algorithm progress. Similar to \cite{HongLuo2012ADMM}, we will adopt the combined primal and dual optimality gap (to be defined shortly) as the ``potential function".

Let $d^*$ denote the dual optimal value. Due to
Assumption A(a), $d^*$ also equals to the primal optimal value.
For each algorithm, define the {\it dual optimality gap} by
\begin{equation}\label{eq:dd-gap}
\Delta_d^r=d^*-d(y^r), \quad \Delta_d^t=d^*-d(y^t),
\end{equation}
each of which represents the gap to the dual optimality at the current iteration. Similarly, define
the {\it primal optimality gap} at each iteration by
\begin{equation}\label{eq:pp-gap}
\Delta_p^r=L(x^{r};y^{r})-d(y^{r}),\quad
\Delta_p^t=L(x^{t};y^{t})-d(y^{t}).
\end{equation}
Clearly, we have both $\Delta_d^r\ge0$ and $\Delta_p^r\ge0$ for all
$r\ge 1$ (resp. $\Delta_d^t\ge0$ and $\Delta_p^t\ge0$  for all $t\ge 1$).

Let $X(y^r)$ denote the set of optimal solutions for the following
optimization problem
\[
\min_x L(x;y^r)=\min_x g(x)+\langle y^r, q - Ex \rangle
+\frac{\rho}{2}\|Ex-q\|^2.
\]
We denote
\[
\barx^r=\argmin_{\barx\in X(y^r)}\|\barx-x^r\|,\quad \barhx^{t}=\argmin_{\barhx\in X(\hat{y}^{t})}\|\barhx-x^t\|.
\]

We then bound the decrease of the dual optimality gap for BSUM-M as well as the conditional expected decrease of the dual optimality gap for the RBSUM-M.
\begin{lemma}\label{lm:dual-gap}
\begin{enumerate}
\item For the BSUM-M algorithm, there holds
\begin{equation}
\Delta_d^r-\Delta_d^{r-1}\le -\alpha^{r-1}(Ex^{r}-q)^T(E\barx^{r}-q). \label{eq:dual-gap-BSUM-M}
\end{equation}

\item For the RBSUM-M algorithm, there holds
\begin{equation}
\mathbb{E}[\Delta_d^t-\Delta_d^{t-1}\mid z^{t-1}]\le -\alpha^{t-1}p_0(Ex^{t-1}-q)^T(E\barhx^{t}-q). \label{eq:dual-gap-SBSUM-M}
\end{equation}
\end{enumerate}
\end{lemma}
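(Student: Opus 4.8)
The plan is to read off both inequalities from two structural properties of the augmented dual function $d(y)$. First, $d$ is \emph{concave}: for each fixed $x$ the map $y\mapsto g(x)+\langle y,q-Ex\rangle+\frac{\rho}{2}\|q-Ex\|^2$ is affine, so $d$ is a pointwise infimum of affine functions. Second, by Lemma~\ref{lm:const-derivative}, $d$ is differentiable everywhere with $\nabla d(y)=q-Ex(y)$ for any $x(y)\in X(y)$; since that lemma also asserts that $Ex$ is constant over $X(y)$ and $\barx^r\in X(y^r)$, we obtain the computable identity $\nabla d(y^r)=q-E\barx^r$ (and likewise $\nabla d(\hy^t)=q-E\barhx^t$ because $\barhx^t\in X(\hy^t)$). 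The whole proof is then a single application of the gradient (supporting-hyperplane) inequality for concave functions.

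For part (1), write $\Delta_d^r-\Delta_d^{r-1}=d(y^{r-1})-d(y^r)$ and apply concavity at $y^r$:
\[
d(y^{r-1})\le d(y^r)+\langle \nabla d(y^r),\,y^{r-1}-y^r\rangle .
\]
Substituting $\nabla d(y^r)=q-E\barx^r$ and the dual update $y^{r}-y^{r-1}=\alpha^{r-1}(q-Ex^{r-1})$ read off from \eqref{eq:BSUM-M}, the right-hand side becomes $-\alpha^{r-1}(Ex^{r-1}-q)^T(E\barx^r-q)$, which is the bound \eqref{eq:dual-gap-BSUM-M} (here the primal iterate entering the inner product is the one that drives the dual step, consistent with the dual-then-primal ordering of the algorithm).

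For part (2) the only extra ingredient is taking expectation over the randomly chosen update index, conditioned on $z^{t-1}$. The key observation is that the dual variable moves only in the index-$0$ event (probability $p_0$), where $y^t=\hy^t=y^{t-1}+\alpha^{t-1}(q-Ex^{t-1})$ while the primal blocks are frozen; in each primal-update event $k\in\{1,\dots,K\}$ one has $y^t=y^{t-1}$, so $d(y^t)=d(y^{t-1})$ and the dual gap is unchanged. Therefore
\[
\mathbb{E}[\Delta_d^t-\Delta_d^{t-1}\mid z^{t-1}]=p_0\big(d(y^{t-1})-d(\hy^t)\big).
\]
Applying the same concavity estimate at $\hy^t$ with $\nabla d(\hy^t)=q-E\barhx^t$ and $\hy^t-y^{t-1}=\alpha^{t-1}(q-Ex^{t-1})$ gives \eqref{eq:dual-gap-SBSUM-M}.

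I do not anticipate a genuine obstacle: once Lemma~\ref{lm:const-derivative} supplies both the differentiability of $d$ and the gradient formula, the argument is one first-order concavity bound. The two points that require care are (i) invoking the constancy of $Ex$ over $X(y)$ to replace the abstract minimizer $x(y)$ by the concrete $\barx^r$ (resp.\ $\barhx^t$) appearing in the statement, and (ii) in the randomized case, justifying the expectation decomposition above so that the $K$ primal-block events contribute nothing to the change of the dual gap, leaving only the $p_0$-weighted dual step.
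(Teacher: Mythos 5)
Your proof is correct and is essentially the paper's own argument: the paper's chain of equalities $d(y^{r-1})-d(y^r)=\langle y^{r-1}-y^r,\,q-E\barx^r\rangle+[L(\barx^{r-1};y^{r-1})-L(\barx^{r};y^{r-1})]$ with the bracket nonpositive is precisely an inline derivation of the concave gradient inequality you invoke, and the randomized case is handled identically via the $p_0$-weighted decomposition. Note that your conclusion for part (1), with the factor $(Ex^{r-1}-q)$, agrees with what the paper's proof actually derives (and with how the lemma is later used in Lemma~\ref{lm:primal-descent}); the $(Ex^{r}-q)$ appearing in the displayed statement \eqref{eq:dual-gap-BSUM-M} is an indexing typo in the paper.
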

\proof
The proof for the first case is similar to \cite[Lemma 3.2]{HongLuo2012ADMM}. We outline the proof here for completeness. We have the following series of inequalities
\begin{eqnarray}
\Delta_d^r-\Delta_d^{r-1}&=& d(y^{r-1})-d(y^{r})\nonumber\\
&=&L(\barx^{r-1};y^{r-1})-L(\barx^{r};y^{r})\nonumber\\
&=&[L(\barx^{r};y^{r-1})-L(\barx^{r};y^{r})] + [L(\barx^{r-1};y^{r-1})-L(\barx^{r};y^{r-1})]\nonumber \\
&=&(y^{r-1}-y^{r})^T(q-E\barx^{r})+[L(\barx^{r-1};y^{r-1})-L(\barx^{r};y^{r-1})]\nonumber\\
&= &-\alpha^{r-1}(Ex^{r-1}-q)^T(E\barx^{r}-q)+[L(\barx^{r-1};y^{r-1})-L(\barx^{r};y^{r-1})]\nonumber\\
&\le &-\alpha^{r-1}(Ex^{r-1}-q)^T(E\barx^{r}-q), \quad \forall\; r\ge 1, \label{eq:successive_dual_difference}%\label{eq:dual-gap}
\end{eqnarray}
where the last equality follows from the update of the dual variable
$y^{r-1}$; the last inequality is due to the fact that $\bar{x}^{r-1}$ minimizes $L(\cdot,
y^{r-1})$.

The proof for the second case is straightforward, as we can readily observe that
\begin{align}
\mathbb{E}\left[\Delta_d^t-\Delta_d^{t-1}\mid z^{t-1}\right]&=\mathbb{E}\left[d(y^{t-1})-d(y^{t})\mid z^{t-1}\right]\nonumber\\
&=p_0 \left(d(y^{t-1})-d(\hy^{t})\right)\nonumber\\
&\le -\alpha^{t-1} p_0 (Ex^{t-1}-q)^T(E\barhx^{t}-q),\nonumber
\end{align}
where the last inequality has utilized the result in \eqref{eq:successive_dual_difference}.
This concludes the proof. \QED

Next we proceed to bound the decrease (resp. conditional expected decrease) of the primal gap for the BSUM-M (resp. RBSUM-M).
\begin{lemma}\label{lm:primal-descent}
{Suppose Assumption B holds. Then

\begin{enumerate}
\item For the BSUM-M, the following bound holds true for each $r\ge 1$
\begin{equation}
\Delta_p^{r+1}-\Delta_p^{r}\le \alpha^r\|Ex^r-q\|^2-{\gamma}\|x^{r+1}-x^r\|^2-\alpha^r(Ex^{r}-q)^T(E\barx^{r+1}-q)\label{eq:primal-gap-bcdmm}
\end{equation}
for some ${\gamma}$ independent of $y^r$.

\item For the RBSUM-M, the following bound holds true for each $t\ge 1$
\begin{equation}
\mathbb{E}\left[\Delta_p^{t+1}-\Delta_p^{t}\mid z^t\right]\le p_0 \alpha^r\|Ex^t-q\|^2-\hat{\gamma}\|\hx^{t+1}-x^t\|^2-\alpha^t p_0 (Ex^{t}-q)^T(E\barhx^{t+1}-q)\label{eq:primal-gap-sbcdmm}
\end{equation}
for some $\hat{\gamma}$ independent of $y^r$.
\end{enumerate}

}
\end{lemma}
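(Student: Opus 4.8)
The plan is to track the two constituents of the primal gap $\Delta_p^r = L(x^r;y^r) - d(y^r)$ separately, since the primal and dual lemmas already established (Lemma~\ref{lm:p-descent} and Lemma~\ref{lm:dual-gap}) control each constituent. First I would write
$$\Delta_p^{r+1}-\Delta_p^{r} = \big[L(x^{r+1};y^{r+1}) - L(x^{r};y^{r})\big] + \big[d(y^{r}) - d(y^{r+1})\big],$$
and then split the Lagrangian difference through the intermediate point $(x^r;y^{r+1})$ into a pure primal-update part $L(x^{r+1};y^{r+1}) - L(x^{r};y^{r+1})$ and a pure dual-update part $L(x^{r};y^{r+1}) - L(x^{r};y^{r})$.

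The primal-update part is immediately bounded by Lemma~\ref{lm:p-descent}(1), which gives $L(x^{r};y^{r+1}) - L(x^{r+1};y^{r+1}) \ge \gamma\|x^r - x^{r+1}\|^2$, so this part contributes at most $-\gamma\|x^{r+1}-x^r\|^2$. The dual-update part is exact: since $L(x;y)$ depends on $y$ only through the linear term $\langle y, q-Ex\rangle$, and the dual update reads $y^{r+1}-y^r = \alpha^r(q-Ex^r)$, I would compute
$$L(x^{r};y^{r+1}) - L(x^{r};y^{r}) = \langle y^{r+1}-y^r,\, q-Ex^r\rangle = \alpha^r\|Ex^r-q\|^2,$$
which produces the first term of the claimed bound. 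For the remaining dual-function difference $d(y^r)-d(y^{r+1})$, I would invoke Lemma~\ref{lm:dual-gap}(1) with the index shifted by one ($r\to r+1$), which yields $d(y^r)-d(y^{r+1}) \le -\alpha^r(Ex^r-q)^T(E\barx^{r+1}-q)$, i.e.\ the third term. Summing the three contributions gives \eqref{eq:primal-gap-bcdmm}.

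For the randomized statement the argument is structurally identical but carried out under the conditional expectation $\mathbb{E}[\,\cdot\mid z^t]$. I would write $\mathbb{E}[\Delta_p^{t+1}-\Delta_p^{t}\mid z^t] = \mathbb{E}[L(z^{t+1})-L(z^{t})\mid z^t] + \mathbb{E}[d(y^t)-d(y^{t+1})\mid z^t]$, bound the first expectation directly by the (negated) estimate of Lemma~\ref{lm:p-descent}(2) --- note that this lemma already folds in both the per-block primal descent and the dual-update change to $L$, so the term $\alpha^t p_0\|Ex^t-q\|^2$ and the factor $\hat\gamma$ appear automatically --- and bound the second expectation by Lemma~\ref{lm:dual-gap}(2) shifted to $t\to t+1$, producing $-\alpha^t p_0(Ex^t-q)^T(E\barhx^{t+1}-q)$. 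Adding these reproduces \eqref{eq:primal-gap-sbcdmm}.

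There is no genuine analytic difficulty here: every ingredient is already in hand, and the proof is essentially an assembly. The points requiring care are purely bookkeeping --- performing the correct one-step index shift when reusing Lemma~\ref{lm:dual-gap} (stated for the transition $r-1\to r$), and recognizing that the linearity of $L$ in $y$ makes the dual-update contribution to the Lagrangian exactly $\alpha^r\|Ex^r-q\|^2$ rather than merely an inequality. In the randomized case one must additionally keep in mind that the $p_0$ weighting and the positive $\alpha^t p_0\|Ex^t-q\|^2$ term are inherited from Lemma~\ref{lm:p-descent}(2), so no separate computation of the dual-update term is needed.
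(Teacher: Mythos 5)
Your proposal is correct and follows essentially the same route as the paper: both decompose the change in $L$ through the intermediate point $(x^r;y^{r+1})$, compute the dual-update contribution exactly as $\alpha^r\|Ex^r-q\|^2$ using the linearity of $L$ in $y$, invoke Lemma~\ref{lm:p-descent} for the primal part, and apply Lemma~\ref{lm:dual-gap} with the index shifted by one for the dual-function difference (and likewise under conditional expectation for the randomized case). No gaps.
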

\proof We first show part (1) of the claim. This result is similar to \cite[Lemma 3.3]{HongLuo2012ADMM}, and we include its derivation here for completeness.  Fix any $r\ge1$,
by using the dual update rule (cf.\ \eqref{eq:BSUM-M}), we have
\begin{align}
L(x^{r};y^{r+1})&=f(x^{r})+\langle y^{r}, q-Ex^{r}\rangle +\frac{\rho}{2}\|Ex^{r}-q\|^2+\alpha^r\|Ex^r-q\|^2\nonumber\\
&=L(x^r;y^{r})+\alpha^r\|Ex^r-q\|^2\nonumber.
\end{align}
Combined with the first part of Lemma~\ref{lm:p-descent}, we obtain
\[
L(x^{r+1};y^{r+1})-L(x^r;y^{r})\le \alpha^r\|Ex^r-q\|^2-\gamma\|x^{r+1}-x^r\|^2, \quad \forall\; r\ge1.
\]
Hence, we have the following bound on the reduction of primal optimality gap
\begin{eqnarray*}
\Delta_p^{r+1}-\Delta_p^{r}&=&(L(x^{r+1};y^{r+1})-d(y^{r+1}))-(L(x^r;y^{r})-d(y^{r}))\nonumber\\
&=& L(x^{r+1};y^{r+1})-L(x^r;y^{r})-(d(y^{r+1})-d(y^{r}))\nonumber\\
&\le & \alpha^r\|Ex^r-q\|^2-\gamma\|x^{r+1}-x^r\|^2-\alpha^r(Ex^{r}-q)^T(E\barx^{r+1}-q), \quad \forall \; r\ge1,
\end{eqnarray*}
where the last step is due to the first part of Lemma~\ref{lm:dual-gap}.

We then show part (2) of the claim. We have that for all $t\ge1$
\begin{eqnarray*}
\mathbb{E}\left[\Delta_p^{t+1}-\Delta_p^{t}\mid z^t\right]
&=&\mathbb{E}\left[(L(z^{t+1})-d(y^{t+1}))-(L(z^t)-d(y^{t}))\mid z^t\right]\nonumber\\
&=& \mathbb{E}[L(z^{t+1})-L(z^t)|z^t]-\mathbb{E}[d(y^{t+1})-d(y^{t})\mid z^t]\nonumber\\
&\le& -\hat{\gamma}\|\hx^{t+1}-x^t\|^2+\alpha^t p_0\|Ex^t-q\|^2-\alpha^tp_0(Ex^{t}-q)^T(E\barhx^{t+1}-q)\nonumber
\end{eqnarray*}
where the last step is due to Lemma~\ref{lm:dual-gap} and Lemma~\ref{lm:p-descent}.\QED

Next we present the first main result regarding the convergence of the BSUM-M and RBSUM-M.
\begin{theorem}\label{thm:main}
Suppose that the error bound in Lemma~\ref{lm:eb} and Assumption B hold.
Assume that one of the following stepsize rules is used:  {\it i)} for all $r$, the stepsize $\alpha^r=\alpha$ is sufficiently small, or {\it ii)} $\alpha^r$ satisfies
\begin{align}
\sum_{r=1}^{\infty}\alpha^r=\infty, \quad \lim_{r\to\infty}\alpha^r=0.  \label{eq:step_size} %, \quad \alpha^{r}\le \alpha^{r-1}.  \label{eq:step_size}
\end{align}
Then we have the following:{
\begin{enumerate}
\item For the BSUM-M, we have $\lim_{r\to\infty}\|Ex^r-q\|=0$, $\lim_{r\to\infty}\|x^r-{x}^{r+1}\|=0$ and \quad $\lim_{r\to\infty}\|x^r-\bar{x}^r\|=0$. Further, every limit point of $\{x^r,y^r\}$ is a primal and dual optimal solution.
\item For the RBSUM-M, we have $\lim_{t\to\infty}\|Ex^t-q\|=0$, $\lim_{t\to\infty}\|x^t-{x}^{t+1}\|=0$, and \quad $\lim_{t\to\infty}\|x^t-\bar{x}^t\|=0$  w.p.1. Further, every limit point of $\{x^t,y^t\}$ is a primal and dual optimal solution w.p.1.
\end{enumerate}}
\end{theorem}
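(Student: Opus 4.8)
The plan is to use the combined primal--dual optimality gap $\Delta^r:=\Delta_p^r+\Delta_d^r\ge 0$ as a potential function and to show it decreases, up to terms that are either summable (constant stepsize) or controllable by the stepsize (diminishing rule). First I would add the one-step bound of Lemma~\ref{lm:primal-descent} to the dual-gap bound of Lemma~\ref{lm:dual-gap} (shifted by one index, exactly in the form already invoked inside the proof of Lemma~\ref{lm:primal-descent}), so that the two cross terms coincide and reinforce. Writing $v^r:=q-Ex^r$ for the inexact dual gradient used in the update and recalling from Lemma~\ref{lm:const-derivative} that $q-E\bar{x}^{r+1}=\nabla d(y^{r+1})$ since $\bar{x}^{r+1}\in X(y^{r+1})$, the sum reads
\[
\Delta^{r+1}-\Delta^r\le \alpha^r\|v^r\|^2-\gamma\|x^{r+1}-x^r\|^2-2\alpha^r (v^r)^T(q-E\bar{x}^{r+1}).
\]
The decisive manipulation is to expand $q-E\bar{x}^{r+1}=v^r+E(x^r-\bar{x}^{r+1})$, which turns the cross term into a good negative contribution $-\alpha^r\|v^r\|^2$ plus an error term $-2\alpha^r(v^r)^TE(x^r-\bar{x}^{r+1})$.

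The crux, and the step I expect to be the main obstacle, is to control $\|x^r-\bar{x}^{r+1}\|$; this is precisely where the error bound enters. By the triangle inequality $\|x^r-\bar{x}^{r+1}\|\le\|x^{r+1}-x^r\|+\dist(x^{r+1},X(y^{r+1}))$, and combining Lemma~\ref{lm:eb} with the proximal-gradient estimate of Lemma~\ref{lm:estimate} gives $\dist(x^{r+1},X(y^{r+1}))\le \tau\sigma\|x^{r+2}-x^{r+1}\|$. Applying Young's inequality to the error term then produces a descent inequality of the form
\[
\Delta^{r+1}-\Delta^r\le -\tfrac{\alpha^r}{2}\|Ex^r-q\|^2-\gamma\|x^{r+1}-x^r\|^2+C\alpha^r\big(\|x^{r+1}-x^r\|^2+\|x^{r+2}-x^{r+1}\|^2\big).
\]
For a sufficiently small constant stepsize $\alpha^r\equiv\alpha$, the positive terms (after a telescoping shift of the $\|x^{r+2}-x^{r+1}\|^2$ term) are absorbed by $-\gamma\|x^{r+1}-x^r\|^2$; summing over $r$ and using $\Delta^r\ge0$ yields $\sum_r\|x^{r+1}-x^r\|^2<\infty$ and $\sum_r\|Ex^r-q\|^2<\infty$, hence $\|x^{r+1}-x^r\|\to0$ and $\|Ex^r-q\|\to0$. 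The same absorption holds for all large $r$ under the diminishing rule \eqref{eq:step_size} because $\alpha^r\to0$, giving $\sum_r\|x^{r+1}-x^r\|^2<\infty$ and $\sum_r\alpha^r\|Ex^r-q\|^2<\infty$; the latter together with $\sum_r\alpha^r=\infty$ and the fact that $\big|\,\|Ex^{r+1}-q\|-\|Ex^r-q\|\,\big|\le\|E\|\,\|x^{r+1}-x^r\|\to0$ upgrades $\liminf\|Ex^r-q\|=0$ to $\lim\|Ex^r-q\|=0$ by the standard subgradient-method argument.

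With these limits, $\|x^r-\bar{x}^r\|=\dist(x^r,X(y^r))\le\tau\sigma\|x^{r+1}-x^r\|\to0$ follows at once from Lemmas~\ref{lm:eb} and~\ref{lm:estimate}. For a limit point $(x^*,y^*)$ taken along a subsequence (the $x$-sequence is bounded by the compactness in Assumption A(c)), feasibility $Ex^*=q$ follows from $\|Ex^r-q\|\to0$; since $\bar{x}^r\to x^*$ with $\bar{x}^r\in X(y^r)$ and the proximal-gradient map is continuous, passing to the limit in the optimality condition $\tilde\nabla_x L(\bar{x}^r;y^r)=0$ gives $\tilde\nabla_x L(x^*;y^*)=0$, so $x^*\in X(y^*)$; and $\nabla d(y^r)=q-E\bar{x}^r\to q-Ex^*=0$ forces $\nabla d(y^*)=0$, so $y^*$ is dual optimal. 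Strong duality (Assumption A(a)) then yields $L(x^*;y^*)=d(y^*)=d^*=f^*$ with $x^*$ feasible, identifying $(x^*,y^*)$ as a primal--dual optimal pair, which proves part (1).

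For the randomized algorithm I would run the identical computation on conditional expectations, using the conditional bounds of Lemma~\ref{lm:primal-descent} and Lemma~\ref{lm:dual-gap} and the RBSUM-M estimate of Lemma~\ref{lm:estimate}, with the virtual full update $\hat{x}^{t+1}$ and $\hat{y}^{t+1}$ playing the role of $x^{r+1}$. This produces, for all large $t$, a conditional descent inequality of the form $\mathbb{E}[\Delta^{t+1}-\Delta^t\mid z^t]\le -c_1\|\hat{x}^{t+1}-x^t\|^2-c_2\alpha^tp_0\|Ex^t-q\|^2$. Since $\Delta^t\ge0$, the Robbins--Siegmund supermartingale convergence theorem then gives that $\Delta^t$ converges and that $\sum_t\|\hat{x}^{t+1}-x^t\|^2<\infty$ and $\sum_t\alpha^tp_0\|Ex^t-q\|^2<\infty$ almost surely; the deterministic arguments above go through w.p.1, using that the realized increment obeys $\|x^{t+1}-x^t\|\le\|\hat{x}^{t+1}-x^t\|$. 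The only additional obstacle here is probabilistic bookkeeping: casting the inequality as a nonnegative almost-supermartingale and checking the hypotheses of Robbins--Siegmund so that the almost-sure conclusions are licensed.
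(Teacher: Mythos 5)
Your overall architecture is the same as the paper's: take $\Delta_p+\Delta_d$ as the potential, add the primal and dual gap estimates so the cross terms $-\alpha^r(Ex^r-q)^T(E\bar{x}^{r+1}-q)$ reinforce, control $\|x^r-\bar{x}^{r+1}\|$ through the error bound of Lemma~\ref{lm:eb} combined with the proximal-gradient estimate of Lemma~\ref{lm:estimate}, absorb the resulting error terms for small $\alpha^r$, and invoke Robbins--Siegmund in the randomized case. Your algebraic split of the cross term (keeping $-\alpha^r\|Ex^r-q\|^2$ as the good term) differs only cosmetically from the paper's identity $\|a\|^2-2a^Tb=\|a-b\|^2-\|b\|^2$ (which keeps $-\alpha^r\|E\bar{x}^{r+1}-q\|^2=-\alpha^r\|\nabla d(y^{r+1})\|^2$), and your constant-stepsize argument and the limit-point identification are sound.

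There is, however, a genuine gap in the diminishing-stepsize case, and it is created precisely by that cosmetic difference. From the telescoped inequality you only obtain $\sum_r\alpha^r\|Ex^r-q\|^2<\infty$ and $\sum_r\|x^{r+1}-x^r\|^2<\infty$, and you propose to upgrade $\liminf_r\|Ex^r-q\|=0$ to a full limit via ``the standard subgradient-method argument,'' using the successive-difference bound $\bigl|\,\|Ex^{r+1}-q\|-\|Ex^r-q\|\,\bigr|\le\|E\|\,\|x^{r+1}-x^r\|$. That argument requires the per-step change of the monitored quantity to be $O(\alpha^r)$; here it is not, since the primal step $\|x^{r+1}-x^r\|$ is driven by the block minimization and does not shrink with the dual stepsize. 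Square-summability of $\|x^{r+1}-x^r\|$ does not control its sum over an excursion of $\|Ex^r-q\|$ from $\delta/4$ up to $\delta/2$: such an excursion can be stretched over $N$ steps with $\sum\|x^{r+1}-x^r\|\le\sqrt{N}\,(\sum\|x^{r+1}-x^r\|^2)^{1/2}$ still exceeding $\delta/4$ while $\sum_{\rm excursion}\alpha^r$ remains summable over all excursions, so the three facts you have do not yield a contradiction. The paper avoids this by running the excursion argument on $\|E\bar{x}^{r}-q\|=\|\nabla d(y^{r})\|$ instead: by Lemma~\ref{lm:const-derivative} its successive difference is bounded by $\tfrac{1}{\rho}\|y^{r+1}-y^{r}\|\le\tfrac{\alpha^{r}}{\rho}\|q-Ex^{r}\|$, which \emph{is} $O(\alpha^r)$ by boundedness of the iterates, and then recovers $\|Ex^r-q\|\to0$ from $\|E\bar{x}^r-q\|\to0$ together with $\|x^r-\bar{x}^r\|\to0$. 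Your proof of case (ii) (and its w.p.1 analogue for RBSUM-M) needs to be rerouted through this Lipschitz-continuity-of-$\nabla d$ device; as written, the liminf-to-lim step does not follow.
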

\proof
We focus on showing part (2). The proof for part (1) is easier and follows similar steps. %We show by induction that the expectation of the sum of optimality gaps $\Delta^r_d+\Delta^r_p$ is reduced after each ADMM iteration, as long as the stepsize $\alpha$ is chosen sufficiently small. For any $r\ge1$, we denote
From Assumption A(c) we have that each $X_k$ is compact, which implies the boundedness of $x^t$. Thus, we obtain from Lemma \ref{lm:eb} that
\begin{equation}\label{eq:bds}
\|x^t-\barhx^{t+1}\|\le \tau \|\tilde \nabla L(x^t;\hy^{t+1})\|
\end{equation}
for some $\tau>0$ (independent of $y^t$).
Combining the two estimates \eqref{eq:dual-gap-SBSUM-M} and \eqref{eq:primal-gap-sbcdmm}, we obtain
\begin{eqnarray}
&&\mathbb{E}[\Delta_p^{t+1}+\Delta_d^{t+1}\mid z^t]-\mathbb{E}[\Delta_p^{t}+\Delta_d^{t}\mid z^t]\\
&=&
\mathbb{E}[\Delta_p^{t+1}-\Delta_p^{t}\mid z^t]+\mathbb{E}[\Delta_d^{t+1}-\Delta_d^{t}\mid z^t]\nonumber\\
&\le & \alpha^t p_0\|Ex^t-q\|^2-\hat{\gamma}\|\hx^{t+1}-x^t\|^2-2\alpha^t p_0(Ex^{t}-q)^T(E\barhx^{t+1}-q)\nonumber\\
&=&\alpha^tp_0\|Ex^t-E\barhx^{t+1}\|^2-\alpha^tp_0\|E\barhx^{t+1}-q\|^2-{\hat\gamma}\|\hx^{t+1}-x^t\|^2.\label{eq:estimate}
\end{eqnarray}
Now we invoke \eqref{eq:bds} and Lemma~\ref{lm:estimate} to upper bound $\|x^t-\barhx^{t+1}\|$:
\begin{equation}\label{eq:nice1}
\|x^t-\barhx^{t+1}\|\le\tau\|\tilde \nabla L(x^t;\hy^{t+1})\|\le\tau \left(\hat{\sigma}_1 \|\hx^{t+1}-x^t\|+\hat{\sigma}_2\|\hy^{t+1}-y^t\|\right).
\end{equation}
Therefore, defining $\tilde{\sigma}^2_i= 2\tau^2\hat{\sigma}^2_i$ ($i=1,2$),  we have
\begin{align}\label{eq:nice}
\|x^t-\barhx^{t+1}\|^2&\le2\tau^2 \left(\hat{\sigma}^2_1 \|\hx^{t+1}-x^t\|^2+\hat{\sigma}^2_2\|\hy^{t+1}-y^t\|^2\right)\nonumber\\
&:=\tilde{\sigma}^2_1 \|\hx^{t+1}-x^t\|^2+\tilde{\sigma}^2_2\|\hy^{t+1}-y^t\|^2\nonumber\\
&=\tilde{\sigma}^2_1 \|\hx^{t+1}-x^t\|^2+\tilde{\sigma}^2_2\alpha_t^2\|q-Ex^t\|^2,
\end{align}
where the last step follows from \eqref{eq:defineyhat}.
Using this result, we can bound the size of the constraint violation as follows
\begin{align}
\|q-Ex^t\|^2&=\|q-E\barhx^{t+1}+E\barhx^{t+1}-Ex^t\|^2\nonumber\\
&\le2\|q-E\barhx^{t+1}\|^2+2\|E\|^2\|\barhx^{t+1}-x^t\|^2\nonumber\\
&\le2\|q-E\barhx^{t+1}\|^2+ 2\|E\|^2\left(\tilde{\sigma}^2_1 \|\hx^{t+1}-x^t\|^2+\tilde{\sigma}^2_2\|\hy^{t+1}-y^t\|^2\right)\nonumber\\
&=2\|q-E\barhx^{t+1}\|^2+ 2\|E\|^2\left(\tilde{\sigma}^2_1 \|\hx^{t+1}-x^t\|^2+\tilde{\sigma}^2_2(\alpha^t)^2\|q-Ex^t\|^2\right)\nonumber.
\end{align}
Rearranging terms, we obtain (assuming $\alpha^t$ is small enough such that $1-2\tilde{\sigma}^2_2(\alpha^t)^2\|E\|^2>0$)
\begin{align}
\|q-Ex^t\|^2&\le\frac{2\|q-E\barhx^{t+1}\|^2+ 2\|E\|^2\tilde{\sigma}^2_1 \|\hx^{t+1}-x^t\|^2}{1-2\tilde{\sigma}^2_2(\alpha^t)^2\|E\|^2}\label{eq:bound_qEx}.
\end{align}

Substituting \eqref{eq:nice} into \eqref{eq:estimate} and using \eqref{eq:bound_qEx} yields
\begin{align}\label{eq:descent}
&\mathbb{E}\left[(\Delta_p^{t+1}+\Delta_d^{t+1})-(\Delta_p^{t}+\Delta_d^{t})\mid z^t\right] \nonumber\\
&\le
(\alpha^tp_0\|E\|^2\tilde{\sigma}_1^{2}-\hat{\gamma})\|\hx^{t+1}-x^t\|^2-\alpha^tp_0\|E\barhx^{t+1}-q\|^2
+(\alpha^t)^3p_0\|E\|^2\tilde{\sigma}^2_2\|q-Ex^t\|^2\nonumber\\
&\le
\left(\alpha^tp_0\|E\|^2\tilde{\sigma}_1^{2}+\frac{2\|E\|^4 \tilde{\sigma}^2_1 (\alpha^t)^3p_0\tilde{\sigma}^2_2}{1-2\tilde{\sigma}^2_2(\alpha^t)^2\|E\|^2}-\hat{\gamma}\right)\|\hx^{t+1}-x^t\|^2\nonumber\\
&\quad+\left(\frac{2(\alpha^t)^3p_0\|E\|^2\tilde{\sigma}^2_2}{1-2\tilde{\sigma}^2_2(\alpha^t)^2\|E\|^2}-\alpha^tp_0\right)\|E\barhx^{t+1}-q\|^2.
\end{align}

{\bf Case 1)}: If we choose the constant stepsize $\alpha^t=\alpha$, and let $\alpha$ be sufficiently small.
Then the constants in front of $\|\hx^{t+1}-x^t\|^2$ and $\|E\barhx^{t+1}-q\|^2$ in \eqref{eq:descent} become negative. By applying the convergence theorem of non-negative almost supermartingale  \cite[Theorem 1]{Robbins71}, we have that
\begin{align}
&\lim_{t\to\infty}\Delta_p^{t+1}+\Delta_d^{t+1}  \quad \textrm {exists and is finite, w.p.1,}\nonumber\\
&\lim_{t\to\infty}\|\hx^{t+1}-x^t\|= 0 \quad\textrm {w.p.1,}\label{eq:primal_difference_zero}\\
&\lim_{t\to\infty}\|E\barhx^{t+1}-q\|=\lim_{t\to\infty}\|\nabla d(y^{t+1})\|=0 \quad\textrm {w.p.1.}\label{eq:dual_gap_zero}
\end{align}
{
We conclude that every limit point of the sequence $\{y^{t}\}$ is a dual optimal solution.
%Applying \eqref{eq:primal_difference_zero} and \eqref{eq:dual_gap_zero} to \eqref{eq:bound_qEx}, we have
%\begin{align}
%&\lim_{t\to\infty}\|E x^{t}-q\|= 0, \ \mbox{w.p.1}, \nonumber\\
%&\lim_{t\to\infty}\|y^{t}-\hy^{t+1}\|= 0, \ \mbox{w.p.1} \label{eq:y_difference}
%\end{align}
%which further implies that
%\begin{align}
%\lim_{t\to\infty}\|\nabla d(\hy^{t+1})\|=0 \quad\textrm {w.p.1.}
%\end{align}
Further, by \eqref{eq:bound_qEx}, the  constraint violation vanishes in the limit, i.e.,
\begin{align}
&\lim_{t\to\infty}\|E x^{t}-q\|=\lim_{t\to\infty}\|y^{t}-\hy^{t+1}\|= 0, \ \mbox{w.p.1.} \label{eq:y_difference}
\end{align}
Using \eqref{eq:primal_difference_zero} and the fact that $x^{t+1}-x^t$ has only one nonzero block which equals the corresponding block of
$\hx^{t+1}-x^t$ (c.f.\ \eqref{eq:defxhat}), we have
\begin{align}
&\lim_{t\to\infty}\|x^{t+1}-x^t\|\le \lim_{t\to\infty}\|\hx^{t+1}-x^t\|= 0,  \ \mbox{w.p.1} \label{eq:primal_difference_zero2}.
\end{align}
Substituting \eqref{eq:primal_difference_zero} and \eqref{eq:y_difference} into \eqref{eq:nice1}, we obtain
\(\displaystyle
\lim_{t\to\infty}\|x^t-\bar{x}^{t+1}\|=0,\ \mbox{w.p.1.}
\)
Combining this with \eqref{eq:primal_difference_zero2}  further implies
\begin{align}
&\lim_{t\to\infty}\|x^{t+1}-\bar{x}^{t+1}\|= 0,\ \mbox{w.p.1.}\label{eq:primal_gap_zero}
\end{align}
Since $\bar{x}^{t+1}\in X (y^{t+1})$, we have $L(\bar{x}^{t+1}, y^{t+1})\le L(x, y^{t+1})$ for all $x\in X$. Passing limit, we have $L({x}^{\infty}, y^{\infty})\le L(x, y^{\infty})$ for all $x\in X$ w.p.1, where $({x}^{\infty}, y^{\infty})$ is a limit point of $\{{x}^t, y^t\}$. Combining this with \eqref{eq:y_difference}, we conclude that ${x}^{\infty}$ is a primal optimal solution satisfying ${x}^{\infty}\in X(y^{\infty})$ w.p.1.}

%Next we show that \eqref{eq:primal_gap_zero} and \eqref{eq:y_difference} guarantee
%\begin{align}
%\lim_{t\to\infty}\left(\Delta_p^{t+1}+\Delta_d^{t+1}\right)=0, \ \textrm{w.p.1.}
%\end{align}

{\bf Case 2)}: Suppose the stepsize is chosen according to \eqref{eq:step_size}.
%\begin{align}\label{eqStepSize}
%\sum_{t=1}^{\infty}\alpha^t=\infty, \ \lim_{t}\alpha^t=0.
%\end{align}
Then similar to Case 1,  we have the descent estimate given in \eqref{eq:descent}.

%\begin{align}\label{eqDescentExpectation2}
%&\mathbb{E}\left[(\Delta_p^{t}+\Delta_d^{t})-(\Delta_p^{t-1}+\Delta_d^{t-1})|z^t\right] \nonumber\\
%&\le
%(2\alpha^tp_0\|E\|^2\tau^{2}\hat{\sigma}_1^{2}-\hat{\gamma})\|\hx^{t+1}-x^t\|^2-\alpha^tp_0\|E\barhx^t-q\|^2
%+2(\alpha^t)^2p_0\|E\|^2\tau^{2}\hat{\sigma}^2_2\|q-Ex^t\|^2.
%\end{align}
The assumption $\alpha^t\to 0$ implies that there must exist an index $t_0$ such that for all $t>t_0$, the constants in front of $\|\hx^{t+1}-x^t\|^2$ and $\|E\barhx^{t+1}-q\|^2$ in \eqref{eq:descent} become negative. By applying the convergence theorem of non-negative almost supermartingale again, we conclude that
%
%again have $\mathbb{E}[\|\hx^{t+1}-x^t\|^2]\to 0$. By \eqref{eq:nice}, we have $\mathbb{E}[\|x^{t}-\bar{x}^t\|^2]\to 0$. Moreover, we have
\begin{align}
&\sum_{t=1}^{\infty}\|\hx^{t+1}-x^t\|^2<\infty, \ \mbox{w.p.1,}\label{eq:SumX}\\
&\sum_{t=1}^{\infty}\alpha^t\|E\barhx^{t+1}-q\|^2<\infty, \ \mbox{w.p.1}. \label{eq:SumE}
\end{align}
Eq.\ \eqref{eq:SumX} implies that
\begin{align}
\|\hx^{t+1}-x^t\|\to 0\quad \textrm{w.p.1.}\label{eq:primal_difference_zero_part2}
\end{align}
%{\textbf{Need more details here! }\color{black}Using the error bound condition in Lemma \ref{lm:eb}, we conclude that $\|\bar{x}^{t}-x^t\|\to 0$ w.p.1.}
while Eqs.\ \eqref{eq:SumE} and \eqref{eq:step_size} imply that
\begin{align}\label{eq:liminf}
\liminf_{t\to\infty}\|E\barhx^{t+1}-q\|^2=0, \ \mbox{w.p.1}.
\end{align}

To complete the proof, we show below that  in fact $\displaystyle\lim_{t\to\infty}\|E\barhx^{t+1}-q\|^2=0$ \ \mbox{w.p.1}.
Assume the contrary, so that there exists a $\delta>0$ such that
\begin{align}
\limsup_{t\to\infty}\|E\barhx^{t+1}-q\|^2=\delta>0. \label{eq:Contradiction}
\end{align}
Here and in what follows, all the statements hold in the almost sure sense, but we will omit the qualification ``w.p.1" for simplicity.
Using the Lipchitz continuity property of $\nabla d(y)$ (c.f., Lemma~\ref{lm:const-derivative}), we have
\begin{align}
\|E \barhx^{t+1}-q\|-\|E\barhx^t-q\|&\le \|(E\barhx^{t+1}-q)-(E\barhx^t-q)\|%\nonumber\\&
=\|\nabla d(\hy^{t+1})-\nabla d(\hy^t)\|\nonumber\\&
\le \frac{1}{\rho}\|\hy^{t+1}-\hy^t\|%\nonumber\\
%&= \frac{1}{\rho}\|\hy^{t+1}-y^{t}+y^{t}-\hy^t\|\nonumber\\&
{\le\frac{1}{\rho}\left(\|\hy^{t+1}-y^{t}\|+\|y^{t}-\hy^t\|\right)}\nonumber\\
&{\le\frac{1}{\rho}\left(\|\hy^{t+1}-y^{t}\|+\|y^{t-1}-\hy^t\|\right)}\nonumber\\
&=\frac{\alpha^{t}}{\rho}\|q-E x^{t}\|+{\frac{\alpha^{t-1}}{\rho}\|q-E x^{t-1}\|}. \label{eq:DifferentViolation}
\end{align}
%\textbf{Some index problem here? The red term is an upper bound for $\|y^{t-1}-\hy^t\|$, but ....}
%%Note that this series of inequalities is true no matter which block variable is updated in iteration $t$. If a primal block variable is updated, then $\bar{x}^{t+1}=\bar{x}^t$, and the above inequality is trivially true. If the dual variable is updated, then the last inequality becomes equality, but otherwise the overall inequality still holds true.
{Note that the second to the last inequality is true because if a primal variable is updated at iteration $t$, we have $\|y^{t}-\hy^t\|= \|y^{t-1}-\hy^t\|$; else we have $0=\|y^{t}-\hy^t\|\le\|y^{t-1}-\hy^t\|$.}
%Next we bound $\|q-E x^{t}\|$. By using a similar argument as in \eqref{eq:nice1} we can show that
%\begin{align}
%\|x^t-\barhx^{t}\|\le \tau\|\tilde{\nabla}L(x^t;\hat{y}^t)\|\le\tau\left(\tilde{\sigma}_1\|\hx^{t+1}-x^t\|+\tilde{\sigma}_2\alpha^{t-1}\|q-Ex^{t-1}\|\right).\label{eq:New}
%\end{align}
%where $\tilde{\sigma}_1>0$ and $\tilde{\sigma}_2>0$.
%Using \eqref{eq:New} and the fact that $\hy^{t+1}-y^t=\alpha^t(q-Ex^t)$, we obtain
%\begin{align}
%\|q-Ex^t\|&\le \|q- E\barhx^{t}\|+\|E(x^t-\barhx^{t})\|\le \|q- E\barhx^{t}\|+\|E\|\|x^t-\barhx^{t}\|\nonumber\\ \nonumber
%&\le \|q- E\barhx^{t}\|+\tau\|E\|(\tilde{\sigma}_1\|\hx^{t}-x^t\|+\tilde{\sigma}_2\alpha^{t-1}\|q-Ex^{t-1}\|).
%\end{align}
%It follows that when $t$ is large enough so that $\hat{\sigma}_2\alpha^t\le 1/2$, we have
%\begin{align}\label{eq:extra0}
%\|q-Ex^t\|\le 2\|q- E\barhx^{t+1}\|+2\tau\hat{\sigma}_1\|E\|\|\hx^{t+1}-x^t\|.
%\end{align}

{Now by \eqref{eq:liminf} and under the assumption \eqref{eq:Contradiction}, there must exist two infinite subsequences $\{t(n)\}$ and $\{t(p)\}$ such that
\begin{align}\label{eq:extra1}
&\|E\barhx^{t(n)}-q\|<\frac{\delta}{4}, \quad \|E\barhx^{t(n)+1}-q\|>\frac{\delta}{4}\nonumber\\
&\frac{\delta}{2}<\|E\barhx^{t(p)}-q\|, \quad \frac{\delta}{4}<\|E\barhx^{t}-q\|<\frac{\delta}{2},\ \ \forall \ t\in[t(n)+1,\ t(p)-1].
\end{align}
It follows from \eqref{eq:extra1} and \eqref{eq:DifferentViolation} that
\begin{align}
\frac{1}{4}\delta&<\|E\barhx^{t(p)}-q\|-\|E\barhx^{t(n)}-q\|\nonumber\\
&=\sum_{t=t(n)}^{t(p)-1}\left(\|E\barhx^{t+1}-q\|-\|E\barhx^{t}-q\|\right)\nonumber\\
%&\le \sum_{t=t(n)}^{t(p)-1}\frac{1}{\rho}\|\hy^{t+1}-\hy^{t}\|\nonumber\\
&\le \sum_{t=t(n)}^{t(p)-1}\left(\frac{\alpha^{t}}{\rho}\|q-E x^{t}\|+\frac{\alpha^{t-1}}{\rho}\|q-E x^{t-1}\|\right)\label{eq:BoundDifference}.
%&\le \sum_{t=t(n)}^{t(p)-1}\frac{\alpha^{t}}{\rho}\left(2\|q- E\barhx^{t{+1}}\|+2\tau\hat{\sigma}_1\|E\|\|\hx^{t+1}-x^t\|\right)\nonumber\\
%&\quad +\sum_{t=t(n)}^{t(p)-1}\frac{\alpha^{t-1}}{\rho}\left(2\|q- E\barhx^{t}\|+2\tau\hat{\sigma}_1\|E\|\|\hx^{t}-x^{t-1}\|\right)\label{eq:BoundDifference}.
\end{align}
%Due to the fact that $\|x^t-\hat{x}^{t+1}\|\to 0$, there must exist an index $t_1$ such that for all $t>t_1$, $\|x^{t-1}-\hat{x}^t\|\le\frac{\delta}{16\tau\|E\|\hat{\sigma}_1}$.

Due to the fact that $x^t$ lies in a compact set, there must exist a finite constant $\zeta>0$ such that $\|q-Ex^t\|\le \frac{\zeta}{16}\delta$ for all $t$. Combining this inequality with Eqs.\ \eqref{eq:extra1}-\eqref{eq:BoundDifference}, % and the fact that $\alpha^{t}\le\alpha^{t-1}$,
we conclude that for all $p$ and $n$ large enough,
\[
\frac{2\rho}{\zeta}<\sum_{t=t(n)}^{t(p)}{\alpha^{t-1}}.
\]
Since $\alpha^{t}\to0$, this further implies %it follows that when $p$ and $n$ are large enough there holds
\begin{align}
\frac{\rho}{\zeta}<\sum_{t=t(n)+1}^{t(p)}{\alpha^{t-1}},\quad \mbox{for large $p$ an $n$}.\label{eq:key}
\end{align}
From $\sum_{t}\alpha^t\|E\barhx^{t+1}-q\|^2<\infty$, we know that for any given $c>0$, there exist $p$ and $n$ large enough such that
\begin{align}\nonumber
\sum_{t=t(n)+1}^{t(p)}\alpha^{t-1}\|E\barhx^{t}-q\|^2\le c.
\end{align}
Since $\|E\barhx^{t}-q\|\ge \delta/4$ for $t\in[t(n)+1,t(p)-1]$ (see \eqref{eq:extra1}), it follows  that
\begin{align}\nonumber
\frac{\delta^2}{16}\sum_{t=t(n)+1}^{t(p)}\alpha^{t-1}< c.
\end{align}
%Notice that $\alpha^t\to0$ and $\|E\barhx^{t(n)}-q\|$ is bounded (by $\frac{\zeta}{8}\delta$), the above inequality further implies that for large enough $n$ and $p$,
%\[
%\frac{\delta^2}{16}\sum_{t=t(n)+1}^{t(p)-1}\alpha^{t-1}< c/2.
%\]
Let us set $c=\frac{\delta^2 \rho}{16 \zeta}$, then we have
\begin{align}\nonumber
\sum_{t=t(n)+1}^{t(p)}\alpha^{t-1}< \frac{\delta^2\rho}{16 \zeta}\times\frac{16}{\delta^2}=\frac{\rho}{\zeta},\ \ \mbox{ $\forall \ p,n$ large enough},
\end{align}}
which is a contradiction to \eqref{eq:key}. Hence, we must have
\begin{align}\nonumber
\limsup_{t\to\infty}\|E\barhx^t-q\|=0.
\end{align}
Combining with \eqref{eq:liminf}, we have
\begin{align}
\lim_{t\to\infty}\|E\barhx^t-q\|=0, \ \textrm{w.p.1}. \label{eq:dual_gap_zero_part2}
\end{align}
{Similar to the proof for part (1), using \eqref{eq:primal_difference_zero_part2} and \eqref{eq:dual_gap_zero_part2}, we conclude that with probability 1, every limit point of $\{x^{t}, y^t\}$ is a primal-dual optimal solution. }\QED

\begin{remark}
If either the objective function is strongly convex (i.e., $A$ is full column rank in Assumptions A(b)), or if $E$ is full column rank, then the augmented Lagrangian function $L(x;y)$ is strongly convex, implying that the error bound in Lemma~\ref{lm:eb} holds automatically and globally. In this case, Theorem~\ref{thm:main} holds  without Assumption A(b) nor the requirement of $X_k$ being polyhedral and  compact.
\end{remark}
\section{Unconstrained Convex Optimization}

In this section, we specialize the BSUM-M and the RBSUM-M methods to the unconstrained case. Since the linear coupling constraints are absent, the (randomized) BSUM-M reduces to the (randomize) BSUM algorithm, and stronger convergence results can be obtained.

\subsection{The BSUM Algorithm} \label{sub:BCD}
%When the coupling linear constraint $Ex=q$ is not present, the BSUM-M and RBSUM-M reduce to the conventional BSUM algorithm and the R-BSUM algorithm. More specifically,
Consider the following special case of problem \eqref{eq:1}
\begin{equation}\label{eq:bcd_problem}
\begin{array}{ll}
\mbox{minimize} &\displaystyle f(x):=g\left(x_1,\cdots, x_K\right)+\sum_{k=1}^{K}h_k(x_k)\\ [10pt]
\mbox{subject to}  & x_k\in X_k,\quad k=1,2,...,K.
\end{array}
\end{equation}
For each component $k$, define $u_k(\cdot; x)$: $X_k\mapsto \Re$ as a locally tight upper-bound for the smooth function $g(\cdot)$ at a given point $x\in X$. Below we will assume that $u_k(\cdot; x)$ satisfies Assumption B (with $\rho=0$).

The BSUM and R-BSUM algorithms are outlined in the following tables. Note that the BSUM-type algorithms described in this section are more general than the conventional BCD, in the sense that an approximation function $u_k(\cdot;\cdot)$ is used to update each component.
\begin{center}
\fbox{
\begin{minipage}{5.2in}
\smallskip
\centerline{\bf Block Successive Upper-bound Minimization (BSUM)}
\smallskip
At each iteration $r\ge 1$:
\begin{equation}\label{eq:BCD}
\begin{array}{l}\displaystyle
x_k^{r+1}={\rm arg}\!\min_{x_k\in X_k}u_k(x_k; w_k^{r+1})+h_k(x_k),~k=1,\cdots, K.\\[10pt]
\end{array}
\end{equation}
\end{minipage}
}
\end{center}

\begin{center}
\fbox{
\begin{minipage}{5.2in}
\smallskip
\centerline{\bf Randomized BSUM (R-BSUM)}
\smallskip
Select a probability vector $\{p_k\}_{k=1}^{K}$ such that $p_k>0$ and $\sum_{k=1}^{K}p_k=1$.\\
At each iteration $t\ge 1$, pick an index $k\in \{1,\cdots, K\}$, with probability $p_k$
\begin{equation}\label{eq:R-BCD}
\begin{array}{l}\displaystyle
\quad x_k^{t+1}={\rm arg}\!\min_{x_k\in X_k}u_k(x_k;  x^t)+h_k(x_k),\\[10pt]
\quad x_j^{t+1}=x_j^t,\ \forall~j\ne k. \\[10pt]
\end{array}
\end{equation}
\end{minipage}
}
\end{center}
%
%Let us make the following assumption on the approximation function $u_k(\cdot;\cdot)$.
%\pn {\bf Assumption C.}
%\begin{itemize}
%\item [(a)]  $u_k(x_k; x) = g(x), \quad \forall\; x\in {X}, \ \forall\; k.$
%\item [(b)] $u_k(z_k; x)\geq g\left([x_{-k}, z_k] \right),\quad\; \forall\; z_k \in {X}_k, \ \forall\; x \in{X}, \ \forall\; k.$
%\item [(c)] $\nabla u_k(x_k;x)= \nabla_{x_k}(g(x)), \quad \forall\; k, \ \forall\; x\in X$
%\item [(d)] For given $x\in X$, $u_k(z_k; x)$ is continuous and strongly convex in $X_k$, that is
%$$u_k(z_k; x)\ge u_k(\hat{z}_k; x)+\langle\nabla u_k(\hat{z}_k; x),z_k-\hat{z}_k\rangle+\frac{\gamma_k}{2}\|z_k-\hat{z}_k\|^2,\ \forall~z_k, \ \hat{z}_k\in X_k, \ \forall~x\in X$$
%where $\gamma_k$ is independent of the choice of $x$.
%\item [(e)] For given $x\in X$, $u_k(z_k; x)$ has Lipchitz continuous gradient, that is
%\begin{align}
%\|\nabla u_k(z_k; x)-\nabla u_k(\hat{z}_k; x)\|\le L_k\|z_k-\hat{z}_k\|,\ \forall\ \hat{z}_k,\ z_k\in X_k, \ \forall
%\ k, \ \forall~x\in X,
%\end{align}
%where $L_k>0$ is some constant.
%\end{itemize}
%Clearly Assumption C is a specialization of Assumption B for problem \eqref{eq:bcd_problem}.

\subsection{Linear Convergence of the BSUM Algorithm} \label{sub:BCDLinear}

In this section, we show that under similar assumptions given in Section \ref{sub:assumptions}, both BSUM and R-BSUM converge linearly. For the BSUM algorithm, define the optimality gap as $\Delta^r:=f(x^{r+1})-f({x}^*)$, where ${x}^*\in X^*$ is an optimal solution. Similarly, for the R-BSUM algorithm, define $\Delta^t:=f(x^{t+1})-f({x}^*)$.

We first note that these algorithms indeed converge. This is a consequence of Theorem \ref{thm:main} (just ignore the linear constraints).

%\begin{corollary}\label{cor:bcd_sufficient_decrease}
%Assume Assumption B holds true. Then
%\begin{align}
%&f(x^{r+1})-f(x^{r})=\Delta^r-\Delta^{r-1}\le  -\gamma\|x^{r+1}-x^{r}\|^2\label{eq:bcd_sufficient_decrease}\\
%&\mathbb{E}\left[f(x^{t+1})-f(x^{t})\mid x^t\right]=\mathbb{E}\left[\Delta^t-\Delta^{t-1}\mid x^t\right]\le -\hat{\gamma}\|\hat{x}^{t+1}-x^{t}\|^2\label{eq:sbcd_sufficient_decrease}
%\end{align}
%for some $\gamma=\min_k\gamma_k>0$, $\hat{\gamma}=\min_k p_k\gamma_k>0$.
%\end{corollary}
%
%\begin{corollary} \label{cor:estimate_bcd}
%{Suppose assumptions A(b) and Assumption B hold. Then
%\begin{enumerate}
%\item For the iterates $\{x^r\}$ generated by the BCD, there exists some constant
%$\sigma>0$ such that
%\begin{equation}\label{eq:size_prox_bcd}
%\|\tilde\nabla f(x^r)\|\le \sigma\|x^{r+1}-x^r\|,\quad \mbox{for all } r\ge 1.
%\end{equation}
%\item For the iterates $\{x^t\}$ generated by the R-BCD, there exists some constant
%$\hat{\sigma}>0$ such that
%\begin{equation}\label{eq:size_prox_sbcd}
%\|\tilde\nabla f(x^t)\|\le \hat{\sigma}\|\hx^{t+1}-x^t\|,\quad\mbox{for all } t\ge 1.
%\end{equation}
%\end{enumerate}
%}
%\end{corollary}

\begin{corollary}\label{cor:bcd}
Suppose Assumptions A(a) and B hold. Then we have the following:
\begin{enumerate}
\item For the BSUM, the sequences
$\{\Delta^r\}$ and $\{\|x^r-x^{r+1}\|\}$ both converge to zero. Further, every limit point of $\{x^r\}$ is an optimal solution for problem \eqref{eq:bcd_problem}.
\item For the R-BSUM, the sequences
$\{\Delta^t\}$ and $\{\|x^t-x^{t+1}\|\}$ both converge to zero w.p.1. Further, every limit point of $\{x^t\}$ is an optimal solution for problem \eqref{eq:bcd_problem} w.p.1.
\end{enumerate}
\end{corollary}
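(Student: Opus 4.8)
The plan is to obtain Corollary~\ref{cor:bcd} by specializing Theorem~\ref{thm:main} to the degenerate case in which the coupling constraint is absent. Formally I would take $E=0$, $q=0$ and $\rho=0$ in problem~\eqref{eq:1}, so that the augmented Lagrangian collapses to $L(x;y)=f(x)$ for every $y$, the dual ascent step $y^{r+1}=y^r+\alpha^r(q-Ex^r)$ becomes vacuous, and the primal subproblems in \eqref{eq:BSUM-M} and \eqref{eq:RBSUM-M} reduce \emph{verbatim} to the BSUM update \eqref{eq:BCD} and the R-BSUM update \eqref{eq:R-BCD}, respectively. Under this identification the residual $\|Ex^r-q\|$ is identically zero, the dual function $d(y)\equiv f^*$ is constant, and the primal solution set $X(y)=\arg\min_x f(x)=X^*$ no longer depends on $y$, so $\bar x^r$ is merely the projection of $x^r$ onto $X^*$; moreover the per-block approximations $u_k(\cdot\,;\cdot)$ taken at $\rho=0$ are precisely those hypothesized to satisfy Assumption~B. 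With these reductions the three limits of Theorem~\ref{thm:main}(1) specialize to the trivial $\|Ex^r-q\|\to 0$, the asserted $\|x^r-x^{r+1}\|\to 0$, and $\dist(x^r,X^*)=\|x^r-\bar x^r\|\to 0$, with every limit point primal optimal (dual optimality being vacuous). Since the iterates stay in $X\cap\mathrm{dom}\,f$ and $f$ is closed proper convex, $\|x^r-\bar x^r\|\to 0$ with $f(\bar x^r)=f^*$ then yields $f(x^r)\to f^*$ by continuity, i.e.\ $\Delta^r\to 0$. Part~(2) is identical, reading every limit as its almost-sure counterpart in Theorem~\ref{thm:main}(2).

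The main obstacle is a genuine hypothesis mismatch: Theorem~\ref{thm:main} is stated under ``the error bound in Lemma~\ref{lm:eb} and Assumption~B'', but Lemma~\ref{lm:eb} requires Assumption~A(b), which the corollary drops. I would resolve this by noting that inside the proof of Theorem~\ref{thm:main} the error-bound / proximal-gradient machinery is invoked \emph{only} to control the constraint residual and the dual gap (through the chain \eqref{eq:bds}, Lemma~\ref{lm:estimate} and \eqref{eq:nice1}), both of which are now identically zero; hence it is not needed for the conclusions we want, and those can be re-derived directly. Concretely, Lemma~\ref{lm:p-descent}(1) at $\rho=0$ gives the sufficient-decrease inequality $f(x^r)-f(x^{r+1})\ge\gamma\|x^r-x^{r+1}\|^2$. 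Because the minimum $f^*$ is attained (Assumption~A(a)), the sequence $\{f(x^r)\}$ is nonincreasing and bounded below, hence convergent, and telescoping yields $\sum_r\|x^r-x^{r+1}\|^2<\infty$, so $\|x^r-x^{r+1}\|\to 0$. For R-BSUM I would instead use Lemma~\ref{lm:p-descent}(2) at $\rho=0$, $p_0=0$, obtaining $\mathbb{E}[f(x^t)-f(x^{t+1})\mid x^t]\ge\hat\gamma\|x^t-\hat x^{t+1}\|^2$, and then apply the Robbins--Siegmund supermartingale convergence theorem \cite[Theorem~1]{Robbins71} to get $\sum_t\|x^t-\hat x^{t+1}\|^2<\infty$ w.p.1; since $x^{t+1}-x^t$ has a single nonzero block equal to the corresponding block of $\hat x^{t+1}-x^t$ (cf.\ \eqref{eq:defxhat}), this gives $\|x^t-x^{t+1}\|\to 0$ w.p.1.

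The remaining delicate step is upgrading $\|x^r-x^{r+1}\|\to 0$ to optimality of the limit points and thence to $\Delta^r\to 0$. Along a convergent subsequence $x^{r_j}\to x^\infty$, the vanishing successive differences force each intermediate iterate $w_k^{r_j+1}$ to converge to $x^\infty$ as well; passing to the limit in the defining inequality of the $x_k$-subproblem, and using continuity of $u_k(\cdot;\cdot)$ (Assumption~B(d)) together with the first-order consistency $\nabla u_k(x_k;x)=\nabla_k g(x)$ of Assumption~B(c) at $\rho=0$, I would conclude that $x_k^\infty$ minimizes $u_k(\cdot;x^\infty)+h_k(\cdot)$ over $X_k$ for every $k$, i.e.\ $0\in\nabla_k g(x^\infty)+\partial h_k(x_k^\infty)+N_{X_k}(x_k^\infty)$. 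Since $h=\sum_k h_k$ and $X=\prod_k X_k$ are separable, these block conditions assemble into the global stationarity of the convex $f$ over $X$, which is exactly optimality, so $f(x^\infty)=f^*$. Finally, monotone convergence of $\{f(x^r)\}$ combined with $f(x^{r_j})\to f(x^\infty)=f^*$ along the subsequence forces the whole sequence to obey $\Delta^r\to 0$; the R-BSUM case is the same almost surely. I would flag that this last closure tacitly uses the existence of a limit point, which is guaranteed once the relevant sublevel set is bounded (e.g.\ under the compactness in Assumption~A(c)); without it one retains $\|x^r-x^{r+1}\|\to 0$ and the conditional optimality of limit points, and must either assume boundedness or reinstate the error bound of Lemma~\ref{lm:eb} to conclude $\dist(x^r,X^*)\to 0$ directly.
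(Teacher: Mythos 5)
Your proposal matches the paper's approach: the paper dispenses with this corollary in one line (``a consequence of Theorem~\ref{thm:main}, just ignore the linear constraints''), and its subsequent remark confirms your diagnosis that the error bound of Lemma~\ref{lm:eb} (hence Assumptions~A(b)--(c)) is not needed because only the primal descent from Lemma~\ref{lm:p-descent} is involved. Your filled-in argument --- sufficient decrease, telescoping, Robbins--Siegmund for the randomized case, and passing to the limit in the block subproblems --- is the correct way to make the paper's one-liner rigorous, and your flag about needing boundedness of the iterates to guarantee limit points and conclude $\Delta^r\to 0$ is a legitimate subtlety the paper glosses over.
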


\begin{remark}
The conditions used in Corollary \ref{cor:bcd} are slightly stronger than those for the original BSUM algorithm \cite{Razaviyayn12SUM}. In particular, here we require that the per-block upper-bound function $u_k(v_k; x)$ is strongly convex with respect to $v_k$, while in \cite[Theorem 2(a)]{Razaviyayn12SUM}, it is only assumed that $u_k(v_k; x)$ is quasi-convex, and that problem $\min_{v_k\in X_k} u_k(v_k; x)$ has a unique optimal solution. The per-block strong convexity is needed here to show part $(2)$ of Corollary \ref{cor:bcd}.
\end{remark}

\begin{remark}
Different from the proof of Theorem \ref{thm:main}, Corollary \ref{cor:bcd} does not require Assumption {\rm A(b)-(c)}. Such assumptions are needed in Theorem \ref{thm:main} to invoke the error bound property \eqref{eq:bds}, which in turn is used to establish the key descent property of the combined primal and dual gaps $($cf.\ \eqref{eq:descent}$)$. In contrast, the analysis of BSUM/R-BSUM only involves the primal gaps, whose descent is guaranteed by the algorithms. The error bound, however, is needed below to establish linear convergence.
\end{remark}

To show linear convergence of these algorithms, we need an additional result that bounds the size of the optimality gap.

\begin{lemma}\label{lm:cost-to-go}
We have the following estimate of the optimality gaps.
\begin{enumerate}
\item For the BSUM, suppose Assumption A(a) and Assumption B hold. Then there exist positive scalars $\zeta$ and $\zeta^{'}$
$($independent of $y^r)$ such that
\begin{align}\label{eq:p-gap-bcd}
\Delta^r\le \zeta\|x^{r+1}-x^r\|^2+\zeta^{'}\|x^r-\bar{x}^r\|^2,\quad\mbox{for all $r\ge1$.}
\end{align}

\item For the R-BSUM, suppose Assumptions A and B hold. Additionally, assume that $h_k(x_k)=\lambda_k\|x_k\|_1$ for any $\lambda_k\ge 0$, and that $C_k$ is full row rank for each $k$. Then there exists a finite $t_0>0$ and  a positive scalar $\hat{\zeta}^{'}$ $($independent of $y^r)$ such that
\begin{equation}\label{eq:p-gap-sbcd}
\mathbb{E}\left[\Delta^t\mid x^{t}\right]\le \hat{\zeta}^{'}\|x^t-\bar{x}^t\|^2,\quad\mbox{for all $t\ge t_0$.}
\end{equation}
\end{enumerate}
\end{lemma}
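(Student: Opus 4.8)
The plan is to prove the two cost-to-go estimates by relating the optimality gap $\Delta^r$ (or $\Delta^t$) to the proximal gradient of the objective and then invoking the error bound of Lemma~\ref{lm:eb} together with the descent properties already established. For part (1), I would first use convexity of $f$ to write $\Delta^r = f(x^{r+1})-f(x^*)$, and bound it by $\langle \tilde\nabla f(x^r), x^r - x^* \rangle$ plus lower-order terms, using the Lipschitz bound on the smooth part $g$ and the structure $f=g+h$. The key is that since the linear constraints are absent (so $\rho=0$ and $y$ plays no role), the augmented Lagrangian $L(x;y)$ reduces to $f(x)$ itself, and $\bar{x}^r$ is simply the projection of $x^r$ onto the optimal solution set $X^* = X(y^r)$. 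I would then control $\|x^r - x^*\|$ by $\|x^r-\bar{x}^r\|$ (choosing $x^* = \bar{x}^r$), and control the proximal gradient $\|\tilde\nabla f(x^r)\|$ by $\|x^{r+1}-x^r\|$ via Lemma~\ref{lm:estimate} (specialized to $\rho=0$). Combining these gives the two-term bound \eqref{eq:p-gap-bcd}.

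For part (2) the subtlety is that R-BSUM updates only a \emph{single randomly chosen} block per iteration, so $x^{t+1}-x^t$ has only one nonzero block and the per-iteration descent is too weak to directly bound the gap by $\|x^{t+1}-x^t\|^2$. Instead the statement bounds $\mathbb{E}[\Delta^t\mid x^t]$ purely in terms of $\|x^t-\bar{x}^t\|^2$, so I would work with the full virtual update $\hat{x}^{t+1}$ (the vector of all block-minimizers from \eqref{eq:defxhat}) rather than the actual single-block update. The plan is to take conditional expectation, write $\mathbb{E}[\Delta^t\mid x^t] = f(x^t)-f^* $ using the fact that $\Delta^t$ involves $x^{t+1}$, and bound $f(x^t)-f^*$ via the error bound $\|x^t-\bar{x}^t\|\le \tau\|\tilde\nabla f(x^t)\|$ combined with a cost-to-go lemma analogous to Lemma~\ref{lm:estimate}. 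The additional hypotheses that $h_k(x_k)=\lambda_k\|x_k\|_1$ and $C_k$ full row rank are needed to guarantee the local error bound holds on the relevant region and to control the geometry of the active constraints; I would use these to argue that for $t\ge t_0$ the iterate enters a neighborhood where these bounds are uniform, explaining the appearance of the threshold $t_0$.

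The main obstacle is part (2): isolating a clean bound $\mathbb{E}[\Delta^t\mid x^t]\le \hat\zeta'\|x^t-\bar x^t\|^2$ \emph{without} a $\|x^{t+1}-x^t\|^2$ term, since the per-block descent naturally produces such a term. The trick is to absorb the movement term into $\|x^t-\bar x^t\|^2$ using the error bound in reverse: because $\bar x^t$ minimizes $f$ over the optimal set at the appropriate dual variable and $C_k$ is full row rank, one can bound the block-minimizer displacement $\|\hat x^{t+1}-x^t\|$ itself by a multiple of $\|x^t-\bar x^t\|$. This is where the full-row-rank and $\ell_1$ assumptions do real work, providing a one-sided Lipschitz-type estimate on the per-block proximal map that ties the subproblem solution back to the distance to the optimal set. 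I expect establishing this displacement-to-distance bound, and verifying it holds uniformly once $t\ge t_0$, to be the technically hardest step, whereas the convexity manipulations and the invocation of Lemma~\ref{lm:estimate} and Lemma~\ref{lm:eb} should be routine.
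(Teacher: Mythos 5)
Your sketch of part (1) is serviceable, but part (2) has a genuine gap, and it lies exactly where you have placed your confidence. For a nonsmooth objective $f=g+h$, expanding $f(x^t)-f(\bar x^t)$ via the mean value theorem on $g$ gives $\langle\nabla g(\bar x^t),x^t-\bar x^t\rangle+h(x^t)-h(\bar x^t)$ plus an $O(\|x^t-\bar x^t\|^2)$ remainder. That first-order term is nonnegative but is generically of order $\|x^t-\bar x^t\|$, not its square: take $h(v)=|v|$, $\nabla g\equiv 0$, $x^t=\epsilon$, $\bar x^t=0$, so $f(x^t)-f^*=\epsilon=\|x^t-\bar x^t\|$ and no bound of the form $\hat\zeta'\epsilon^2$ can hold for small $\epsilon$. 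Consequently, no error-bound or Lipschitz manipulation, and in particular no ``displacement-to-distance'' estimate $\|\hat x^{t+1}-x^t\|\le C\|x^t-\bar x^t\|$, can deliver \eqref{eq:p-gap-sbcd}: the inequality is simply false for generic non-optimal points and holds only for the actual R-BSUM iterates once $t$ is large. (Also, $\mathbb{E}[\Delta^t\mid x^t]=\sum_k p_k\bigl(f(x^t_{-k},\hat x^{t+1}_k)-f^*\bigr)\le f(x^t)-f^*$ is an inequality, not the equality you wrote, though that is minor.)

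The paper's proof supplies the missing idea: it shows that for $t\ge t_0$ (w.p.\ 1) the first-order term $\langle\nabla g(\bar x^t),x^t-\bar x^t\rangle+h(x^t)-h(\bar x^t)$ is \emph{exactly zero}, via a finite-identification argument. Writing the per-block prox optimality system \eqref{eq:per_step_opt} with error terms $e^t_k\to 0$, using the full row rank of $C_k$ to keep the multipliers $\mu^t_k$ bounded, and passing to a subsequence along which the active set $\cJ^t$ stabilizes, one shows that $x^t_k$ eventually acquires the same sign pattern as a limit point $x^\infty\in X^*$ --- in particular $x^t_k[i]=0$ exactly on the set $\cI^{=}_k$ where $|[\nabla_k g(x^\infty)+C_k^T\mu^\infty_k]_i|<\lambda_k$. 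A direct computation using this sign pattern and complementarity then gives $\langle\nabla g(x^\infty),x^t-x^\infty\rangle+h(x^t)-h(x^\infty)=0$, and the constancy of $\nabla g$ over $X^*$ (Assumption A(b)) transfers the identity to $\bar x^t$. This is where the $\ell_1$ form of $h_k$ and the full-row-rankness of $C_k$ do their real work, and it is the source of the threshold $t_0$. Note also that the cyclic case escapes this difficulty for a different reason than you suggest: the bound \eqref{eq:p-gap-bcd} retains a $\|x^{r+1}-x^r\|^2$ term precisely because after a full sweep \emph{every} block satisfies an exact optimality condition at a point within $O(\|x^{r+1}-x^r\|)$ of $x^{r+1}$, whereas in the randomized scheme the stale blocks (as in the toy example above) genuinely break the quadratic bound unless identifiability is invoked.
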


\proof We only show part (2) of the proof. Part (1) of the proof is much simpler, and can be found in  \cite[Lemma 3.1]{HongLuo2012ADMM}.

From the mean value theorem, for any $k$ there exists some
$\tilde{x}^{t}$ in the line segment joining $x^{t}$ and $\bar{x}^t$
such that
\[g(x^{t})-g(\bar{x}^t)=\langle \nabla g(\tilde{x}^{t}), x^{t}-\bar{x}^t\rangle. \]

The conditional expected value of $\Delta^t$ can be bounded above by
\begin{align}
\mathbb{E}[\Delta^t\mid x^t]&=\sum_{k=1}^{K}p_k\left(f(x^t_{-k}, \hx^{t+1}_k)-f(\bar{x}^t)\right)\nonumber\\
&\le\sum_{k=1}^{K}p_k\left(f(x^t)-f(\bar{x}^t)\right)\nonumber\\
&=\left\langle\nabla g(\tilde{x}^t), x^t-\bar{x}^t\right\rangle+h(x^t)-h(\bar{x}^t) \nonumber\\
&=\left\langle\nabla g(\tilde{x}^t)-\nabla g(\bar{x}^t), x^t-\bar{x}^t\right\rangle+\left\langle\nabla g(\bar{x}^t), x^t-\bar{x}^t\right\rangle+h(x^t)-h(\bar{x}^t)\nonumber\\
&\le  L \|\tilde{x}^t-\bar{x}^t\|\| x^t-\bar{x}^t\|+\left\langle\nabla g(\bar{x}^t), x^t-\bar{x}^t\right\rangle+h(x^t)-h(\bar{x}^t)\nonumber\\
&\le  L \| x^t-\bar{x}^t\|^2+{\color{black}\left\langle\nabla g(\bar{x}^t), x^t-\bar{x}^t\right\rangle+h(x^t)-h(\bar{x}^t)}\nonumber
\end{align}
where the last inequality comes from the fact that $\tilde{x}^{t}$ lies in the line segment joining $x^{t}$ and $\bar{x}^t$. In the following, we will show that when $t$ is large enough, with probability 1 we have  $\left\langle\nabla g(\bar{x}^t), x^t-\bar{x}^t\right\rangle+h(x^t)-h(\bar{x}^t)=0$.

We first observe that $\bar{x}^t$ satisfies
\begin{align}\noindent\nonumber
\langle\nabla g(\bar{x}^t)+\partial h(\bar{x}^t), x-\bar{x}^t\rangle\ge 0,\ \forall~x\in X.
\end{align}
This implies that
\begin{align}\noindent\nonumber
h(x)-h(\bar{x}^t)+\langle\nabla g(\bar{x}^t), x-\bar{x}^t\rangle\ge 0, \ \forall~x\in X.
\end{align}
Assumption A(b) implies that  $\nabla g(x^*)$  takes the same value for any $x^*\in X^*$. Let us denote
\begin{equation}\label{eq:a}
\nabla g(x^*)=a^*,\quad \forall\ x^*\in X^*.
\end{equation}
Then for any two optimal solutions $x^*, x^{\infty}\in X^*$, we have
\begin{align}
h(x^*)-h(x^{\infty})+\langle\nabla g({x}^\infty), x^*-{x}^{\infty}\rangle\ge 0,\nonumber\\
h(x^\infty)-h(x^{*})+\langle\nabla g({x}^*), x^{\infty}-{x}^{*}\rangle\ge 0\nonumber.
\end{align}
Using the fact that $\nabla g({x}^*)=\nabla g({x}^\infty)$, we conclude
\begin{align}
h(x^\infty)+\langle\nabla g(x^{\infty}),x^{\infty}\rangle&=h(x^*)+\langle\nabla g(x^{\infty}),x^{*}\rangle\nonumber\\
&=h(x^*)+\langle\nabla g(x^{*}),x^{*}\rangle, \ \forall~x^*, x^{\infty}\in X^*.\label{eq:optimal_equivalence}
\end{align}

The main part of the proof is to show that for $t$ large enough, there exists an $x^{\infty}\in X^*$ such that
\begin{align}
\left\langle\nabla g({x}^\infty), x^t-{x}^\infty\right\rangle+h(x^t)-h({x}^\infty)=0. \label{eq:target}
\end{align}
If the above relation is true, then utilizing \eqref{eq:optimal_equivalence}, we can conclude that $\left\langle\nabla g(\bar{x}^t), x^t-\bar{x}^t\right\rangle+h(x^t)-h(\bar{x}^t)=0$.

For any given block $k$ and any given iteration index $t>0$, let $t(k)$ denote the last iteration such that $x^t_k$ has been updated, i.e.,
$t(k):=\max\{j \mid j<t, x_k^{j}\ne x_k^t\}.$ Then according to the way that $x_k$ is updated, we have
\begin{align}
x^t_k&=x^{t(k)+1}_k=\prox\left[x^{t(k)+1}_k-\nabla u_k\left(x^{t(k)+1}_k; x^{t(k)}\right)\right]\nonumber\\
&=\prox\left[x^{t-1}_k-\nabla_kg\left(x^{t-1}\right)+e^t_k\right] \label{eq:sbcd_prox}
\end{align}
where we have defined $e^t_k$ as
\begin{align}\nonumber
e^t_k:&=\nabla_kg\left(x^{t-1}\right)-\nabla u_k\left(x^{t(k)+1}_k; x^{t(k)}\right)+x^{t(k)+1}_k-x_k^{t-1}.
\end{align}
Clearly the norm of the error term $e^t_k$ is bounded by
\begin{align}
\|e^t_k\|&=\left\|\nabla_kg\left(x^{t-1}\right)-\nabla u_k\left(x^{t(k)+1}_k; x^{t(k)}\right)+x^{t(k)+1}_k-x_k^{t-1}\right\|\nonumber\\
&=\left\|\nabla u_k\left(x^{t(k)}_k; x^{t(k)}\right)-\nabla u_k\left(x^{t(k)+1}_k; x^{t(k)}\right)+\nabla_kg\left(x^{t-1}\right)-\nabla u_k\left(x^{t(k)}_k; x^{t(k)}\right)+x^{t}_k-x_k^{t-1}\right\|\nonumber\\
&\le L_k\|x^{t(k)}_k-x^{t(k)+1}_k\|+\left\|\nabla_kg\left(x^{t-1}\right)-\nabla u_k\left(x^{t(k)}_k; x^{t(k)}\right)\right\|+\|x^{t}_k-x_k^{t-1}\|\nonumber\\
&=L_k\|x^{t(k)}_k-x^{t(k)+1}_k\|+\left\|\nabla_kg\left(x^{t-1}\right)-\nabla_kg\left(x^{t(k)}\right)\right\|
+\|x^{t}_k-x_k^{t-1}\|.\nonumber
\end{align}
The fact that $p_k$ is bounded away from $0$ for all $k$ implies that if $t\to\infty$, then $t(k)\to \infty$ for all $k$ w.p.1. {\color{black}Thus, using the results in the second part of Corollary \ref{cor:bcd}}, we have that both $x^{t(k)}$ and $x^{t-1}$ converge to the set of $X^*$ w.p.1 (though they may have different limit points), and {\color{black}that $\|x^{t(k)}_k-x^{t(k)+1}_k\|\to 0$ and $\|x^{t}_k-x^{t-1}_k\|\to 0$ w.p.1}. These results imply that
\begin{align}\nonumber
\lim_{t\to\infty}\|e^t_k\|&=\lim_{t\to\infty}L_k\|x^{t(k)}_k-x^{t(k)+1}_k\|+\left\|\nabla_kg\left(x^{t-1}\right)-\nabla_kg\left(x^{t(k)}\right)\right\|+\|x^{t}_k-x_k^{t-1}\|\nonumber\\
&=0+\|a_k^*-a_k^*\|+0=0,
\end{align}
where $a^*$ is defined by \eqref{eq:a}.

Next we show \eqref{eq:target}. To proceed, we need a few new definitions. Let $C_k[j]$ denote the $j$th row of the matrix $C_k$, let $c_k[j]$ denote the $j$th element of the vector $c_k$. Define $\cI_k$ as the set of indices contained in $x_k$; $\cJ^t_k$ as the set of indices of active constraints for block $k$ at iteration $t$: $\cJ^t_k:=\{j: C_k[j] x^t_k=c_k[j]\}$.  Eq.\ \eqref{eq:sbcd_prox} and the fact $h_k(x_k)=\lambda_k\|x_k\|_1$ imply that the optimality condition for block variable $k$ at iteration $t$ is given by
%\begin{align}\label{eq:per_step_opt_1}
%\left\{
%\begin{array}{l}
%x^t_k-x^{t-1}_k+\nabla_kg(x^{t-1})-e^t_k+\lambda_k\partial\|x^t_k\|_1+C^T_k\mu^t_k=0\\
%C_k[j]x^t_k=c_k[j],\ \forall~i\in \cJ^t_k, \ \mu^t_k[j]=0,\ \forall~j\notin\cJ^t_k,\ \mu^t_k\ge 0
%\end{array}
%\right.
%\end{align}
%where $\mu^t_k$ is the Lagrangian multiplier associated with the constraint $C_k x^t_k\le c_k$. Using the definition of the subgradient of $\|\cdot\|_1$, eq. \eqref{eq:per_step_opt_1} is equivalent to the following
\begin{align}\label{eq:per_step_opt}
\left\{
\begin{array}{l}
-\lambda_k\le\left[x^t_k-x^{t-1}_k+\nabla_kg(x^{t-1})+C^T_k\mu^t_k-e^t_k\right]_i\le \lambda_k, \ \forall~i\in \cI_k\\
x^t_k[i]\ge 0\ \textrm{if} \ [x^t_k-x^{t-1}_k+\nabla_kg(x^{t-1})+C^T_k\mu^t_k-e^t_k]_i=-\lambda_k\\
x^t_k[i]\le 0\ \textrm{if} \ [x^t_k-x^{t-1}_k+\nabla_kg(x^{t-1})+C^T_k\mu^t_k-e^t_k]_i=\lambda_k\\
x^t_k[i]= 0\ \textrm{if} \ -\lambda_k<[x^t_k-x^{t-1}_k+\nabla_kg(x^{t-1})+C^T_k\mu^t_k-e^t_k]_i<\lambda_k\\
\ C_k[j]x^t_k=c_k[j],\ \forall~i\in \cJ^t_k, \ \mu^t_k[j]=0,\ \forall~j\notin\cJ^t_k,\ \mu^t_k\ge 0
\end{array}
\right.
\end{align}
where the notation $[\cdot]_i$ denotes the $i$th element of a vector.

%Similarly, at global optimality, the $k$th block variable, say $x^*_k$, must satisfy
%%\begin{align}
%%\left\{
%%\begin{array}{l}
%%\nabla_kg(x^*)+C^T_k\mu^*_k=-\lambda_k\partial\|x^*_k\|_1\\
%%C_k[i]x^*_k=c_k[i],\ \forall~i\in \cJ^*_k, \ \mu^*_k[i]=0,\ \forall~i\notin\cJ^*_k,\ \mu^*_k\ge 0
%%\end{array}
%%\right.
%%\end{align}
%%Equivalently, using the definition of $\partial\|x^*_k\|_1$, we have
%\begin{align}
%\left\{
%\begin{array}{l}
%-\lambda_k\le\left[\nabla_kg(x^*)+C^T_k\mu^*_k\right]_i\le \lambda_k, \ \forall~i\in \cI_k\\
%x^*_k[i]\ge 0\ \textrm{if} \ [\nabla_kg(x^*)+C^T_k\mu^*_k]_i=-\lambda_k\\
%x^*_k[i]\le 0\ \textrm{if} \ [\nabla_kg(x^*)+C^T_k\mu^*_k]_i=\lambda_k\\
%x^*_k[i]= 0\ \textrm{if} \ -\lambda_k<[\nabla_kg(x^*)+C^T_k\mu^*_k]_i<\lambda_k\\
%\ C_k[i]x^*_k=c_k[i],\ \forall~i\in \cJ^*_k, \ \mu^*_k[i]=0,\ \forall~i\notin\cJ^*_k,\ \mu^*_k\ge 0
%\end{array}
%\right..
%\end{align}
%where $\mu^*_k$, $\cJ^*_k$ are defined similarly as above.
%
%For a given solution $x^*\in X^*$, define the index sets $\{\cI^+_k(x^*)\}_{k}$, $\{\cI^-_k(x^*)\}_{k}$ and $\{\cI^=_k(x^*)\}_{k}$ as follows
%\begin{align}\label{eq:defI}
%\left\{
%\begin{array}{l}
%\cI^{+}_k(x^*)=\left\{i: [\nabla_kg(x^*)+C^T_k\mu^*_k]_i=-\lambda_k, i\in \cI_k\right\}\\
%\cI^{-}_k(x^*)=\left\{i: [\nabla_kg(x^*)+C^T_k\mu^*_k]_i=\lambda_k, i\in \cI_k\right\}\\
%\cI^{=}_k(x^*)=\left\{i: \left|[\nabla_kg(x^*)+C^T_k\mu^*_k]_i\right|<\lambda_k, i\in \cI_k\right\}\\
%\end{array}
%\right..
%\end{align}

Define $\cJ^t$ as the index set of active constraints at iteration $t$:
$$\cJ^t:=\bigcup_{k=1}^{K} \cJ^t_k=\bigcup_{k=1}^{K} \{j: j\in \cI_k, C_k[j] x_k^t=c_k[j], k=1,\cdots, K\}.$$
Since there are only a finite number of distinct choices of $\cJ^t$, it follows that there exists some $\cJ^\infty$ such that $\cJ^t=\cJ^\infty$ for an infinite number of $t$.
%For any such index set $\cJ$, define the set of iteration indices
%$$\cT^\cJ:=\{t\mid t\in\{1, 2,\cdots\}, \cJ^t=\cJ\}.$$
%Then for any given $\cJ$ such that $\cT^\cJ$ is infinite, any $x^{s}$ and $x^t$ with $t,s\in \cT^{\cJ}$ must have the same set of active constraints.
Due to the compactness of $X_k$, the full rankness of $C^T_k$, and the fact that $e^t\to 0$, it follows that for sufficiently large $t$, $x^t_k$ and $\mu^t_k$ are both bounded. By further passing to a subsequence $\cT$ if necessary, we can assume that
\begin{equation}\label{eq:assumption}
\begin{array}{l}
\displaystyle \lim_{t\in\cT,t\to\infty}x_k^t= \lim_{t\in\cT,t\to\infty}x_k^{t-1}=x_k^{\infty},
 \quad \lim_{t\in\cT,t\to\infty}\mu_k^t=\mu_k^{\infty},\\ \displaystyle
\lim_{t\in\cT,t\to\infty}e_k^t= 0,\quad \lim_{t\in\cT,t\to\infty} (x_k^t-x^{t-1}_k)= 0, \quad \forall\ k.
\end{array}
\end{equation}
 Taking limit $t\to\infty$ along $\cT$, we obtain from \eqref{eq:per_step_opt} the following for all $k=1,\cdots, K$:
\begin{align}\label{eq:optimality_infty}
\left\{
\begin{array}{l}
-\lambda_k\le\left[\nabla_kg(x^{\infty})+C^T_k\mu^{\infty}_k\right]_i\le \lambda_k, \ \forall~i\in \cI_k\\
x^{\infty}_k[i]\ge 0\ \textrm{if} \ [\nabla_kg(x^{\infty})+C^T_k\mu^{\infty}_k]_i=-\lambda_k\\
x^{\infty}_k[i]\le 0\ \textrm{if} \ [\nabla_kg(x^{\infty})+C^T_k\mu^{\infty}_k]_i=\lambda_k\\
x^{\infty}_k[i]= 0\ \textrm{if} \ -\lambda_k<[\nabla_kg(x^{\infty})+C^T_k\mu^{\infty}_k]_i<\lambda_k\\
\ C_k[j]x^{\infty}_k=c_k[j],\ \forall~j\in \cJ^{\infty}_k, \ \mu^{\infty}_k[j]=0,\ \forall~j\notin\cJ_k,\ \mu^{\infty}_k\ge 0
\end{array}
\right.
\end{align}
where $\mu^{\infty}_k$, $\cJ^{\infty}_{k}$ are the corresponding components of $\mu^\infty$ and $\cJ^\infty$ respectively.

%Further, from \eqref{eq:per_step_opt}, we have
%\begin{align}\label{eq:per_step_opt_2}
%\left\{
%\begin{array}{l}
%x^t_k[i]\ge 0\ \textrm{if}\ [x^{t-1}_k-\nabla_kg(x^{t-1})-C^T_k\mu^t_k+e^t_k]_i\ge \lambda_k\\
%x^t_k[i]\le 0\ \textrm{if} \ [x^{t-1}_k-\nabla_kg(x^{t-1})-C^T_k\mu^t_k+e^t_k]_i\le-\lambda_k\\
%x^t_k[i]= 0\ \textrm{if} \ -\lambda_k\le[x^{t-1}_k-\nabla_kg(x^{t-1})-C^T_k\mu^t_k+e^t_k]_i\le\lambda_k
%\end{array}
%\right.
%\end{align}

In the following, we compare the two systems \eqref{eq:per_step_opt}, \eqref{eq:optimality_infty} and show that when $t$ becomes large enough, $x^t$ and $x^{\infty}$ will have the same sign pattern. To make this statement precise,  let us define three index sets below %$\{\cI^+_k(x^\infty)\}_{k}$, $\{\cI^-_k(x^\infty)\}_{k}$ and $\{\cI^=_k(x^\infty)\}_{k}$ as follows
%\begin{align}\label{eq:defI}
%\left\{
%\begin{array}{l}
%\cI^{+}_k(x^\infty)=\left\{i: [\nabla_kg(x^\infty)+C^T_k\mu^\infty_k]_i=-\lambda_k, i\in \cI_k\right\}\\
%\cI^{-}_k(x^\infty)=\left\{i: [\nabla_kg(x^\infty)+C^T_k\mu^\infty_k]_i=\lambda_k, i\in \cI_k\right\}\\
%\cI^{=}_k(x^\infty)=\left\{i: \left|[\nabla_kg(x^\infty)+C^T_k\mu^\infty_k]_i\right|<\lambda_k, i\in \cI_k\right\}\\
%\end{array}
%\right..
%\end{align}
\begin{align}\label{eq:defI}
\left\{
\begin{array}{l}
\cI^{+}_k =\left\{i: [\nabla_kg(x^\infty)+C^T_k\mu^\infty_k]_i=-\lambda_k, i\in \cI_k\right\}\\
\cI^{-}_k =\left\{i: [\nabla_kg(x^\infty)+C^T_k\mu^\infty_k]_i=\lambda_k, i\in \cI_k\right\}\\
\cI^{=}_k=\left\{i: \left|[\nabla_kg(x^\infty)+C^T_k\mu^\infty_k]_i\right|<\lambda_k, i\in \cI_k\right\}\\
\end{array}
\right..
\end{align}
We claim that for $t$ large enough, the following {\it identifiability condition} is true for each $k=1,\cdots, K$:
\begin{equation}\label{eq:identifiability}
x_k^t[i]\ge0, \ \forall\; i\in \cI_k^{+}, \quad
x_k^t[i]\le 0, \ \forall\; i\in \cI_k^{-}, \quad
x_k^t[i]=0, \ \forall\; i\in \cI_k^{=}.
\end{equation}

Suppose $i\in \cI_k^{+}$, then we have $[\nabla_kg(x^{\infty})+C^T_k\mu^{\infty}_k]_i=-\lambda_k$.
{The fact that $x_k^{\infty}[i]\ge 0$ implies
%\begin{align}
\[
[x^{\infty}_k-\nabla_kg(x^{\infty})-C^T_k\mu^{\infty}_k]_i\ge \lambda_k.\label{eq:limit_plus}
\]
%\end{align}
By \eqref{eq:assumption}, we have
\[
\lim_{t\in\cT,t\to\infty}\left[x^{t-1}_k-\nabla_kg(x^{t-1})-C^T_k\mu^t_k+e^t_k\right]_i=
[x^{\infty}_k-\nabla_kg(x^{\infty})-C^T_k\mu^{\infty}_k]_i\ge \lambda_k,
\]
%
%Using the following bound
%\begin{align}
%&\|(x^{\infty}_k-\nabla_kg(x^{\infty})-C^T_k\mu^{\infty}_k)-(x^{t-1}_k-\nabla_kg(x^{t-1})-C^T_k\mu^t_k-e^t_k)\|\nonumber\\
%&\le \|x^{\infty}_k-x^{t-1}_k\|+L\|x_k^{\infty}-x_k^{t-1}\|+\|C^T_k(\mu^{\infty}_k-\mu^t_k)\|+\|e^t_k\|\nonumber
%\end{align}
%and the fact that $x^{t-1}_k\to x^{\infty}_k$, $\mu^t_k\to \mu^{\infty}_k$, $e^{t}_k\to 0$ and \eqref{eq:limit_plus},
which further implies that there exists some $t_1$ such that for all $t>t_1$
\[
\left[x^{t-1}_k-\nabla_kg(x^{t-1})-C^T_k\mu^t_k+e^t_k\right]_i\ge\frac{1}{2}\lambda_k> -\lambda_k.
\]
If $x^t_k[i]<0$, then it follows from the above inequality that
\[
\left[x^t_k-x^{t-1}_k+\nabla_kg(x^{t-1})+C^T_k\mu^t_k-e^t_k\right]_i< \lambda_k
\]
which by \eqref{eq:per_step_opt} would imply $x^t_k[i]\ge 0$, a contradiction. Thus, we must have $x^t_k[i]\ge 0$ for all $t> t_1$.}

Using a similar argument, we can show that there exists some $t_2$ such that $x^t_k[i]\le 0$ for any  $i\in \cI_k^{-}$ and for all $t>t_2$.
For any $i\in \cI_k^{=}$, then there holds
\[
x^\infty_k[i]=0,\ \ \left|[\nabla_kg(x^{\infty})+C^T_k\mu^{\infty}_k]_i\right|<\lambda_k.
\]
It follows from \eqref{eq:assumption} that
\[
\lim_{t\in\cT,t\to\infty}\left|\left[-\nabla_kg(x^{r-1})+x_k^{r-1}-C^T_k\mu^t_k+e^t_k\right]_i\right|
=\left|[\nabla_kg(x^{\infty})+C^T_k\mu^{\infty}_k]_i\right|<\lambda_k,
\]
which further implies that there exists some $t_3$ such that
\[
\left|\left[-\nabla_kg(x^{r-1})+x_k^{r-1}-C^T_k\mu^t_k+e^t_k\right]_i\right|<\lambda_k, \quad \forall\ t\ge t_3.
\]
We prove by contradiction that $x_k^t[i]=0$ for all $t>t_3$. Specifically, if $x^t_k[i]>0$, then the above inequality implies
\[
\left[x_k^t-x_k^{r-1}+\nabla_kg(x^{r-1})+C^T_k\mu^t_k+e^t_k\right]_i>x^t_k[i]-\lambda_k>-\lambda_k,
\]
which by \eqref{eq:per_step_opt} further implies that $x^t_k[i]\le 0$, a contradiction. Similarly, $x^t_k[i]$ cannot be negative either. Thus, we have $x^t_k[i]=0$ for all $i\in\cI^{=}_k$ and all $t\ge t_3$.
This completes the proof of the identifiability property \eqref{eq:identifiability} for all $k$.

Now we are ready to show \eqref{eq:target}. Assume that $t$ is large enough such that the identifiability condition \eqref{eq:identifiability} is true. Suppose $i\in \cI^+_k$, we have
\begin{eqnarray*}
\left[\nabla_kg(x^{\infty})\right]_i[x^{t}_k-x^{\infty}_k]_i
&=&\left(-\lambda_k-[C^T_k\mu^{\infty}_k]_i\right)[x^{t}_k-x^{\infty}_k]_i \\
&=&\left(-\lambda_k-C^T_k[i]\mu^{\infty}_k\right)[x^{t}_k-x^{\infty}_k]_i\\
&=&-\lambda_k[x^{t}_k-x^{\infty}_k]_i- \langle\mu^{\infty}_k, \left(C^T_k[i]\right)^{T}([x^{t}_k-x^{\infty}_k]_i)\rangle.
\end{eqnarray*}
Similarly, we have
\begin{eqnarray*}
\left[\nabla_kg(x^{\infty})\right]_i[x^{t}_k-x^{\infty}_k]_i&=&\lambda_k[x^{t}_k-x^{\infty}_k]_i- \langle\mu^{\infty}_k, \left(C^T_k[i]\right)^T([x^{t}_k-x^{\infty}_k]_i)\rangle,\quad  \forall\ i\in \cI^-_k,\\
\left[\nabla_kg(x^{\infty})\right]_i[x^{t}_k-x^{\infty}_k]_i&=&- \langle\mu^{\infty}_k, \left(C^T_k[i]\right)^T([x^{t}_k-x^{\infty}_k]_i)\rangle=0,\quad \forall\ i\in \cI^=_k.
\end{eqnarray*}
Using the above relations, we obtain for $t$  large enough,
\begin{eqnarray*}
\langle\nabla g(x^{\infty}), x^t-x^{\infty}\rangle+h(x^t)-h(x^{\infty})
&=&\sum_{k=1}^{K}\sum_{i\in \cI_k}\left(\left[\nabla_kg(x^{\infty})\right]_i[x^{t}_k-x^{\infty}_k]_i
+\lambda_k\left(|x^t_k[i]|-|x^{\infty}_k[i]|\right)\right)\nonumber\\
&=&\sum_{k=1}^{K}\sum_{i\in\cI^{+}}-\lambda_k[x^{t}_k-x^{\infty}_k]_i+
\sum_{k=1}^{K}\sum_{i\in\cI^{-}}\lambda_k[x^{t}_k-x^{\infty}_k]_i\nonumber\\
&&\quad-\sum_{k=1}^{K}\langle\mu^{\infty}_k,C_k(x^t_k-x^{\infty}_k)\rangle+\lambda_k\left(|x^t_k[i]|-|x^{\infty}_k[i]|\right)\nonumber\\
&=&\sum_{k=1}^{K}\sum_{i\in\cI^{+}}-\lambda_k[x^{t}_k-x^{\infty}_k]_i+
\sum_{k=1}^{K}\sum_{i\in\cI^{-}}\lambda_k[x^{t}_k-x^{\infty}_k]_i\nonumber\\
&&\quad +\sum_{k=1}^{K}\sum_{i\in\cI^{+}}\lambda_k[x^{t}_k-x^{\infty}_k]_i+
\sum_{k=1}^{K}\sum_{i\in\cI^{-}}-\lambda_k[x^{t}_k-x^{\infty}_k]_i\nonumber\\
&=&0.
%&=\sum_{k=1}^{K}\sum_{i\in\cI^{+}(x^{\infty})\bigcup\cI^{-}(x^{\infty})}
%\left(\left[\nabla_kg(x^{\infty})\right]_i[x^{t}_k-x^{\infty}_k]_i+\left(|x^r_k[i]|-|x^{\infty}_k[i]|\right)\right)
\end{eqnarray*}
Note that the second to the last equality is due to $\cJ^t=\cJ^\infty$ for all $t\in \cT$, which implies
\[C_k[j]x^{\infty}_k=c_k,\ \forall~j\in \cJ_k, \ \mu^{\infty}_k[j]=0,\ \forall~j\notin\cJ_k,\]
so that $\sum_{k=1}^{K}\langle\mu^{\infty}_k,C_k(x^t_k-x^{\infty}_k)\rangle=0$.
Using \eqref{eq:optimal_equivalence}, we obtain the desired result. This completes the proof for part (2) of the lemma.
\QED

We remark that the identifiability property \eqref{eq:identifiability} has been observed numerically by Richt\'{a}rik and Tak\'{a}\v{c} in \cite[Section 6.1.7]{richtarik12}  when using a randomized block coordinate descent method to solve a certain $\ell_1$-minimization problem. Here in the proof of Lemma \ref{lm:p-descent}, we have established this property theoretically.

Next we use Lemma \ref{lm:p-descent} to show that both the BSUM and R-BSUM converge linearly.
\begin{theorem}\label{thm:main_linear_bcd}
{Suppose Assumptions A and B hold. Then we have the following:
\begin{enumerate}
\item  For the BSUM algorithm, the sequence $\{\Delta^r\}$ vanishes Q-linearly. {The same conclusion is true if the compactness assumption A(c) is replaced with the compactness of the level set $X^1:=\{x \mid f(x)\le f(x^1)\}$. }

\item For the R-BSUM algorithm, assume that the nonsmooth part has the form $h_k(x_k)=\lambda_k\|x_k\|_1$ for some $\lambda_k\ge 0$, and that $C_k$ has full row rank for each $k$. Then $\{\mathbb{E}[\Delta^t]\}$ vanishes Q-linearly.
\end{enumerate}
}
\end{theorem}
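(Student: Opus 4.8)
The plan is to convert three already-established estimates—the sufficient-decrease bound (Lemma~\ref{lm:p-descent} specialized to $\rho=0$ with the dual variable absent, so that $L(x;y)$ reduces to $f(x)$), the cost-to-go bound (Lemma~\ref{lm:cost-to-go}), and the error bound (Lemma~\ref{lm:eb})—into a one-step contraction of the optimality gap. Consider first the deterministic BSUM in part~(1). Since the constraints are gone, $X(y)$ reduces to the optimal set $X^*$, so Lemma~\ref{lm:estimate}(1) reads $\|\tilde\nabla f(x^r)\|\le\sigma\|x^{r+1}-x^r\|$, and Lemma~\ref{lm:eb}—whose hypotheses hold globally here because each $X_k$ is compact polyhedral—gives $\|x^r-\bar{x}^r\|=\dist(x^r,X^*)\le\tau\|\tilde\nabla f(x^r)\|$. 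Chaining these yields $\|x^r-\bar{x}^r\|\le\tau\sigma\|x^{r+1}-x^r\|$, which, substituted into the cost-to-go bound $\Delta^r\le\zeta\|x^{r+1}-x^r\|^2+\zeta'\|x^r-\bar{x}^r\|^2$, collapses it to $\Delta^r\le C\|x^{r+1}-x^r\|^2$ with $C:=\zeta+\zeta'\tau^2\sigma^2$. Invoking sufficient decrease, $\Delta^{r-1}-\Delta^r=f(x^r)-f(x^{r+1})\ge\gamma\|x^r-x^{r+1}\|^2\ge(\gamma/C)\Delta^r$, so rearranging gives the Q-linear contraction
\[
\Delta^r\le\frac{C}{C+\gamma}\,\Delta^{r-1},
\]
with ratio strictly below $1$. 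For the level-set variant, $f(x^{r+1})\le f(x^r)$ forces all iterates into $X^1$; if $X^1$ is compact, Corollary~\ref{cor:bcd} gives $\|x^{r+1}-x^r\|\to0$, hence $\|\tilde\nabla f(x^r)\|\to0$, so the polyhedral error bound of Lemma~\ref{lm:eb}(1) (which needs $\|\tilde\nabla f\|\le\delta$) applies for all large $r$ and the same contraction holds asymptotically.

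The randomized case in part~(2) follows the identical template but in conditional expectation. The unconstrained specialization of Lemma~\ref{lm:estimate}(2) gives $\|\tilde\nabla f(x^t)\|\le\hat{\sigma}_1\|\hat{x}^{t+1}-x^t\|$ (the $\hat{\sigma}_2$ term vanishes with the dual), and combining with the error bound produces $\|x^t-\bar{x}^t\|\le\tau\hat{\sigma}_1\|\hat{x}^{t+1}-x^t\|$. Feeding this into the R-BSUM cost-to-go bound $\mathbb{E}[\Delta^t\mid x^t]\le\hat{\zeta}'\|x^t-\bar{x}^t\|^2$, which holds for $t\ge t_0$, gives $\mathbb{E}[\Delta^t\mid x^t]\le\hat{C}\|\hat{x}^{t+1}-x^t\|^2$ with $\hat{C}:=\hat{\zeta}'\tau^2\hat{\sigma}_1^2$. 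The expected sufficient-decrease bound of Lemma~\ref{lm:p-descent}(2), which here reads $\Delta^{t-1}-\mathbb{E}[\Delta^t\mid x^t]=\mathbb{E}[f(x^t)-f(x^{t+1})\mid x^t]\ge\hat{\gamma}\|x^t-\hat{x}^{t+1}\|^2\ge(\hat{\gamma}/\hat{C})\,\mathbb{E}[\Delta^t\mid x^t]$, rearranges to $\mathbb{E}[\Delta^t\mid x^t]\le\frac{\hat{C}}{\hat{C}+\hat{\gamma}}\Delta^{t-1}$. Taking total expectation yields $\mathbb{E}[\Delta^t]\le\frac{\hat{C}}{\hat{C}+\hat{\gamma}}\,\mathbb{E}[\Delta^{t-1}]$ for all $t\ge t_0$, i.e.\ $\{\mathbb{E}[\Delta^t]\}$ vanishes Q-linearly.

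The genuine difficulty has already been discharged inside Lemma~\ref{lm:cost-to-go}(2): establishing that the cross term $\langle\nabla g(\bar{x}^t),x^t-\bar{x}^t\rangle+h(x^t)-h(\bar{x}^t)$ eventually vanishes, which rests on the finite sign-pattern identification property \eqref{eq:identifiability} and is precisely what forces the extra hypotheses $h_k=\lambda_k\|\cdot\|_1$, $C_k$ of full row rank, and the threshold $t_0$. Given that lemma, the steps above are routine assembly. The only points requiring care in the write-up are (i)~verifying that the error bound applies without the $\delta$-smallness caveat—immediate under Assumption~A(c), and obtained asymptotically via Corollary~\ref{cor:bcd} under the level-set assumption; and (ii)~keeping the conditioning straight, treating $f(x^t)$ as deterministic given $x^t$ while $x^{t+1}$ is averaged, which is exactly what promotes the conditional contraction to an unconditional Q-linear rate after taking total expectations.
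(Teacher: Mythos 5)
Your proposal is correct and follows essentially the same route as the paper: chain the error bound of Lemma~\ref{lm:eb} with the proximal-gradient estimate of Lemma~\ref{lm:estimate} to collapse the cost-to-go bound of Lemma~\ref{lm:cost-to-go} into $\Delta^r\le C\|x^{r+1}-x^r\|^2$ (resp.\ its conditional-expectation analogue for $t\ge t_0$), then combine with the sufficient-decrease estimate of Lemma~\ref{lm:p-descent} to obtain the contraction $\Delta^r\le\frac{C}{C+\gamma}\Delta^{r-1}$. The only cosmetic differences are that you establish the conditional contraction before taking total expectation (the paper takes full expectations first) and that you treat the level-set variant slightly more carefully via the $\delta$-conditioned form of the error bound; neither changes the argument.
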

\begin{proof}
We first show part (1) of the claim.
{\color{black}
By directly adapting the proof of Lemma~\ref{lm:p-descent}-(1), we can show the following sufficient descent
\begin{align}
\Delta^r-\Delta^{r-1}\le -\gamma \|x^{r+1}-x^{r}\|^2,\nonumber
\end{align}
where $\gamma=\min_k\gamma_k$. This implies that $x^r\in X^1$ for all $r\ge 1$. By \eqref{eq:p-gap-bcd} in Lemma \ref{lm:cost-to-go}, we have that for all $r\ge 1$
\begin{align}
\Delta^r&\le \zeta\|x^{r+1}-x^r\|^2+\zeta^{'}\|x^r-\bar{x}^r\|^2\nonumber\\
&\le  \zeta\|x^{r+1}-x^r\|^2+\zeta^{'}\tau^2\|\tilde{\nabla} f(x^r)\|^2\nonumber\\
&\le  (\zeta+\zeta^{'}\tau^2\sigma^2)\|x^{r+1}-x^r\|^2\label{eq:bound_delta_by_difference}
\end{align}
where the last inequality is obtained by specializing Lemma \ref{lm:estimate}-(1) to the BSUM algorithm.}
Note that due to the compactness of either the feasible set $X$ or the level set $X^1$, the second inequality, which uses the error bound condition in Lemma \ref{lm:eb}, holds true for all $r\ge 1$.
Combining the previous two results, we have
\begin{align}
\Delta^r-\Delta^{r-1}\le -\frac{\gamma}{(\zeta+\zeta^{'}\tau^2\sigma^2)}\Delta^r,\quad \mbox{for all $r\ge 1$.}\nonumber
\end{align}
Define $\lambda:=\frac{\gamma}{(\zeta+\zeta^{'}\tau^2\sigma^2)}$, we conclude that $\Delta^r$ converges $Q$-linearly, that is,
\begin{align}
0\le \Delta^r\le\frac{1}{1+\lambda}\Delta^{r-1}\quad \mbox{for all $r\ge 1$}.\nonumber
\end{align}
%Additionally, if the sets $X_k$'s are not compact, then the error bound condition Lemma \ref{lm:eb} only holds when $x^r$ is sufficiently close to $X^*$. From Corollary \ref{cor:bcd}, we know that $x^r$ converges to the optimal $X^*$, which suggests that for any $\delta>0$, there must exist an index $\bar{r}$ such that for all $r>\bar{r}$, $\|\tilde{\nabla} f(x^r)\|\le\delta$, therefore the error bound condition holds true. In this case, \eqref{eq:bound_delta_by_difference} hence \eqref{eq:linear_bcd} are only true for all $r\ge \bar{r}$. As a result, we conclude that $\Delta^r\to 0$ R-linearly.

It remains to  show part (2) of the thereom. By adapting the proof of Lemma \ref{lm:p-descent}-(2) and taking full expectation, we have
\begin{align}
\mathbb{E}[\Delta^t-\Delta^{t-1}]\le -\hat{\gamma} \mathbb{E}[\|\hat{x}^{t+1}-x^{t}\|^2]\label{eq:p-descent-sbcd}.
\end{align}
where $\hat{\gamma}=\min_{k}p_k\gamma_k$,
%Taking full expectation, we have
%\begin{align}
%\mathbb{E}[\Delta^t-\Delta^{t-1}]\le -\hat{\gamma}\mathbb{E}[ \|\hat{x}^{t+1}-x^{t}\|^2].
%\end{align}
Using \eqref{eq:p-descent-sbcdmm} in Lemma \ref{lm:cost-to-go}, there exists a ${t}_0>0$ such that for all $t>{t}_0$, the following is true
\begin{align}\nonumber
\mathbb{E}[\Delta^t\mid x^{t}]&\le \hat{\zeta}\|x^t-\bar{x}^t\|^2\le  \hat{\zeta}\tau^2\|\tilde{\nabla} f(x^t)\|^2\le  \hat{\zeta}\tau^2\hat{\sigma}^2\|\hat{x}^{t+1}-x^t\|^2, \ \mbox{w.p.1.}, %\label{eq:bound_delta_by_difference_sbcd},
\end{align}
where the last inequality is obtained by specializing Lemma \ref{lm:estimate}-(2) to the R-BCD algorithm.
Taking full expectation, we obtain
\begin{align}\nonumber
\mathbb{E}[\Delta^t]\le \hat{\zeta}\tau^2\hat{\sigma}^2\mathbb{E}\left[\|\hat{x}^{t+1}-x^t\|^2\right],\ \forall~t\ge t_0.
\end{align}
Combining this with \eqref{eq:p-descent-sbcd} yields
\begin{align}\nonumber
\mathbb{E}\left[\Delta^t-\Delta^{t-1}\right]\le -\frac{\hat{\gamma}}{\hat{\zeta}\tau^2\hat{\sigma}^2}\mathbb{E}[\Delta^t], \ \forall~t\ge t_0.
\end{align}
Define $\hat{\lambda}:=\frac{\hat{\gamma}}{\hat{\zeta}\tau^2\hat{\sigma}^2}$, we conclude that there exists a $t_0>0$ such that
\begin{align}\nonumber
0\le \mathbb{E}[\Delta^t]\le\frac{1}{1+\hat{\lambda}}\mathbb{E}[\Delta^{t-1}], \ \forall~t\ge t_0.
\end{align}
implying that $\mathbb{E}[\Delta^t]$ vanishes Q-linearly.
\QED

\end{proof}

Recently the authors of \cite{Sanjabi13} have shown that the cyclic BCD algorithm converges R-linearly under assumptions similar to Assumption A, except that the compactness assumption (Assumption A(c)) is not required.
Compared with \cite{Sanjabi13}, the new elements in part (1) of Theorem \ref{thm:main_linear_bcd} are: (i) the cyclic BCD algorithm converges Q-linearly when the feasible set or the level set is compact; (ii) The same rate can be obtained when the per-block problem is minimized approximately by working with the approximate function $u_k(\cdot;\cdot)$.

\section{Numerical Results}
\vspace{-0.2cm}
In this section, we report numerical results that illustrate the effectiveness of the BSUM-M for large practical problems.
\subsection{Sovling a Linear System of Equations}
Recently, the authors of \cite{chen13} have demonstrated via a counterexample, that the classic two-block ADMM algorithm could diverge when applied to solve problems with three or more blocks. In the counterexample, ADMM is used to solve the following linear systems of equations (which has a unique solution $x_1=x_2=x_3=0$)
\begin{align}
&E_1 x_1 +E_2 x_2 + E_3 x_3 =0, \label{eq:Example}\\
&\mbox{with}\quad  [E_1 \; E_2\; E_3]=\left[
\begin{array}{lll}
1 & 1 & 1\\
1 & 1 & 2\\
1 & 2 & 2
\end{array}
\right].
\end{align}
It is shown in \cite{chen13} that regardless of the starting point, the ADMM algorithm always diverges. However,  we have shown in this paper that the BSUM-M is guaranteed to obtain the unique solution of the above linear system of equations\footnote{Since the $[E_1\; E_2\; E_3]$ is full rank, it follows that the augmented Lagrangian function is strongly convex and therefore the global error bound condition holds and the compactness assumption is not needed.}.
The following special version of the BSUM-M iteration for solving \eqref{eq:Example} has the same iteration as the ADMM except for a different dual stepsize.
\begin{align}
y^{r+1}   &= y^r+\alpha^r\left(E_1 x^r_1 +E_2 x^r_2 + E_3 x^r_3\right)\nonumber\\
x^{r+1}_1 &= (E^T_1E_1)^{-1}\left(-E^T_1 E_2 x^r_2 - E^T_1 E_3 x^r_3 - E^T_1 y^{r+1}/\rho\right)\nonumber\\
x^{r+1}_2 &= (E^T_2E_2)^{-1}\left(-E^T_2 E_1 x^{r+1}_1 - E^T_2 E_3 x^r_3 - E^T_2 y^{r+1}/\rho\right)\nonumber\\
x^{r+1}_3 &= (E^T_3E_3)^{-1}\left(-E^T_3 E_1 x^{r+1}_1 - E^T_3 E_2 x^{r+1}_2 - E^T_3 y^{r+1}/\rho\right)\nonumber.
\end{align}
In our experiment, we choose $\rho=1$ and $\alpha^r=\rho\times \frac{1}{\sqrt{r}}$. We run the BSUM-M and RBSUM-M for $1000$ trials, and for each trial we initialize the components in $x$ and $y$ uniformly randomly from $[-10, 10]$. For the RBSUM-M algorithm, the primal and dual blocks are picked with equal probability at each iteration. We see from Figs. \ref{figCounterExampleBCDMM}--\ref{figCounterExampleRBCDMM} below that in all the trials both algorithms converge nicely. The RBSUM-M takes longer time to converge, because at each iteration only a single primal or dual variable is updated.

\begin{figure}[htb]
        \begin{minipage}[t]{0.5\linewidth}
    \centering
    {\includegraphics[width=1\linewidth]{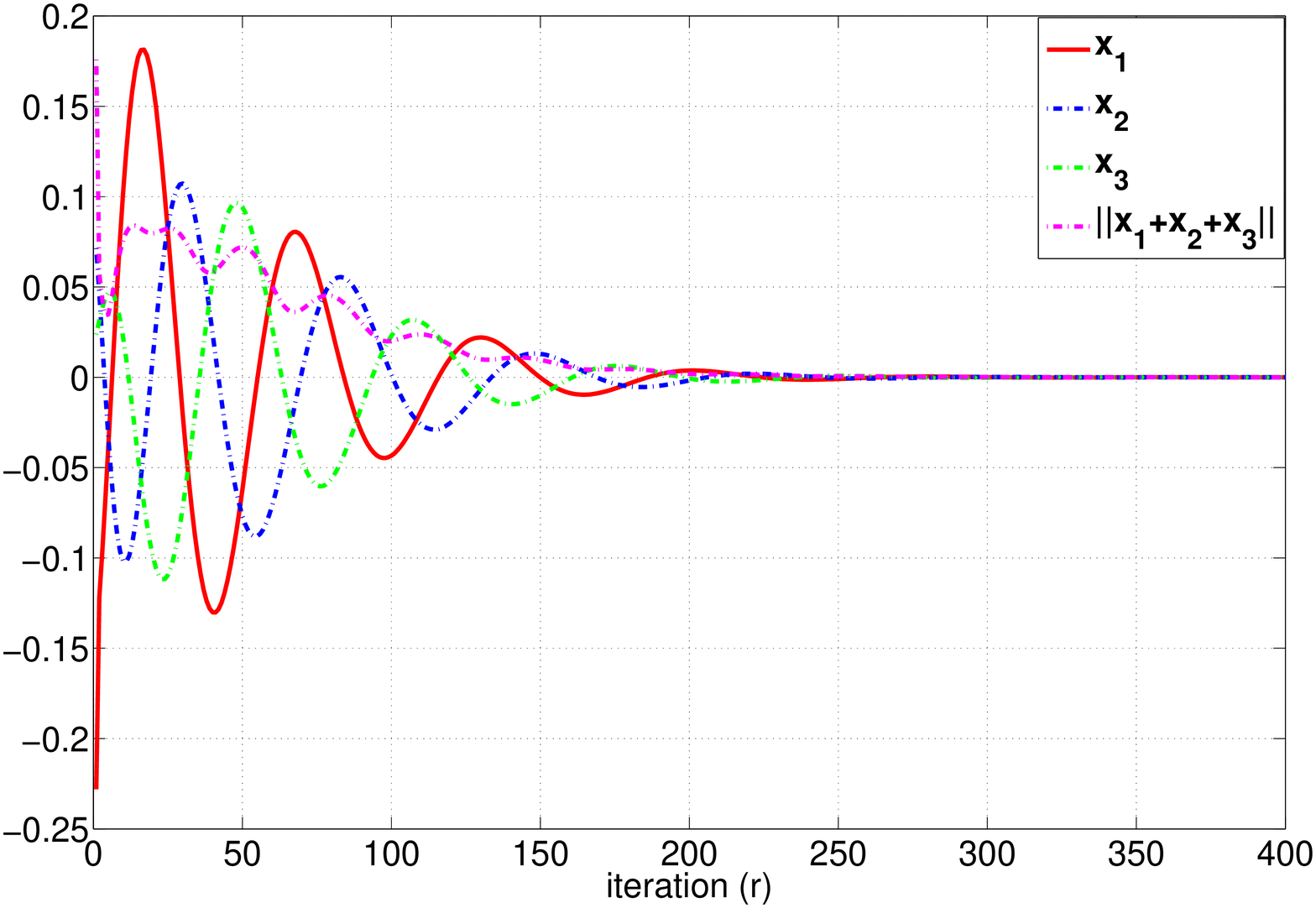}
\caption{Iterates generated by the BSUM-M   for solving \eqref{eq:Example}. Each curve is averaged over 1000 runs (with random starting points).}\label{figCounterExampleBCDMM}}
\end{minipage}\hfill
    \begin{minipage}[t]{0.5\linewidth}
    \centering
     {\includegraphics[width=1\linewidth]{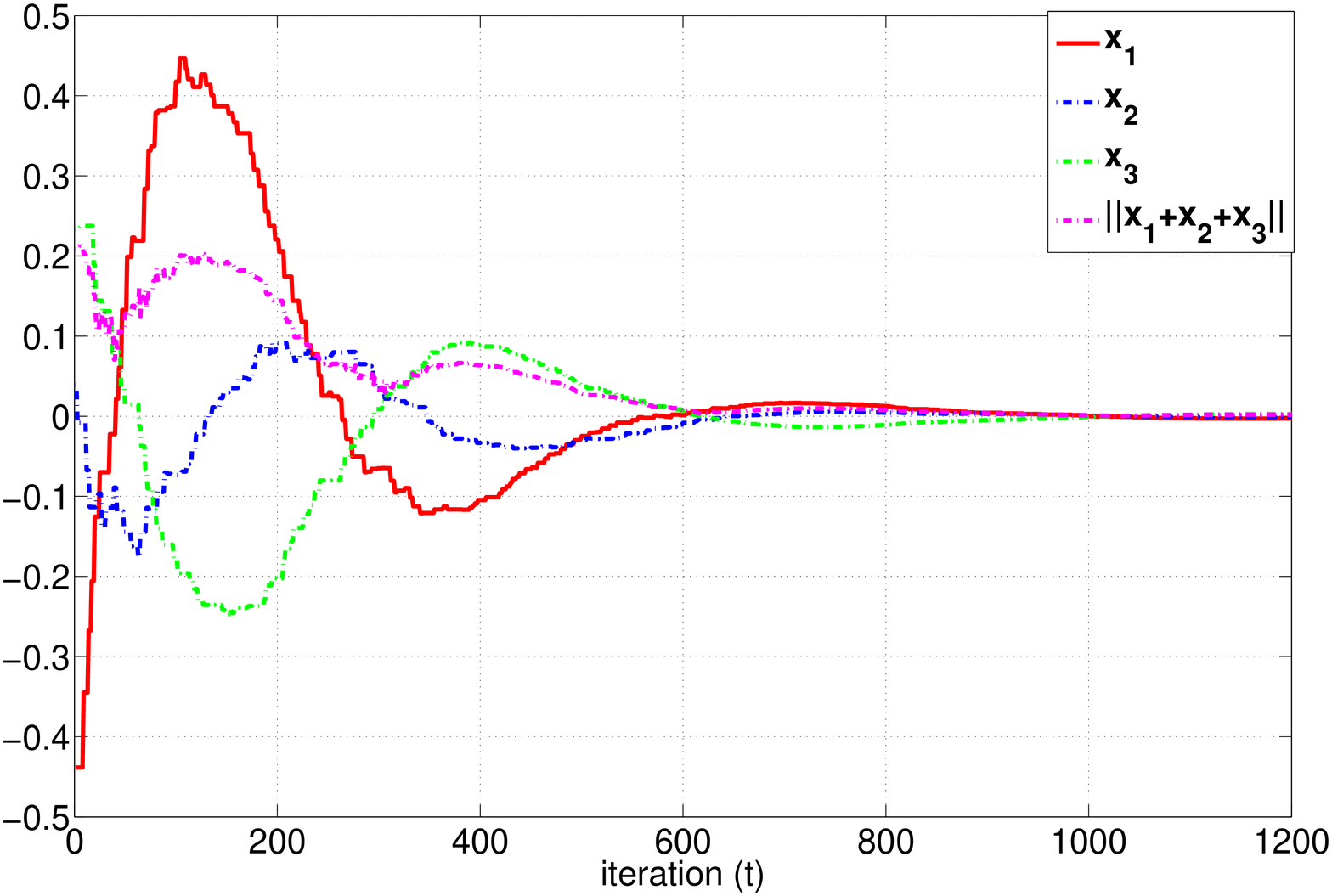}
\caption{Iterates generated by the RBSUM-M algorithm for solving \eqref{eq:Example}. Each curve is averaged over 1000 runs (with random starting points)}\label{figCounterExampleRBCDMM}}
\end{minipage}
 \end{figure}

\subsection{The BP Problem}
In the second experiment, we consider the BP problem \eqref{eq:BP_block}, and fix each block variable $x_k$ to be a {\it scalar}. Then the primal subproblem for the BSUM-M at the $r$-th iteration for the $k$-th variable is given by
\begin{align}
\min_{x_k}\frac{1}{\rho\|e_k\|^2}|x_k|+\frac{1}{2}\left(x_k+\frac{e^T_kc_k^r}{\|e_k\|^2}\right)^2
\end{align}
where $e_k$ is the $k$-th column of $E$, $c_k^r=w^r_{-k}+y^{r+1}/\rho-q$. This problem can be solved in closed-form by the soft-thresholding operator. It is worth noting that the update for each component variable $x_k$ only requires one piece of data $e_k$. Therefore in situations where the data matrix is only partially available at each update iteration \cite{nestrov12, richtarik12}, the randomized BSUM-M algorithm can be very valuable.

We randomly generate the matrix $E\in\Re^{m\times n}$ and the true solutions $\bar{x}$ with each of their nonzero components following standard Gaussian distribution. We let $E$ be a dense matrix, and $\bar{x}$ be a sparse vector, with each component having probability $p\in(0,1)$ to be nonzero (see \cite{Yang10L1} for details). We normalize the columns of $E$ to have norm 1. We have used the following stepsize rule for the BSUM-M  and the RBSUM-M: $\rho=10\times m/\|q\|_1$, $\alpha^r=\rho\frac{10+1}{\sqrt{r}+10}$. Unless specified explicitly, the blocks in the RBSUM-M are chosen uniformly with $p_k=\frac{1}{K+1}$ for all $k=0,\cdots, K$. The BSUM-M and the RBSUM-M are compared with a number of well-known algorithms for BP such as DALM, PALM \cite{Yang_alternatingdirection} and FISTA \cite{Beck:2009:FIS:1658360.1658364}; see \cite{Yang10L1} for a detailed review and implementation of these algorithms. In particular, for PALM, the primal and dual stepsizes are set equally to  $10\times m/\|q\|_1$; for DALM, the primal and dual stepsizes are set to $0.1\times |q\|_1/m$; for FISTA, backtrack line search is used (these are the default settings in the package \cite{Yang10L1}).

\begin{table}
\centering{\small\vspace{-0.1cm}
\begin{tabular}{|c|c|c|c|c|c|c|c|}
  \hline
  ${\boldsymbol{n}}$ & $\boldsymbol{m}$& $\boldsymbol{p}$ & {\bf BSUM-M} &{\bf RBSUM-M} & {\bf PALM} & {\bf DALM}& {\bf FISTA}\\
  \hline
  {\bf 10000} & {\bf 3000}& {\bf 0.06} & 226 &796 & 948& 840& 768\\
  \hline
  {\bf 10000} & {\bf 3000}& {\bf 0.01} & 74 &418 & 370 & 374& 584\\
  \hline
  {\bf 10000} & {\bf 5000}& {\bf 0.06} & 144 &670 & 542 & 604& 618\\
  \hline
  {\bf 10000} & {\bf 5000}& {\bf 0.01} & 64 &422 & 188 & 234& 582\\
  \hline
\end{tabular}}
\caption{\footnotesize Average $\#{\rm MVM}$ performance for different algorithms.}
\vspace{-0.3cm} \label{table:BPPerformance}
\end{table}

We first consider a relatively small problem. The stopping criteria for all the algorithms is that either the iteration counter is larger than $1000$, or the relative error $\|x^{r}-\bar{x}\|/\|\bar{x}\|\le 10^{-10}$. Fig. \ref{fig:BCDMM} shows the convergence behavior of all the algorithms for one instance of the problem with $n=10000$, $m=3000$ and $p=0.06$. For ease of exposition, in this figure each iteration of  the RBSUM-M consists of $10000$ random update steps. In Table \ref{table:BPPerformance}, we show the averaged performance (over $100$ problem realizations) for different algorithms. For a fair comparison of the computational cost, the algorithms are compared according to the number of matrix-vector multiplications, denoted by ${\rm \#MVM}$, which includes both $Ex$ and $E^T y$ (see e.g., \cite{Yang_alternatingdirection} for a similar definition). Clearly the BSUM-M approach exhibits superior performance over all other algorithms.

\begin{figure}
 \centering
 \vspace{-0.5cm}
\includegraphics[width=0.7\linewidth]{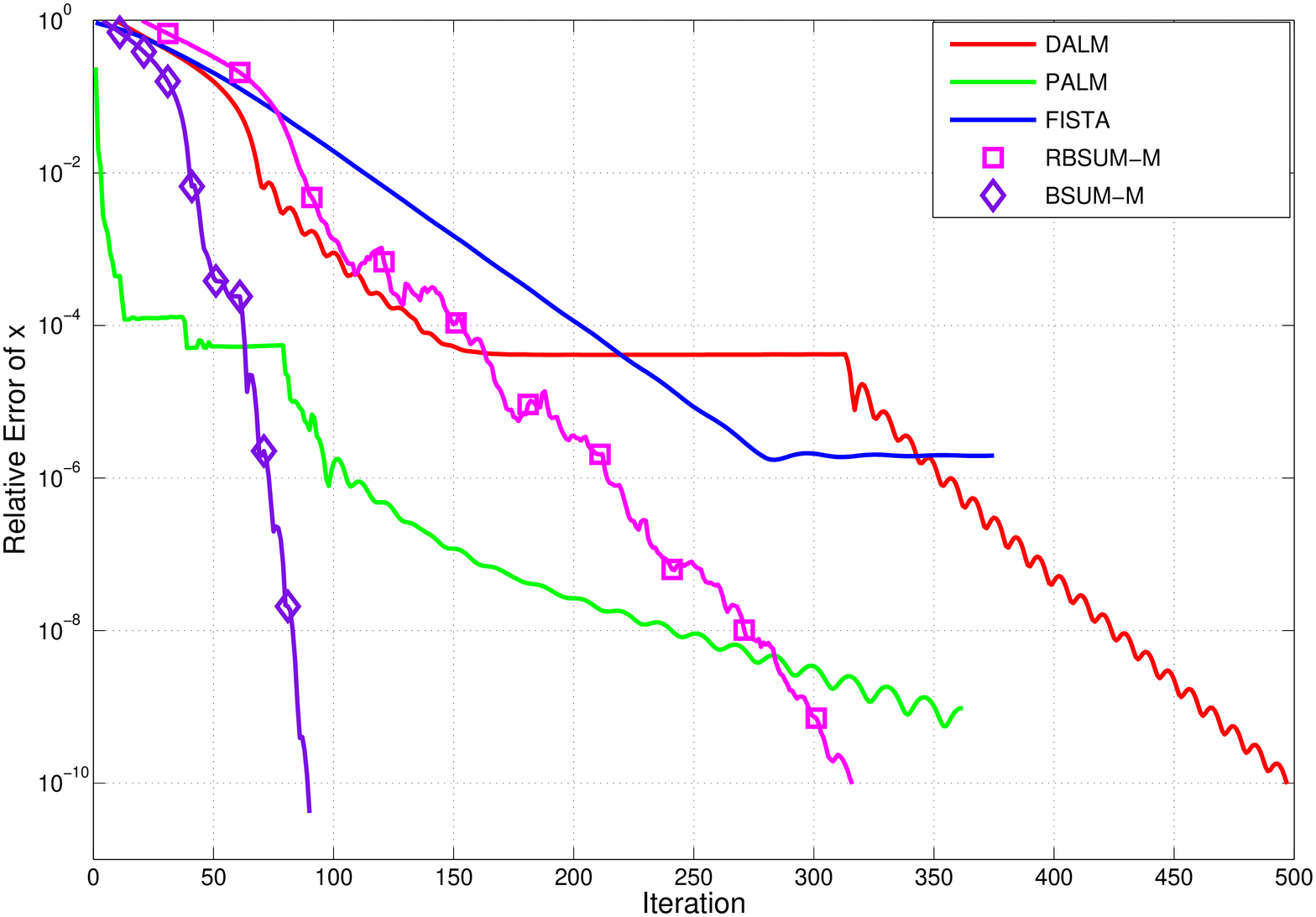}
\vspace{-0.4cm}
\caption{\footnotesize Relative error performance for all algorithms on a small-size problem. $n=10000$, $m=3000$, $p=0.06$. The relative error is given by $=\|\bar{x}-x^r\|/\|\bar{x}\|$.}\label{fig:BCDMM}
 \vspace{-0.1cm}
\end{figure}

It is worth mentioning that except for BSUM-M and RBSUM-M, all the rest of algorithms suffer from pitfalls that prevent them from solving really large problems. For example the PALM requires the knowledge of $\rho(E^{T}E)$ (the largest eigenvalue of $E^{T}E$), the version of DALM with convergence guarantee requires the inversion of $E E^T$ \cite{Yang_alternatingdirection}, both of which are difficult operations when $E$ is large (say when $n$ and $m$  are larger than $10^6$). The FISTA algorithm either needs $\rho(E^T E)$, or is required to perform backtrack line search within each iteration \cite{Beck:2009:FIS:1658360.1658364}, both of which are again difficult to implement for large size problems. In contrast, each step of the BSUM-M and RBSUM-M is simple and in closed-form, which makes it easily scalable for large problems. We have also tested the BSUM-M on two large experiments  \footnote{We use a PC with 128 GB RAM and 24 Intel Xeon 2.67 GHz cores.}: experiment 1 with $n=10^6$, $m=10^3$ and $\|\bar{x}\|_0=28$;  experiment 2 with $n=10^6$,  $m=2\times 10^3$ and $\|\bar{x}\|_0=82$. It takes  $7$ GB and $14$ GB of memory space to store the data of these problems, respectively. For both problems, the BSUM-M and RBSUM-M perform quite well: for the first (resp.\ the second) experiment they take around $15$ iterations and about $60$ seconds (resp. $25$ iterations and $200$ seconds) to reduce the relative error to about $10^{-6}$.

\begin{table}
\centering{\small\vspace{-0.1cm}
\begin{tabular}{|c|c|c|c|c|}
  \hline
  {\bf  \# of iterations $r$}& {\bf  Exp. 1} & {\bf  Exp. 2} \\
  \hline
  1 &1 &1 \\
  \hline
  5&0.35 & 0.35 \\
  \hline
  10&0.0012& 0.16 \\
  \hline
  15&7e-6 & 2e-3\\
  \hline
  20&N/A&1e-5 \\
  \hline
  25&N/A&8e-7 \\
  \hline
\end{tabular}}
\caption{\footnotesize Relative error performance of BSUM-M for large-scale problem.}
\label{table:BSUMM}
\end{table}

\begin{table}
\centering{\small\vspace{-0.1cm}
\begin{tabular}{|c|c|c|c|c|}
  \hline
  {\bf  \# of iterations $t$ ($\times 10^6$ )}& {\bf  Exp. 1} & {\bf  Exp. 2}\\
  \hline
  1 &1 &1 \\
  \hline
  5&0.05 & 0.18 \\
  \hline
  10&1e-4& 0.002 \\
  \hline
  15&2e-7 & 0.0019\\
  \hline
  20&N/A&0.0028 \\
  \hline
  25&N/A&6e-5 \\
    \hline
  30&N/A&9e-7 \\
  \hline
\end{tabular}}
\caption{\footnotesize Relative error performance of RBSUM-M for large-scale problem.}
\label{table:RBSUMM}
\end{table}

\subsection{The LASSO Problem}
In this section, we solve the LASSO problem
\begin{align}
\min_{x} \|Ax-b\|+\lambda\|x\|_1
\end{align}
using R-BCD and BCD, which are special cases of RBSUM-M and BSUM-M, respectively. For both algorithms each block variable again consists of a single scalar (i.e., $n_k$=1), so that no approximation is needed, and the per-block subproblem has a closed-form solution. Here our goal is not to establish the superiority of BCD-based algorithms in solving this type of problem (we refer the interested readers to \cite{scutari13flexible}, \cite{richtarik12} for comprehensive numerical studies for such purpose).  Rather, we wish to demonstrate that R-BCD may sometimes outperform the cyclic BCD and vice versa.

We use the instance generator proposed in \cite[Section 6]{nestrov07Gradient} to generate the problem data. After choosing the sparsity level for $A$ and $b$, the generator generates $A$, $b$, $x^*$.  We use $p_A$ (resp. $p_b$) to denote the probability for which each element of $A$ (resp. $b$) is nonzero. We also use the following formula to choose the update probability $p_k$ for each block $k$ \cite{nestrov12, richtarik12}
\begin{align}
p_k=\frac{L^{\alpha}_k}{\sum_{k}L^{\alpha}_k}, \ 0\le \alpha\le 1.
\end{align}

We first let $n=2000$, $m=1000$. The stopping criteria for both algorithms is that either the iteration counter is larger than $2000$, or the relative error $\|x^{r}-\bar{x}\|/\|\bar{x}\|\le 10^{-10}$. In Table \ref{table:LASSOPerformance}, we show the performance for R-BCD and BCD with different combinations of $p_b$ and $p_A$. Each entry in the table is an average of the results over $100$ realizations of the problem data. First we observe that using $0<\alpha\le 1$ improves the convergence significantly compared with uniform sampling (i.e., $\alpha=0$). Second, we see that R-BCD performs better when the data matrix is sparse ($p_A=0.01$), while the cyclic BCD outperforms R-BCD for the rest of the cases.

\begin{table}
\centering{\small\vspace{-0.1cm}
\begin{tabular}{|c|c|c|c|c|}
  \hline
  {\bf $\boldsymbol{p_b}$} & {\bf $\boldsymbol{ p_A}$} & {\bf R-BCD ($\alpha=0.5$)} &{\bf R-BCD ($\alpha=0$)} & {\bf BCD}\\
  \hline
  0.01 & 0.1 & 68 & 212 & 30\\
  \hline
  0.1 & 0.1 & 386 & 784 & 334\\
   \hline
  0.1 & 0.01 & 376 & 444 & 1242\\
     \hline
  0.05 & 0.01 & 180 & 294 & 529\\
  \hline
%  \hline
%    0.01 & 0.1 & 68 & 212 & 388 & 30\\
%  \hline
%  0.1 & 0.1 & 386 & 784 & 350 & 334\\
%   \hline
%  0.1 & 0.01 & 376 & 444 & 374 & 1242\\
%     \hline
%  0.05 & 0.01 & 180 & 294 & 374 & 529\\
%  \hline
\end{tabular}}
\caption{\footnotesize Average $\#{\rm MVM}$ performance for different algorithms, with $n=2000$, $m=1000$.}
\vspace{-0.3cm} \label{table:LASSOPerformance}
\end{table}

Next we consider the scenario where $n=\{50000, 30000, 10000\}$, $m=10000$,  $p_{A}=\{0.01, 0.001\}$ and $p_b=0.016$, and use the uniform sampling for the R-BCD algorithm. We plot the relative errors for both algorithms in Figs. \ref{figLASSO1}--\ref{figLASSO2}. In these figures we have again condensed $n$ random update steps for the R-BCD into a single iteration, so that the iteration numbers of the two algorithms are comparable. Comparing these two figures, we observe that both algorithms achieve better performance when the data matrix is sparser. In particular, the R-BCD converges faster than the cyclic BCD when $p_{A}=0.001$ and when $n/m$ is relatively small. Its performance degrades when $n/m$ becomes large. When $n=50000$, the R-BCD  does not show sign of convergence within the first few thousands of iterations. This observation was also noted in \cite[Section 6.14]{richtarik12} where the authors show that in a similar setting, it takes about $20000$ iterations (in each iteration all variables are updated once) for the R-BCD to converge to a reasonable solution. What is probably surprising here is that when $n=50000$, the cyclic BCD  performs quite well compared to the randomized version.

\begin{figure}[htb]
        \begin{minipage}[t]{0.5\linewidth}
    \centering
    {\includegraphics[width=1\linewidth]{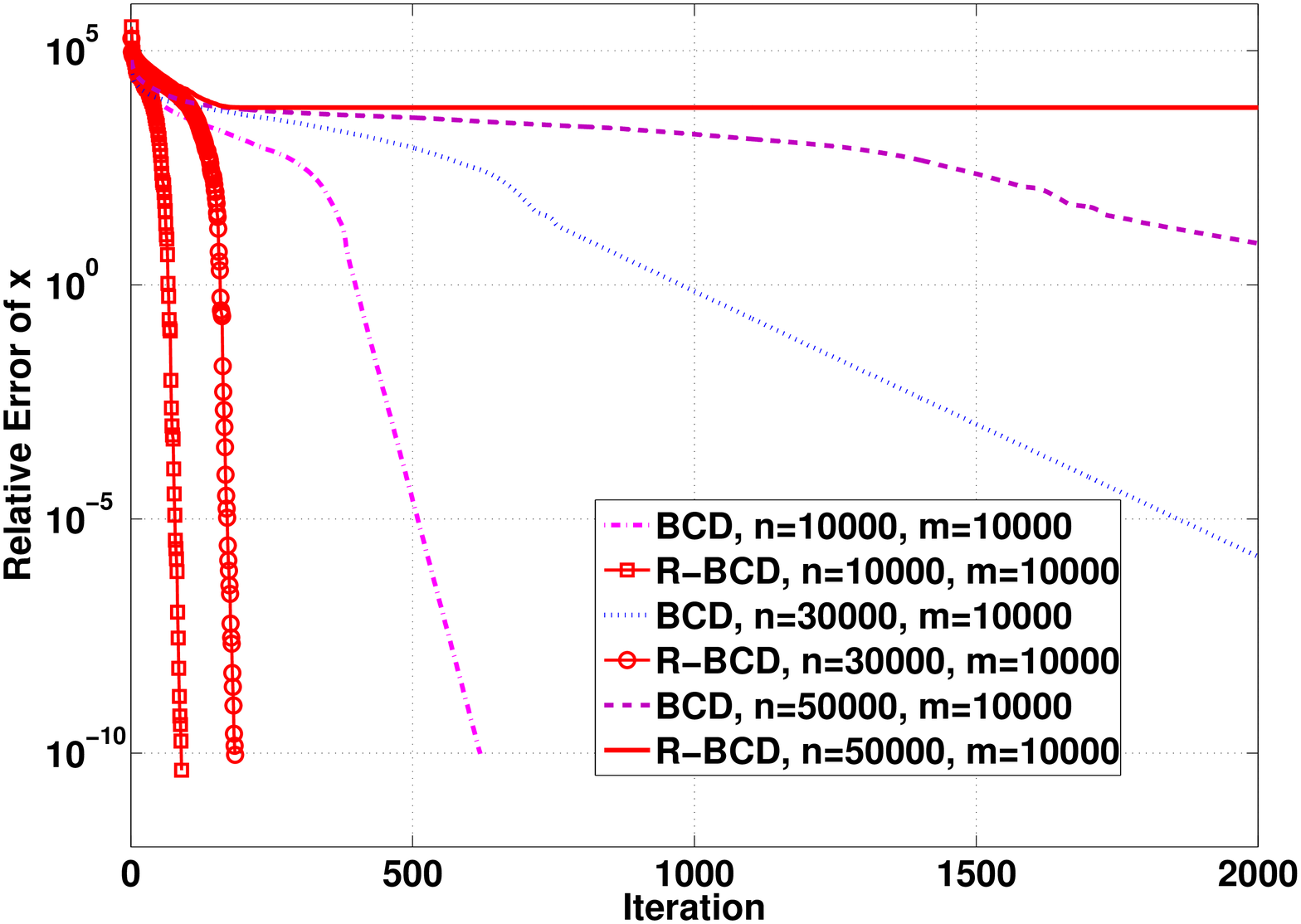}
\caption{Relative error performance for R-BCD and BCD . $p_A=0.001$, $p_b=0.016$, $n=\{10000, 30000, 50000\}$, $m=10000$.}\label{figLASSO1}}
\end{minipage}\hfill
    \begin{minipage}[t]{0.5\linewidth}
    \centering
     {\includegraphics[width=1\linewidth]{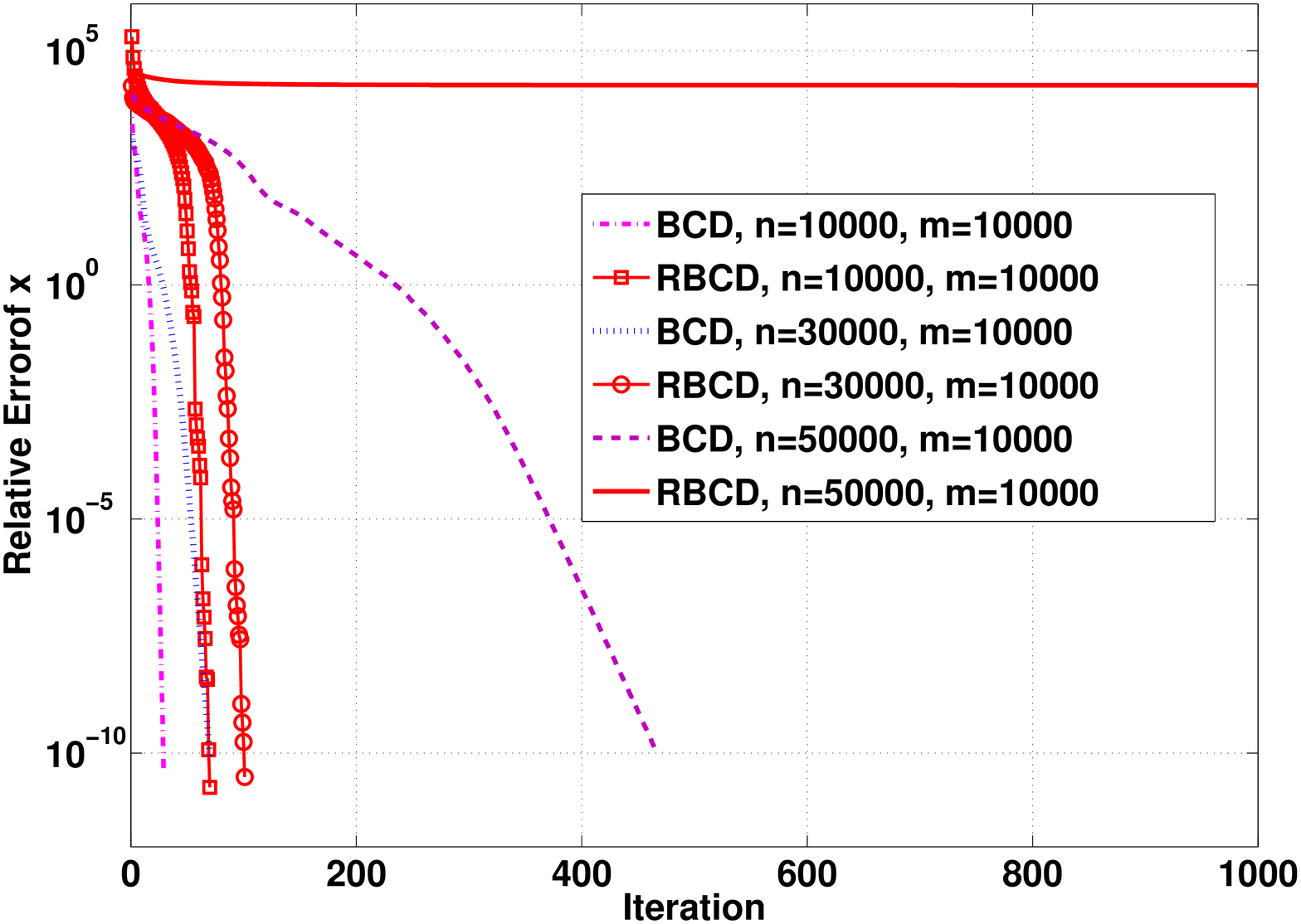}
\caption{Relative error performance for R-BCD and BCD . $p_A=0.01$, $p_b=0.016$, $n=\{10000, 30000, 50000\}$, $m=10000$.}\label{figLASSO2}}
\end{minipage}
 \end{figure}

\subsection{The DR Problem}
Let us now test the BSUM-M on the DR problem described in \eqref{eq:DREquiv}. Suppose that there are up to $3000$ users in the system with each user having $4$ controllable appliances; also assume that each day is divided into $96$ time periods. That is, $m=96$ and $n_k=96\times 4$. The load model is generated according to \cite{Paatero06}, and the detailed construction of the matrices $\{\boldsymbol{\Psi}_k\}_{k=1}^{K}$ can be found in \cite{Chang12}. For simplicity, we assume that the day-ahead bidding is completed, with power supply $\boldsymbol{p}$ determined by an average of $5$ random generation of all the uncontrolled consumptions of the users. This reduces problem \eqref{eq:DREquiv} to having only $\{\boldsymbol{x}_k\}_{k=1}^{K}$ and $\boldsymbol{z}$ as optimization variables. Additionally, we let $C_p(\cdot)$ and $C_s(\cdot)$ take the form of quadratic functions.

\begin{figure}[htb]
 \centering
 \vspace{-0.45cm}
\includegraphics[width=0.85\linewidth]{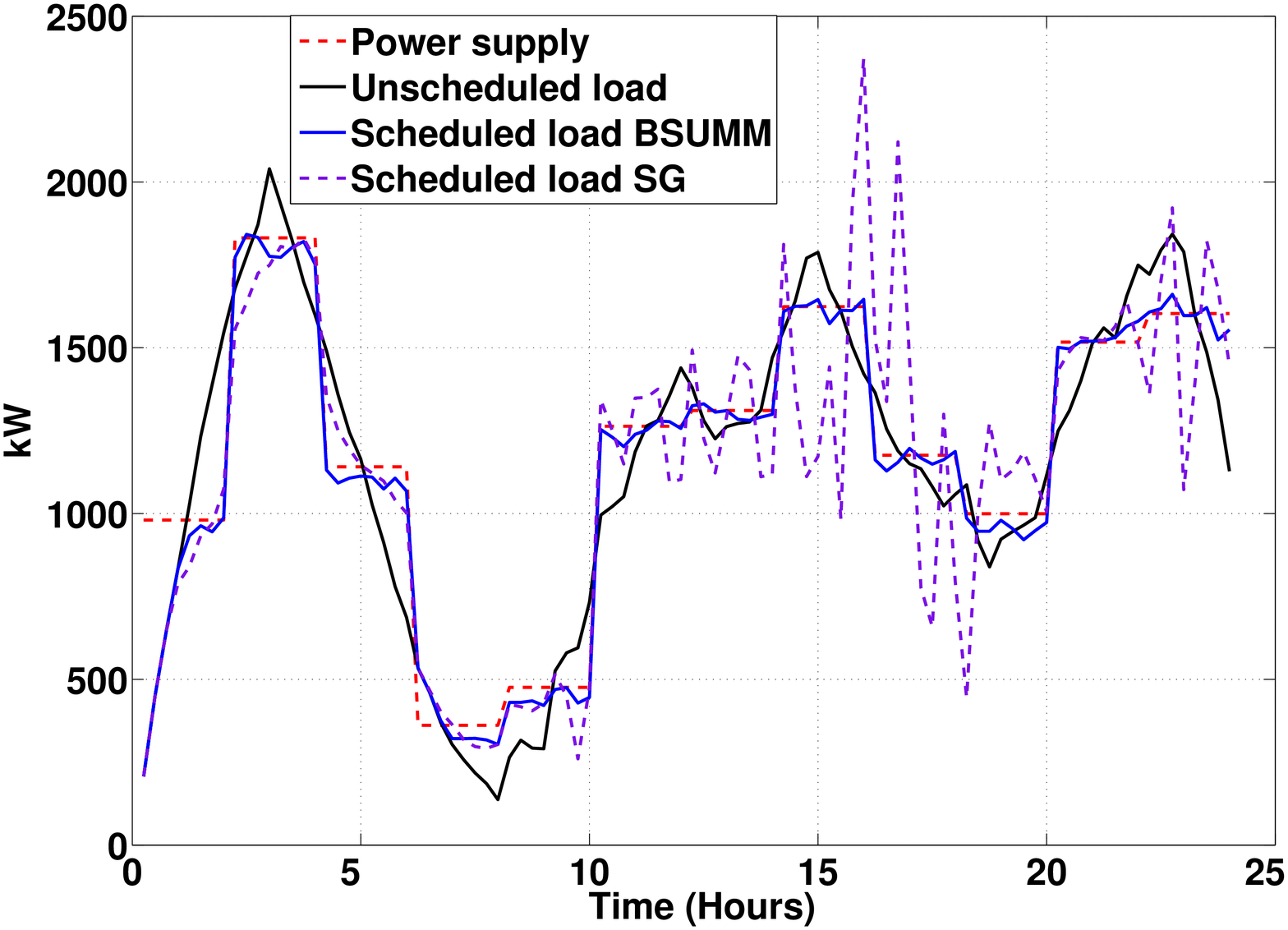}
 \vspace{-0.3cm}
\caption{\footnotesize The unscheduled consumption, power supply and the scheduled consumption by the BSUM-M and the subgradient algorithm.}\label{fig:DR}
 \vspace{-0.1cm}
\end{figure}

We compare the BSUM-M with the dual subgradient (SG) algorithm \cite{Chang12}\footnote{Note that here the dual SG is applied to the DR with quadratic costs, whereas the reference \cite{Chang12} employed {\it linear} costs.}. We let both algorithms run $200$ iterations. Note that each iteration of the SG is computationally more expensive, as it involves solving a linear program \cite{Chang12}, while each iteration of the BSUM-M is again in closed-form. In Table \ref{table:DR}, we compare the total costs of the scheduled loading solutions generated by the BSUM-M and the SG with that of unscheduled loads. Clearly the BSUM-M is able to achieve about $50\%$ of cost reduction, while the SG algorithm fails to converge within $200$ iterations which results in significantly larger costs. In Fig.\ \ref{fig:DR}, we plot the power supply, the consumption levels of unscheduled loads as well as those scheduled by the BSUM-M and the SG. We can see that the BSUM-M can track the supply curve quite well, while the SG fails to do so within $200$ iterations.

\begin{table}
\centering{\small\vspace{-0.1cm}
\begin{tabular}{|c|c|c|c|c|c|}
  \hline
  {\bf Algorithm}& $\boldsymbol{K=50}$& $\boldsymbol {K=100}$& $\boldsymbol {K=500}$& $\boldsymbol {K=1000} $& $\boldsymbol {K=3000}$\\
  \hline
  {\bf BSUM-M} & 0.4860 & 0.8099 & 3.3964 &4.648 & 14.827\\
  \hline
  {\bf SG} & 0.9519 & 1.5630 & 9.4835 &16.595 & 60.896\\
  \hline
  {\bf Unscheduled} & 1.0404 & 1.7940 & 7.5749 &14.389 & 45.900\\
  \hline
\end{tabular}}
\caption{\footnotesize Total Cost Performance of Different Approaches ($10^3$ unit price).}
\vspace{-0.5cm} \label{table:DR}
\end{table}

\section{Conclusion}
In this paper, we propose a first order primal-dual method for nonsmooth convex minimization problems subject to linear constraints. The new algorithm, which we call the block successive upper-bound minimization method of multipliers (BSUM-M), alternates between simple primal and dual steps either randomly or deterministically, and is well suited for large scale applications involving big data. In the primal steps, certain locally tight upper-bounds of the augmented Lagrangian function are successively minimized, while the dual step is in closed form and follows an approximate dual ascent step. The algorithm is a generalization of the ADMM method and the BCD method in that it offers greater flexibility both in choosing a suitable upper-bound function in place of the augmented Lagrangian function when performing the primal update, and in the order of primal-dual updates. We have established the convergence of the BSUM-M algorithm (for both the deterministic and randomized versions) and have demonstrated their strong numerical performance for large scale realistic applications. In future, it will be interesting to study if the BSUM-M can converge to a local stationary point for nonconvex problems, and if so, how effective it is in practical applications.

\bibliographystyle{IEEEbib}

\bibliography{ref,biblio}
\end{document}